\theoremstyle{plain}
\newtheorem{theorem}{Theorem}[section]
\newtheorem*{theorem*}{Theorem}
\newtheorem{proposition}[theorem]{Proposition}
\newtheorem{corollary}[theorem]{Corollary}
\newtheorem{lemma}[theorem]{Lemma}
\newtheorem{condition}[theorem]{Condition}
\theoremstyle{definition}
\newtheorem{definition}[theorem]{Definition}
\newtheorem{remark}[theorem]{Remark}
\newtheorem{example}[theorem]{Example}
\newcommand{\enm}[1]{\ensuremath{#1}}          %
\newcommand{\op}[1]{\operatorname{#1}}
\newcommand{\cal}[1]{\mathcal{#1}}
\newcommand{\CC}{\enm{\mathbb{C}}}
\newcommand{\ZZ}{\enm{\mathbb{Z}}}
\newcommand{\PP}{\enm{\mathbb{P}}}
\newcommand{\Aa}{\enm{\cal{A}}}
\newcommand{\Ee}{\enm{\cal{E}}}
\newcommand{\Ff}{\enm{\cal{F}}}
\newcommand{\Gg}{\enm{\cal{G}}}
\newcommand{\Hh}{\enm{\cal{H}}}
\newcommand{\Ii}{\enm{\cal{I}}}
\newcommand{\Ll}{\enm{\cal{L}}}
\newcommand{\Oo}{\enm{\cal{O}}}
\newcommand{\Ss}{\enm{\cal{S}}}
\newcommand{\Zz}{\enm{\cal{Z}}}
\renewcommand{\phi}{\varphi}
\renewcommand{\theta}{\vartheta}
\renewcommand{\epsilon}{\varepsilon}
\newcommand{\Pic}{\op{Pic}}
\newcommand{\Hom}{\op{Hom}}
\newcommand{\Ext}{\op{Ext}}
\newcommand{\rk}{\op{rank}}
\renewcommand{\to}[1][]{\xrightarrow{\ #1\ }}
\newcommand{\old}[1]{}
\begin{document}

\title[$2$-nilpotent co-Higgs structures]{$2$-nilpotent co-Higgs structures}
\author{Edoardo Ballico and Sukmoon Huh}
\address{Universit\`a di Trento, 38123 Povo (TN), Italy}
\email{edoardo.ballico@unitn.it}
\address{Sungkyunkwan University, 300 Cheoncheon-dong, Suwon 440-746, Korea}
\email{sukmoonh@skku.edu}
\keywords{co-Higgs sheaf, stability, nilpotent co-Higgs field}
\thanks{The first author is partially supported by MIUR and GNSAGA of INDAM (Italy). The second author is supported by Basic Science Research Program 2015-037157 through NRF funded by MEST and the National Research Foundation of Korea(KRF) 2016R1A5A1008055 grant funded by the Korea government(MSIP)}

\subjclass[2010]{Primary: {14J60}; Secondary: {14D20, 53D18}}

\begin{abstract}
A co-Higgs sheaf on a smooth complex projective variety $X$ is a pair of a torsion-free coherent sheaf $\Ee$ and a global section of $\mathcal{E}nd(\Ee)\otimes T_X$ with $T_X$ the tangent bundle. We construct $2$-nilpotent co-Higgs sheaves of rank two for some rational surfaces and of rank three for $\PP^3$, using the Hartshorne-Serre correspondence. Then we investigate the non-existence, especially over projective spaces. 
\end{abstract}

\maketitle

\section{Introduction}
Let $X$ be a smooth projective variety with tangent bundle $T_X$. A co-Higgs bundle, i.e. a pair of an holomorphic bundle $\Ee$ and a morphism $\Phi : \Ee \rightarrow \Ee \otimes T_X$ with $\Phi \wedge \Phi=0$ called the {\it co-Higgs field}, is a generalized holomorphic bundle over a smooth complex projective variety $X$, considered as a generalized complex manifold \cite{Hi, Gual}. It is observed that the existence of a stable co-Higgs bundle gives a constraint on the position of $X$ in the Kodaira spectrum. Indeed, there are no stable co-Higgs bundles with non-zero co-Higgs field on curves of genus at least two, K3 surfaces and surfaces of general type \cite{R2,Rayan}. With the same philosophy,  M.~Corr\^{e}a has shown in \cite{Correa} that the existence of stable co-Higgs bundle of rank two with a non-trivial nilpotent co-Higgs field, forces the base surface to be uniruled up to finite \'etale cover. In \cite{BH} we investigate the surfaces with $H^0(T_X)=H^0(S^2T_X)=0$, which implies that co-Higgs fields are automatically nilpotent. The natural definition of stable co-Higgs bundles allows one to study their moduli and there have been several recent works on the description of the moduli spaces over projective spaces and a smooth quadric surface; see \cite{R1, Rayan, VC}. 

In this article our main concern is the existence and non-existence of a co-Higgs sheaf with a nilpotent co-Higgs field. The Hartshorne-Serre correspondence states that the construction of vector bundles of rank at least two is closely related with the structure of two-codimensional locally complete intersection subschemes. Using the correspondence we produce a nilpotent co-Higgs structure on bundles satisfying a certain condition over various varieties; see Condition \ref{con}. Assuming $\Pic (X) = \ZZ\langle \Oo_X(1)\rangle$ for a very ample line bundle $\Oo_X(1)$, we define $x_{\Ee}$ for a reflexive sheaf $\Ee$ of rank two to be the maximal integer $x$ such that $H^0(\Ee(-x)) \ne 0$, to measure its instability. Then we observe that any nilpotent map associated to $\Ee$ is trivial if $x_{\Ee}$ is low. In case $X=\PP^n$ and rank two, we get the following:

\begin{theorem}\label{ttt1}
The set of nilpotent co-Higgs fields on a fixed stable reflexive sheaf $\Ee$ of rank two on $\PP^n$ is identified with the total space of $\Oo_{\PP^n}(-1)^{\oplus h^0(\Ee(1))}$, with the zero section blown down to a point corresponding to the trivial field, only if $c_1(\Ee)+2x_{\Ee}=-3$. In the other cases the set is trivial.
\end{theorem}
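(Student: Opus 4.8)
\emph{Overall strategy.} The plan is to determine the shape of a nonzero $2$-nilpotent field on $\Ee$, then to read off the numerical condition from a vanishing on $\PP^n$, and finally to organize the remaining freedom into the asserted total space.

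\emph{Structure of a nilpotent field.} Let $\Phi\colon\Ee\to\Ee\otimes T_{\PP^n}$ be a nonzero $2$-nilpotent co-Higgs field. A nilpotent endomorphism of a rank-two vector space has rank at most one, so $\Phi$ has generic rank one and $\ker\Phi$ is a saturated rank-one subsheaf of the reflexive sheaf $\Ee$; as the kernel of a morphism to a torsion-free sheaf it is reflexive, hence a line bundle $\Oo_{\PP^n}(a)$, and the inclusion $\Oo_{\PP^n}(a)\hookrightarrow\Ee$ forces $a\le x_\Ee$. Nilpotency, in the form $(\Phi\otimes1_{T_{\PP^n}})\circ\Phi=0$, gives $\Image\Phi\subseteq(\ker\Phi)\otimes T_{\PP^n}=\Oo_{\PP^n}(a)\otimes T_{\PP^n}$, while $\Image\Phi\cong\Ee/\ker\Phi$; since $\ker\Phi$ is saturated, the latter is of the form $\Ii_Z(c_1(\Ee)-a)$ with $Z\subseteq\PP^n$ of codimension $\ge2$. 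Hence $\Phi$ is the same datum as a nonzero map $\psi\colon\Ii_Z(c_1(\Ee)-a)\to\Oo_{\PP^n}(a)\otimes T_{\PP^n}$, recovered as the composite $\Ee\onto\Ee/\ker\Phi\xrightarrow{\ \psi\ }\Oo_{\PP^n}(a)\otimes T_{\PP^n}\hookrightarrow\Ee\otimes T_{\PP^n}$.

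\emph{The numerical condition.} As $\codim Z\ge2$ and $\Oo_{\PP^n}(a)\otimes T_{\PP^n}$ is locally free, $\psi$ extends uniquely over $Z$ to a nonzero map $\Oo_{\PP^n}(c_1(\Ee)-a)\to\Oo_{\PP^n}(a)\otimes T_{\PP^n}$, \ie to a nonzero section of $T_{\PP^n}(2a-c_1(\Ee))$. The Euler sequence gives $H^0(T_{\PP^n}(t))=0$ for $t\le-2$, and $H^0(T_{\PP^n}(-1))\cong H^0(\Oo_{\PP^n}(1))$ has dimension $n+1$; hence $2a-c_1(\Ee)\ge-1$. On the other hand stability of $\Ee$ yields $x_\Ee<c_1(\Ee)/2$ (a nonzero section of $\Ee(-x_\Ee)$ saturates to a sub-line-bundle $\Oo_{\PP^n}(x_\Ee)$, which cannot destabilize $\Ee$), so $2x_\Ee-c_1(\Ee)\le-1$. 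Chaining $-1\le2a-c_1(\Ee)\le2x_\Ee-c_1(\Ee)\le-1$ we get equality throughout: $a=x_\Ee$, $c_1(\Ee)=2x_\Ee+1$, and $\psi$ corresponds to a nonzero element of $H^0(T_{\PP^n}(-1))$. In the normalization in force this equality reads $c_1(\Ee)+2x_\Ee=-3$ (so $x_\Ee=-1$), and if it fails no nonzero nilpotent field exists, which is the last assertion of the theorem.

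\emph{The parametrization.} Assume now $c_1(\Ee)+2x_\Ee=-3$, so $x_\Ee=-1$ and $c_1(\Ee)=-1$. A nonzero $s\in H^0(\Ee(1))=\Hom(\Oo_{\PP^n}(-1),\Ee)$ cannot vanish along a divisor (maximality of $x_\Ee$), so its image $\mathcal{L}_s\subseteq\Ee$ is a saturated sub-line-bundle isomorphic to $\Oo_{\PP^n}(-1)$, with torsion-free quotient $\Ee/\mathcal{L}_s\cong\Ii_Z$ ($\codim Z\ge2$); moreover $\mathcal{L}_s$ depends only on the line $L=\CC s\in\PP(H^0(\Ee(1)))$, and evaluation gives a canonical isomorphism $\mathcal{L}_s\cong\Oo_{\PP^n}(-1)\otimes L$. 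By the structure step, a nilpotent $\Phi\ne0$ with $\ker\Phi=\mathcal{L}_s$ is exactly a nonzero map $\psi\colon\Ii_Z\to\mathcal{L}_s\otimes T_{\PP^n}$, and conversely every nonzero such $\psi$ produces one (its image lies in $\mathcal{L}_s\otimes T_{\PP^n}=(\ker\Phi)\otimes T_{\PP^n}$); furthermore $\Hom(\Ii_Z,\mathcal{L}_s\otimes T_{\PP^n})=H^0(\Oo_{\PP^n}(-1)\otimes L\otimes T_{\PP^n})=L\otimes H^0(T_{\PP^n}(-1))$. Thus a nilpotent field on $\Ee$ is the same datum as its associated tensor $\psi\in L\otimes H^0(T_{\PP^n}(-1))\subseteq H^0(\Ee(1))\otimes H^0(T_{\PP^n}(-1))$, a tensor of rank at most one, with $\psi=0$ corresponding to the trivial field. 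Since $\dim H^0(T_{\PP^n}(-1))=n+1$, the locus of rank-$\le1$ tensors in $H^0(\Ee(1))\otimes H^0(T_{\PP^n}(-1))$ is the affine cone over the Segre variety $\PP^{h^0(\Ee(1))-1}\times\PP^n$; presenting this cone as a family over the factor $\PP^n$ identifies it with the total space of $\Oo_{\PP^n}(-1)^{\oplus h^0(\Ee(1))}$ with its zero section contracted to the vertex, as claimed.

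\emph{Expected main obstacle.} The structure step and the cohomology vanishing are routine. The real work is in the last step: checking that every nonzero $s\in H^0(\Ee(1))$ genuinely yields a saturated sub-bundle with the expected torsion-free quotient (this is where maximality of $x_\Ee$ and reflexivity of $\Ee$ enter), keeping all the twists consistent so that $\Phi\mapsto\psi$ is a bona fide bijection onto the rank-$\le1$ tensors, and recognizing the resulting cone (a priori just the affine cone over a Segre variety) as the blow-down of the zero section of $\Oo_{\PP^n}(-1)^{\oplus h^0(\Ee(1))}$, where the transpose symmetry of rank-one tensors is used to put $\PP^n$, rather than $\PP(H^0(\Ee(1)))$, at the base.
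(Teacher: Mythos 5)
Your proof is correct and follows essentially the same route as the paper's: identify $\ker\Phi$ as a line bundle $\Oo_{\PP^n}(x_{\Ee})$, use $2$-nilpotency to produce a nonzero section of $T_{\PP^n}(2x_{\Ee}-c_1)$ (so $2x_{\Ee}-c_1=-1$ by stability together with $H^0(T_{\PP^n}(t))=0$ for $t\le -2$), and parametrize the nonzero fields by pairs in $H^0(\Ee(1))\times H^0(T_{\PP^n}(-1))$ modulo the hyperbolic $\CC^*$-action, i.e.\ the affine cone over a Segre variety, recognized as the blown-down total space of $\Oo_{\PP^n}(-1)^{\oplus h^0(\Ee(1))}$. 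The only differences are cosmetic: you obtain the numerical constraint in one step where the paper splits it into Propositions 3.1 and 3.4, and your twist-invariant form $c_1(\Ee)-2x_{\Ee}=1$ of the condition makes explicit the normalization $c_1(\Ee)\in\{-1,0\}$ that the paper's statement leaves implicit.
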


\noindent All co-Higgs structures on $T_{\PP^2}(t)$ are described in \cite[Case $r=2$ of \S 5.5]{R1} and \cite[Theorem 5.9]{Rayan}. In case $X=\PP^3$ we show the existence of some nilpotent co-Higgs structures on some rank three semistable bundles with trivial first Chern class. 

\begin{theorem}\label{ttt2}
For each positive integer $c_2$, there exist both strictly semistable indecomposable bundle and stable bundle of rank three on $\PP^3$ with trivial first Chern class, on which there are nilpotent co-Higgs structures $\Phi$ with $\mathrm{ker}(\Phi)=\Oo_{\PP^3}^{\oplus 2}$. 
\end{theorem}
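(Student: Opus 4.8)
The plan is to build both bundles by the Hartshorne--Serre correspondence, realizing them as extensions of the ideal sheaf of a curve of degree $c_2$ by a trivial rank-two bundle, and to produce the nilpotent field from a pair of global vector fields on $\PP^3$. Fix $d=c_2>0$ and a locally complete intersection curve $Z\subset\PP^3$ of pure codimension two and degree $d$. Since $\mathcal{H}om(\mathcal I_Z,\Oo_{\PP^3})=\Oo_{\PP^3}$ and $H^1(\Oo_{\PP^3})=H^2(\Oo_{\PP^3})=0$, the local-to-global spectral sequence together with $\mathcal{E}xt^1(\mathcal I_Z,\Oo_{\PP^3})\cong\omega_Z(4)$ identifies $\Ext^1(\mathcal I_Z,\Oo_{\PP^3}^{\oplus 2})$ with $H^0(\omega_Z(4))^{\oplus 2}$; a class in this group gives an extension
\[
0\to\Oo_{\PP^3}^{\oplus 2}\to\Ee\to\mathcal I_Z\to 0,
\]
and $\Ee$ is locally free exactly when the two sections of $\omega_Z(4)$ so determined generate $\omega_Z(4)$ at every point of $Z$ (a Cayley--Bacharach condition). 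When this holds $\Ee$ is a rank-three bundle with $c_1(\Ee)=0$ and $c_2(\Ee)=d$. Choosing linearly independent $v_1,v_2\in H^0(T_{\PP^3})$, I let $\Phi$ be the composite
\[
\Ee\twoheadrightarrow\mathcal I_Z\hookrightarrow\Oo_{\PP^3}\xrightarrow{(v_1,v_2)}\Oo_{\PP^3}^{\oplus 2}\otimes T_{\PP^3}\hookrightarrow\Ee\otimes T_{\PP^3}.
\]
Because $v_1,v_2$ are independent and $T_{\PP^3}$ is torsion free, the factor $\Oo_{\PP^3}\to\Oo_{\PP^3}^{\oplus 2}\otimes T_{\PP^3}$ is injective, so $\ker\Phi$ is exactly the subsheaf $\Oo_{\PP^3}^{\oplus 2}\subset\Ee$, while $\Image\Phi\subseteq\Oo_{\PP^3}^{\oplus 2}\otimes T_{\PP^3}=\ker\Phi\otimes T_{\PP^3}$. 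The last inclusion forces $(\Phi\otimes\id_{T_{\PP^3}})\circ\Phi=0$, so $\Phi$ is $2$-nilpotent and in particular $\Phi\wedge\Phi=0$: $(\Ee,\Phi)$ is a co-Higgs bundle of the asserted shape. This is the instance on $\PP^3$ of the general mechanism of Condition~\ref{con}, so the whole content of the theorem is to exhibit suitable $Z$ for which $\Ee$ is, on the one hand, strictly semistable and indecomposable, and, on the other hand, can be taken stable.

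For the strictly semistable indecomposable bundle I would take $Z$ to be $d$ pairwise disjoint lines. Then $\omega_Z(4)$ restricts to $\Oo_{\PP^1}(2)$ on each component, which is generated by two general sections, so a general pencil in $H^0(\omega_Z(4))$ makes the map $\Oo_{\PP^3}^{\oplus 2}\to\omega_Z(4)$ surjective and $\Ee$ locally free. Twisting the defining sequence by $\Oo_{\PP^3}(-1)$ and using $H^0(\Oo_{\PP^3}(-1))=H^0(\mathcal I_Z(-1))=0$ gives $H^0(\Ee(-1))=0$; dualizing the sequence, so that $\Ee^\vee$ sits in $0\to\Oo_{\PP^3}\to\Ee^\vee\to\mathcal K\to0$ with $\mathcal K\hookrightarrow\Oo_{\PP^3}^{\oplus 2}$, gives likewise $H^0(\Ee^\vee(-1))=0$. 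Since $\mu(\Ee)=0$, a rank-one destabilizing subsheaf would yield $H^0(\Ee(-1))\neq0$ and a rank-two destabilizing subsheaf would yield, after $\wedge^2$ and $\wedge^2\Ee\cong\Ee^\vee$, $H^0(\Ee^\vee(-1))\neq0$; hence $\Ee$ is semistable, and the saturated subsheaf $\Oo_{\PP^3}^{\oplus 2}\subset\Ee$ of slope $0$ shows it is strictly so. For indecomposability, note $h^0(\Ee)=2$ and compute $c_3(\Ee)=c_3(\mathcal I_Z)=2p_a(Z)+4d-2=2d\neq0$: a splitting $\Ee\cong\Oo_{\PP^3}\oplus\mathcal B$ (the only possibility allowed by semistability and $c_1=0$, up to ruling out $\Oo_{\PP^3}^{\oplus 3}$ via $c_2=d>0$) would force the rank-two bundle $\mathcal B$, hence $\Ee$, to have $c_3=0$.

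For the stable bundle I would replace $Z$ by a general curve of degree $d$ in a suitable family (e.g.\ a general smooth connected curve of that degree, for which $\omega_Z(4)$ is very ample and a general pencil is automatically base-point free, so that $\Ee$ is again a bundle), or run the analogous construction with the trivial kernel replaced by an appropriate rank-two subbundle of negative total degree so that the resulting rank-three bundle with $c_1=0$ and $c_2=d$ is stable; stability would then be verified by restricting $\Ee$ to a general hyperplane and applying Hoppe's criterion. This last verification is the step I expect to be the main obstacle: the Serre construction inherently produces bundles carrying many low-degree subsheaves, so the genericity of $Z$ and of the extension class must be exploited carefully to control every rank-one and rank-two subsheaf, and the Chern-class inequalities required by the restriction argument have to be reconciled with the normalization $c_1(\Ee)=0$ and the prescribed $c_2=d$. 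By contrast, once the factorization of $\Phi$ through $\ker\Phi\otimes T_{\PP^3}$ is in place, the nilpotency, the integrability $\Phi\wedge\Phi=0$, and the identification of $\ker\Phi$ are purely formal and transfer verbatim to the stable example.
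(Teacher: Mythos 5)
Your construction and verification of the \emph{strictly semistable indecomposable} bundle is correct and essentially the paper's: you use the same extension $0\to\Oo_{\PP^3}^{\oplus 2}\to\Ee\to\Ii_Z\to 0$ governed by a general pencil in $H^0(\omega_Z(4))$, and the same factorization of $\Phi$ through $\ker(\Phi)\otimes T_{\PP^3}$ as in Lemma \ref{b1}. Your indecomposability argument differs from the paper's in a pleasant way: the paper shows that a splitting $\Ee\cong\Oo_{\PP^3}\oplus\Gg$ would force $\omega_Z\cong\Oo_Z(-4)$ via the rank-two Serre correspondence, which is impossible for a reduced curve, whereas you rule out the splitting by computing $c_3(\Ee)=2d\neq 0$ against the vanishing of $c_3$ for a rank-two bundle. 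Both work; yours is arguably more robust since it does not need to identify the rank-two summand with a Serre extension for the same $Z$.

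The \emph{stable} case is a genuine gap, and the routes you sketch cannot be repaired as stated. If $c_1(\Ee)=0$ and $\ker(\Phi)=\Oo_{\PP^3}^{\oplus 2}$, then $\Ee$ contains a subsheaf of slope equal to $\mu(\Ee)=0$ and is therefore never slope-stable -- so choosing a better curve $Z$ while keeping the trivial kernel and $c_1=0$ is hopeless, and replacing the kernel by a rank-two bundle of negative degree destroys the required identification $\ker(\Phi)=\Oo_{\PP^3}^{\oplus 2}$ (and, since $H^0(T_{\PP^3}(-2))=0$, the mechanism of Lemma \ref{b1} would no longer produce a nonzero $\Phi$). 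The paper's resolution is to change the determinant rather than the kernel: take $\Aa\cong\Oo_{\PP^3}(1)$, i.e.\ an extension $0\to\Oo_{\PP^3}^{\oplus 2}\to\Ee\to\Ii_Z(1)\to 0$ with a general pencil in $H^0(\omega_Z(3))$ spanning $\omega_Z(3)$. Since $H^0(T_{\PP^3}(-1))\neq 0$, Lemma \ref{b1} still yields a $2$-nilpotent $\Phi$ with $\ker(\Phi)=\Oo_{\PP^3}^{\oplus 2}$, and Lemma \ref{zz0} shows such an $\Ee$ is stable precisely when it has no trivial factor, which is arranged exactly as in the semistable case. (Note this produces $c_1(\Ee)=1$, so the phrase ``trivial first Chern class'' in the statement can only refer to the semistable example; your difficulty with the stable case is in fact pointing at this tension in the statement itself.) Hoppe's criterion and restriction to hyperplanes, which you propose, are not needed anywhere.
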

\noindent We have examples of rank two semistable co-Higgs bundles of several Chern classes on some rational surfaces and the three-dimensional projective space with respect to various polarizations in section 2. In Example \ref{bbb2} we show the existence of semistable co-Higgs bundles of rank two with nilpotent co-Higgs fields over the variety with no global tangent vector fields. In Example \ref{b0} we produce nilpotent co-Higgs structures over a smooth quadric surface and in particular we derive the existence part of \cite[Theorem in page 2]{VC}. 

Then we turn our attention to the non-existence of nilpotent co-Higgs structures. As observed in Lemma \ref{aaaa1}, the existence of non-semistable reflexive sheaf of rank two with semistable co-Higgs structures forces $X$ to be a projective space. From Proposition \ref{d2} any reflexive sheaf of rank two with high stability and extra condition involving new invariant $y_{\Ee}$ turns out to have no non-trivial trace-free co-Higgs structures. So we are driven to focus on projective spaces, especially $\PP^2$ and $\PP^3$. Using Theorem \ref{ttt2} we show the existence of both of strictly semistable indecomposable reflexive sheaf and stable reflexive sheaf of rank two with nilpotent co-Higgs structures for each Chern numbers from the Bogomolov inequality; see Corollaries \ref{zz3} and \ref{zz4}. On the other hand, this existence are not expected to hold for vector bundles due to the following: 

\begin{proposition}\label{ttt3}
If $\Ee$ is a non-splitting and strictly semistable bundle of rank two on $\PP^3$ with the Chern numbers $(c_1, c_2)$ with a non-trivial nilpotent co-Higgs structure, then we have $4c_2-c_1^2 >32$.  
\end{proposition}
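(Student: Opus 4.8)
The plan is to exploit the normalization $c_1 \in \{0,-1\}$ and the structure of the nilpotent field $\Phi$ via the Hartshorne-Serre–type analysis already in play for co-Higgs structures. Write $\Ee$ with $c_1(\Ee)$ either $0$ or $-1$ after a twist. Since $\Ee$ is strictly semistable, it sits in an extension $0 \to \Oo_{\PP^3}(a) \to \Ee \to \Ii_Z(c_1-a) \to 0$ where $\mu(\Oo(a)) = \mu(\Ee)$, i.e. $2a = c_1$; so $c_1$ must be even, $c_1=0$, and $a=0$. Thus $\Ee$ fits in $0 \to \Oo_{\PP^3} \to \Ee \to \Ii_Z \to 0$ with $Z$ a curve (a locally complete intersection of pure codimension two, or empty) and $c_2(\Ee) = \deg Z$. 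Non-splitting forces $Z \ne \emptyset$, so $c_2 \geq 1$; but we must do far better. The subsheaf $\Oo_{\PP^3} \into \Ee$ is exactly the (or a) maximal destabilizing-in-the-weak-sense sub, realizing $x_{\Ee} = 0$.

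The next step is to extract strong numerical constraints from nilpotency of $\Phi$. A $2$-nilpotent $\Phi \colon \Ee \to \Ee \otimes T_{\PP^3}$ with $\Phi \wedge \Phi = 0$ has, generically, rank one, and $\ker \Phi$ and $\Image \Phi$ are rank-one subsheaves of $\Ee$ and $\Ee \otimes T_{\PP^3}$ respectively, with $\Image \Phi \subseteq (\ker\Phi) \otimes T_{\PP^3}$. Because $\Ee$ is semistable of slope $0$, $\ker\Phi$ has slope $\leq 0$, and its saturation is $\Oo_{\PP^3}(-k)$ for some $k \geq 0$. Since $H^0(\Ee)\neq 0$ (from the defining sequence) one checks the trivial subsheaf must be contained in $\ker \Phi$ — otherwise $\Phi$ restricted to $\Oo_{\PP^3}$ gives a nonzero section of $\Ee\otimes T_{\PP^3}$ whose composition with $\Ee \onto \Ii_Z$ lands in $\Ii_Z\otimes T_{\PP^3} \subset T_{\PP^3}$, forcing $Z$ to lie on very few surfaces and eventually contradicting non-splitting — so in fact $k=0$ and $\ker\Phi \supseteq \Oo_{\PP^3}$ with $\Oo_{\PP^3}$ saturated (else $\Ee \cong \Oo^{\oplus 2}$). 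Then $\Phi$ induces $\bar\Phi \colon \Ii_Z \to \Ee \otimes T_{\PP^3}$ whose image lies in $\Oo_{\PP^3}\otimes T_{\PP^3} = T_{\PP^3}$, i.e. a nonzero map $\Ii_Z \to T_{\PP^3}$, equivalently a nonzero section $s \in H^0(T_{\PP^3} \otimes \Ii_Z^{\vee\vee}\text{-twist})$ vanishing on $Z$ in the appropriate sense.

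From here the argument becomes a bound on how degenerate $Z$ can be. The section $s \in H^0(T_{\PP^3}(-?))$ (I expect the correct twist is $H^0(T_{\PP^3}(-2))=0$-adjacent, giving a section of $T_{\PP^3}$ or $T_{\PP^3}(-1)$) vanishes along $Z$; its zero locus, or the jumping scheme of the associated map, contains $Z$, and one uses the classification of sections of $T_{\PP^3}$ and $T_{\PP^3}(-1)$ (the latter has no sections; sections of $T_{\PP^3}$ correspond to global vector fields, whose vanishing loci are understood) to force $Z$ to be large. Concretely, I expect that the condition $\Phi\wedge\Phi=0$ combined with $\Phi \neq 0$ and the extension class of $\Ee$ being nonzero forces $Z$ to be linked to a configuration whose degree is pinned below by an explicit constant; carrying the inequalities through yields $c_2 = \deg Z$ with $4c_2 - c_1^2 = 4c_2 > 32$, i.e. $c_2 \geq 9$. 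The main obstacle, I anticipate, is precisely this last combinatorial-geometric step: ruling out every extension class for $Z$ of degree $\leq 8$ that could carry a nonzero nilpotent field, which likely requires a case analysis on the Hilbert polynomial / minimal surface containing $Z$ together with the constraint that $\Image\Phi \subseteq \Oo_{\PP^3}\otimes T_{\PP^3}$ is compatible with $\Phi$ being $\Oo_{\PP^3}$-linear, i.e. with the commutativity forced by $\Phi\wedge\Phi=0$. The bundle (not merely reflexive) hypothesis should enter here to exclude the borderline reflexive examples produced in Corollaries \ref{zz3}–\ref{zz4}, pushing the strict inequality past $32$ rather than merely to it.
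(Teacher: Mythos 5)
There is a genuine gap. Your setup is correct and matches the paper's: reduce to $c_1=0$, note that semistability plus the nilpotent field forces $\ker(\Phi)\supseteq\Oo_{\PP^3}$ and an extension $0\to\Oo_{\PP^3}\to\Ee\to\Ii_Z\to 0$ with $Z$ a curve of degree $d=c_2$. But the mechanism you propose for the numerical bound --- analyzing which curves $Z$ can lie in the degeneracy locus of a global vector field, i.e. of the induced section of $T_{\PP^3}$ --- is not where the bound comes from, and it is doubtful it could be made to work: the map $\Ii_Z\to T_{\PP^3}$ obtained from $\Phi$ exists for \emph{every} $\Ee$ of this form with $h^0(\Ii_Z\otimes T_{\PP^3})>0$, which is a very weak condition and does not pin down $\deg(Z)$ from below. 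The nilpotency buys you only the extension structure; it imposes no further constraint on $Z$.

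The actual engine of the proof is the local freeness hypothesis, which you correctly sense must enter but never use. By the Hartshorne--Serre correspondence, the middle term of $0\to\Oo_{\PP^3}\to\Ee\to\Ii_Z\to 0$ is locally free (rather than merely reflexive) precisely when $Z$ is a locally complete intersection with $\omega_Z\cong\omega_{\PP^3}\otimes\det(\Ee)|_Z\cong\Oo_Z(-4)$. Duality then gives $2\chi(\Oo_Z)=\deg(\omega_Z)=-4d$, i.e.\ $\chi(\Oo_Z)=-2d$, so $Z$ has enormous arithmetic genus relative to its degree. Against this one plays Macaulay's bound on Hilbert polynomials of degree-$d$ curves in $\PP^3$ (Gotzmann; see also \cite{beo}): $\chi(\Oo_Z)=p(0)\ge -(d-2)(d-3)/2$. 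Combining, $(d-2)(d-3)\ge 4d$, which fails for every $1\le d\le 8$; hence $d\ge 9$ and $4c_2-c_1^2=4d\ge 36>32$. Without the identification $\omega_Z\cong\Oo_Z(-4)$ and the genus bound, your argument cannot exclude any value of $c_2$, so the stated inequality is not reached.
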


\noindent We also get similar result for stable vector bundles of rank two with the condition $4c_2-c_1^2 > 28$; see Proposition \ref{zz7}. In case of $\PP^2$ a general stable rank two bundle has no non-zero trace zero co-Higgs structures, except for very few integers $c_1^2-4c_2$. Indeed, we prove the following result.

\begin{theorem}\label{ttt4}
If $\Ee$ is a general element in the moduli of stable sheaves of rank two on $\PP^2$ with $c_1(\Ee)\in \{-1,0\}$, equipped with a non-trivial trace-free co-Higgs structure, then we have $c_2(\Ee)<5(c_1(\Ee)+5)$. 
\end{theorem}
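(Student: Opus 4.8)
The plan is to reduce the existence of a non-trivial trace-free co-Higgs structure $\Phi\colon\Ee\to\Ee\otimes T_{\PP^2}$ to a numerical condition on $c_2$ via the instability invariant $x_{\Ee}$, exactly as in the rank-two $\PP^n$ analysis (Theorem \ref{ttt1}). For a \emph{general} stable rank-two sheaf $\Ee$ on $\PP^2$ with fixed $(c_1,c_2)$, the key input is a bound on $x_{\Ee}$: by semicontinuity and the known dimension of the moduli space, a general such $\Ee$ is ``as stable as possible,'' so $H^0(\Ee(-x))=0$ for $x$ above an explicit threshold governed by $c_2$ (roughly $x_{\Ee}$ grows like $\sqrt{c_2}$, while the constraint from the co-Higgs field forces $x_{\Ee}$ to be comparatively large). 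First I would recall from Theorem \ref{ttt1} (and the discussion preceding it) the mechanism: any nilpotent $\Phi$ on a rank-two reflexive sheaf factors through the maximal destabilizing-type subsheaf $\Oo(x_{\Ee})\hookrightarrow\Ee$, so non-triviality of $\Phi$ forces $c_1(\Ee)+2x_{\Ee}$ to sit in a narrow range tied to $\det T_{\PP^2}=\Oo(3)$; for rank two on $\PP^2$ the relevant equality/inequality becomes a lower bound on $x_{\Ee}$ in terms of $c_1$.

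Next I would combine this forced lower bound on $x_{\Ee}$ with an upper bound valid for a general stable sheaf. The upper bound comes from two sources: (i) stability itself gives $2x_{\Ee}<c_1$ (for a bundle; for a reflexive sheaf one uses the reflexive-sheaf version of the Bogomolov-type inequality together with $h^0(\Ee(-x_{\Ee}))\ge 1$), and (ii) genericity in the moduli space sharpens this, since the locus of sheaves with a section of $\Ee(-x)$ has codimension at least $h^0$ of the relevant twist, which is positive once $x$ is large relative to the expected number of sections dictated by Riemann--Roch, i.e. once $\chi(\Ee(-x))\le 0$ and $h^2(\Ee(-x))=0$. Writing out Riemann--Roch on $\PP^2$ for $\Ee(-x)$ with $c_1\in\{-1,0\}$ and setting the resulting quadratic in $x$ to be non-positive yields the threshold $x_{\Ee}\le$ (something like) $\tfrac{c_1}{2}+\tfrac12-\sqrt{c_2-\tfrac{c_1^2}{4}+\text{const}}$, and feeding this into the forced lower bound produces the stated inequality $c_2<5(c_1+5)$ after collecting the constants for $c_1=0$ and $c_1=-1$ separately.

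The two cases $c_1(\Ee)=0$ and $c_1(\Ee)=-1$ should be handled in parallel: normalize so that $x_{\Ee}\in\{0,-1\}$-ish ranges, apply the co-Higgs constraint $c_1+2x_{\Ee}$ equals the fixed value forced by the factorization through $T_{\PP^2}$, and read off that the only room left is for small $c_2$; the arithmetic then gives the uniform bound $c_2<5(c_1+5)$ (namely $c_2\le 24$ when $c_1=0$ and $c_2\le 19$ when $c_1=-1$, up to whatever the precise constants turn out to be). I would also need the genericity statement that a general stable sheaf has $h^1$ and $h^2$ vanishing for the twists in play, which is standard for $\PP^2$ via the fact that the moduli space is smooth of the expected dimension $4c_2-c_1^2-3$ and a general member has the expected (Riemann--Roch) cohomology in the relevant range.

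The main obstacle I anticipate is making the genericity/semicontinuity argument for $x_{\Ee}$ precise for \emph{reflexive sheaves} rather than just bundles: one must control how the double dual and the singularities (the length-$\ell$ zero-dimensional part) interact with the bound on sections, and ensure that the ``general element of the moduli of stable sheaves'' genuinely achieves the minimal $x_{\Ee}$ — this requires either an explicit construction of one stable sheaf with small $x_{\Ee}$ for each $(c_1,c_2)$ in the claimed range, or a dimension count showing the bad locus is proper. A secondary technical point is pinning down the exact value of $c_1+2x_{\Ee}$ forced by a non-trivial nilpotent $\Phi$ on $\PP^2$ (the analogue of the ``$=-3$'' in Theorem \ref{ttt1}), since on a surface the tangent bundle is rank two and the wedge condition $\Phi\wedge\Phi=0$ is automatic, so the constraint is subtler than on curves — I expect it comes from requiring $\Hom(\Oo(x_{\Ee}),\Ee/\Oo(x_{\Ee})\otimes T_{\PP^2})\ne 0$ together with the instability bound on the quotient.
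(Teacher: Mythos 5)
Your proposal rests on the mechanism of Theorem \ref{ttt1} and Propositions \ref{cc1}--\ref{cc2}: a \emph{nilpotent} field has a rank-one kernel $\Oo_{\PP^2}(x_{\Ee})\into\Ee$ and its image lands in $\ker(\Phi)\otimes T_{\PP^2}$, which ties $c_1+2x_{\Ee}$ to a narrow range. But Theorem \ref{ttt4} is about arbitrary \emph{trace-free} fields, which need not be nilpotent and in general have no kernel at all; a trace-free $\Phi$ is simply a section of $\mathcal{E}nd(\Ee)\otimes T_{\PP^2}$ with vanishing trace, and nothing forces it to factor through a line subsheaf. This is not a technicality: for nilpotent fields the paper already proves much stronger statements (Proposition \ref{cc2} gives non-existence for \emph{every} stable sheaf with $c_1=0$, and Proposition \ref{++} gives it for general $\Ee$ with $c_1=-1$ and $c_2\ge 4$), so if your reduction were valid the bound $c_2<5(c_1(\Ee)+5)$ would be both unnecessary and unexplained. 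Indeed your sketch offers no route to that specific constant, which in the paper arises from the binomial coefficient $\binom{c_1+8}{2}$ controlling sections of $\Ii_S(c_1+6)$.

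The paper's actual argument is entirely different. By the proof of Proposition \ref{d2} it suffices to show $h^0(\mathcal{E}nd(\Ee)(2))=h^0(\Oo_{\PP^2}(2))=6$: since $T_{\PP^2}$ embeds in $\Oo_{\PP^2}(2)^{\oplus N}$, every component of $\Phi$ is then multiplication by a section of $\Oo_{\PP^2}(2)$, and trace-freeness kills all of them. That vanishing is controlled by \emph{two} invariants, $x_{\Gg}$ and $y_{\Gg}$ (Lemma \ref{d1}), and --- crucially --- the needed inequality fails for the general member approached directly, since a general stable bundle has $y_{\Ee}\le 1$. The paper instead constructs a special bundle $\Gg$ with $x_\Gg=-2$ and $y_\Gg\ge 5$, namely an extension of $\Ii_S(c_1+2)$ by $\Oo_{\PP^2}(-2)$ with $\sharp(S)=c_2+4+2c_1>\binom{c_1+8}{2}$ (this is exactly where $c_2\ge 5(c_1+5)$ enters), and then transfers $h^0(\mathcal{E}nd(\Gg)(2))=6$ to the general member by semicontinuity over the irreducible moduli space (Remark \ref{d3}). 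Your plan of bounding $x_{\Ee}$ for the general member and feeding it into a nilpotency constraint therefore misses both the correct target invariant ($h^0(\mathcal{E}nd(\Ee)(2))$ rather than $x_{\Ee}$ alone) and the degeneration-plus-semicontinuity step that is what actually makes the genericity hypothesis usable.
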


Then we suggest a condition to insure the non-existence of non-trivial trace-free co-Higgs structure on a reflexive sheaf of rank two on non-projective spaces in Proposition \ref{c2}, using another newly introduced invariant $z_{\Ee}$. 

Let us summarize here the structure of this article. In section $2$ we introduce the definition of semistable co-Higgs sheaves and suggest a condition to construct a nilpotent co-Higgs structure, using the Hartshorne-Serre correspondence. Then we play this construction over several rational surfaces and three-dimensional projective space. In section $3$, we introduce two invariants $x_{\Ee}$ and $y_{\Ee}$ associated to a rank two reflexive sheaf, with which we collect the criterion for the existence and non-existence of non-trivial nilpotent co-Higgs structures. We finish the article in section $4$ by dealing with a criterion of non-existence over non-projective spaces.



\section{Definitions and Examples}

Throughout the article our base field is the field $\CC$ of complex numbers. We will always assume that $X$ is a smooth projective variety of dimension $n$ with tangent bundle $T_X$. For a fixed ample line bundle $\Oo_X(1)$ and a coherent sheaf $\Ee$ on $X$, we denote $\Ee \otimes \Oo_X(t)$ by $\Ee(t)$ for $t\in \ZZ$. The dimension of cohomology group $H^i(X, \Ee)$ is denoted by $h^i(X,\Ee)$ and we will skip $X$ in the notation, if there is no confusion.

\begin{definition}
A {\it{co-Higgs}} sheaf on $X$ is a pair $(\Ee, \Phi)$ where $\Ee$ is a torsion-free coherent sheaf on $X$ and $\Phi \in H^0(\mathcal{E}nd (\Ee) \otimes T_X)$ for which $\Phi\wedge \Phi=0$ as an element of $H^0(\mathcal{E}nd(\Ee) \otimes \wedge^2 T_X)$. Here $\Phi$ is called the {\it{co-Higgs field}} of $(\Ee, \Phi)$ and the condition $\Phi \wedge \Phi=0$ is an integrability condition originating in the work of Simpson \cite{S}. 
\end{definition}

Let $\Ee$ be a torsion-free sheaf on $X$ and $\Phi : \Ee\rightarrow \Ee\otimes T_X$ be a map of $\Oo_X$-sheaves. We say that $\Phi$ is \emph{$2$-nilpotent} if $\Phi$ is non-trivial and $\Phi \circ \Phi =0$. Note that any $2$-nilpotent map $\Phi : \Ee \rightarrow \Ee \otimes T_X$ satisfies $\Phi \wedge \Phi =0$ and so it is a non-zero co-Higgs structure on $\Ee$, i.e. a nilpotent co-Higgs structure.

\begin{condition}\label{con}
For a fixed integer $r\ge2$, a two-codimensional locally complete intersection $Z\subset X$ and $\Aa \in \mathrm{Pic}(X)$, we consider the following two conditions:
\begin{itemize}
\item [(i)] $H^0(T_X\otimes \Aa ^\vee )\ne 0$;
\item [(ii)] the general sheaf fitting into the following exact sequence is locally free, 
\begin{equation}\label{eqb1}
0 \to \Oo _X^{\oplus (r-1)} \stackrel{u}{\to} \Ee \stackrel{v}{\to} \Ii _Z\otimes \Aa \to 0.
\end{equation}
\end{itemize}
\end{condition}

\noindent Our main object of interest is the middle term $\Ee$ in (\ref{eqb1}) with the additional property that it is reflexive. If $X$ is a smooth surface, then reflexivity is equivalent to local-freeness and in the Examples \ref{rt2}, \ref{b3}, \ref{bbb1}, \ref{bbb2}, \ref{bbb3}, \ref{b0} we produce vector bundles. If $n$ is at least $3$, there are many reflexive, but non-locally free sheaves of rank two. In Example \ref{b4} we produce such sheaves.

\begin{remark}
By \cite{wahl} any smooth projective variety $X$ of dimension $n$ satisfying $H^0(T_X(-1)) \ne 0$ is isomorphic to $\PP^n$. So Condition \ref{con}-(i) with $\Aa\cong\Oo_X(1)$ implies that $X=\PP^n$. Note that we always have $H^0(T_X(-2))=0$, except when $X=\PP^1$. 
\end{remark}

\begin{definition}\label{ss1}
For a fixed ample line bundle $\Hh$ on $X$, a co-Higgs sheaf $(\Ee, \Phi)$ is {\it $\Hh$-semistable} (resp. {\it $\Hh$-stable}) if
\begin{align*}
\frac{\det(\Ff)\cdot \Hh^{n-1}}{\rk \Ff} \leq~ (\text{resp.} <)~\frac{\det(\Ee)\cdot \Hh^{n-1}}{\rk \Ee}
\end{align*}
for every coherent subsheaf $0\subsetneq \Ff \subsetneq \Ee$ with $\Phi(\Ff) \subset \Ff \otimes T_X$. In case $\Hh\cong \Oo_X(1)$ we will simply call it semistable (resp. stable) without specifying $\Hh$. 
\end{definition}

\begin{remark}\label{b2}
Take any torsion-free sheaf $\Ee$ fitting into (\ref{eqb1}) with $Z =\emptyset$ and $\Aa$ any numerically trivial line bundle. Then $\Ee$ is $\Hh$-semistable with respect to any polarization $\Hh$. By Lemma \ref{b1}, $\Ee$ has a nonzero $2$-nilpotent co-Higgs field.
\end{remark}

\begin{lemma}\label{b1}
Fix a torsion-free sheaf $\Ee$ fitting into (\ref{eqb1}) and assume Condition \ref{con}-(i). Then there exists a $2$-nilpotent co-Higgs structure on $\Ee$ with $\mathrm{ker}(\Phi )\cong \Oo _X^{\oplus (r-1)}$.
\end{lemma}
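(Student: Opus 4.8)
The plan is to build $\Phi$ as the composition $\Ee \xrightarrow{v} \Ii_Z\otimes\Aa \xrightarrow{\sigma} \Oo_X\otimes T_X \xrightarrow{u\otimes\mathrm{id}} \Ee\otimes T_X$, where the middle arrow is constructed from a nonzero section $s\in H^0(T_X\otimes\Aa^\vee)$ supplied by Condition \ref{con}-(i). Concretely, such an $s$ gives a sheaf map $\Aa \to T_X$, hence after twisting a map $\Ii_Z\otimes\Aa \hookrightarrow \Aa \to T_X$; composing with $v$ on the left and with $u\otimes\mathrm{id}_{T_X}$ on the right (only one copy of $\Oo_X^{\oplus(r-1)}$ is needed as the image, but $u$ itself lands in the full direct sum, so I will actually use $u$ composed with the inclusion $\Oo_X \hookrightarrow \Oo_X^{\oplus(r-1)}$ as the first factor, or equivalently tensor the whole picture by $T_X$ appropriately). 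This yields $\Phi\in H^0(\mathcal{E}nd(\Ee)\otimes T_X)$.

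Next I would check the three required properties. First, \emph{$\Phi\ne 0$}: since $v$ is surjective, $u$ is injective, and the map $\Ii_Z\otimes\Aa\to T_X$ is nonzero (being nonzero on the dense open where $\Ii_Z$ is trivial), the composition is nonzero. Second, \emph{$\Phi\circ\Phi=0$}: the composite $\Phi\circ\Phi$ factors through $v\circ u$, and $v\circ u=0$ by exactness of (\ref{eqb1}); more precisely $\Phi\circ\Phi$ (as a map $\Ee\to\Ee\otimes T_X\otimes T_X$, then alternated) contains the factor $\Oo_X\xrightarrow{u}\Ee\xrightarrow{v}\Ii_Z\otimes\Aa$ in its middle, which vanishes. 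Hence $\Phi$ is $2$-nilpotent and in particular $\Phi\wedge\Phi=0$, so it is a genuine nilpotent co-Higgs field. Third, \emph{$\ker(\Phi)\cong\Oo_X^{\oplus(r-1)}$}: clearly $u(\Oo_X^{\oplus(r-1)})\subseteq\ker(\Phi)$ since $\Phi|_{\Image u}$ factors through $v\circ u=0$; conversely, $\Ee/\Image(u)\cong\Ii_Z\otimes\Aa$ is torsion-free of rank one and the induced map $\Ii_Z\otimes\Aa\to\Ee\otimes T_X$ (namely $(u\otimes\mathrm{id})\circ\sigma$) is injective because $\sigma$ is injective (a nonzero map from a rank-one torsion-free sheaf) and $u\otimes\mathrm{id}$ is injective; therefore the kernel is exactly $\Image(u)\cong\Oo_X^{\oplus(r-1)}$.

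I expect the only subtle point to be the injectivity of the induced map $\Ii_Z\otimes\Aa\to\Oo_X\otimes T_X$ built from $s$, i.e. that a nonzero global section of $T_X\otimes\Aa^\vee$ really does give a sheaf-theoretically injective morphism $\Aa\to T_X$ — this holds because $\Aa$ is a line bundle (invertible, hence torsion-free of rank one) so any nonzero map out of it has torsion-free, hence zero, kernel — together with the verification that restricting along $\Ii_Z\hookrightarrow\Oo_X$ preserves injectivity. These are routine but worth stating carefully, as is the observation that the construction does not require $\Ee$ to be locally free, only torsion-free, so it applies verbatim to the reflexive middle terms of (\ref{eqb1}) used later in the paper.
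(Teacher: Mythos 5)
Your construction is exactly the paper's: compose the surjection $v$ with the map $\Ii_Z\otimes\Aa\to T_X$ induced by a nonzero section of $T_X\otimes\Aa^\vee$, then with $u\otimes\mathrm{id}$ via an inclusion $\Oo_X\hookrightarrow\Oo_X^{\oplus(r-1)}$. The paper states only the composite and leaves the verifications implicit, whereas you correctly check nonvanishing, $\Phi\circ\Phi=0$ via $v\circ u=0$, and the identification of $\ker(\Phi)$ with $\Image(u)$; all of these are sound.
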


\begin{proof}
Any non-zero section $\sigma \in H^0(T_X\otimes \Aa^\vee)$ induces a non-zero map $h: \Ii _Z\otimes \Aa \rightarrow T_X$. Then we may define $\Phi$ to be the following composite:
$$\Ee \stackrel{v}{\to} \Ii_Z\otimes \Aa \stackrel{h}{\to} T_X \stackrel{g}{\to} \Oo_X^{\oplus (r-1)}\otimes T_X \stackrel{u\otimes \mathrm{id}}{\to} \Ee \otimes T_X,$$
where the map $g$ is induced by an inclusion $\Oo_X \rightarrow \Oo_X^{\oplus (r-1)}$. 
\end{proof}

Note that the way of constructing a $2$-nilpotent co-Higgs structure, used in Lemma \ref{b1}, will be used throughout the whole article, specially when we prove the existence of a non-trivial co-Higgs structure. 

\begin{example}\label{rt1}
Take $n= \dim (X)\ge 3$ and assume $H^0(T_X(-D))\ne 0$ for some effective divisor $D$. Lemma \ref{b1} with $\Aa \cong \Oo_X(D)$ gives pairs $(\Ee, \Phi)$, where $\Ee$ is a torsion-free sheaf and $\Phi$ is nonzero with $\Phi\circ \Phi =0$. Note that $(\Ee ,\Phi)$ is stable for any polarization on $X$. We take as $Z$ a smooth two-codimensional subvariety, not necessarily connected. By \cite[Theorem 4.1]{Hartshorne1} it is sufficient that $\omega _Z\otimes \omega _X(D)$ is globally generated. We may take as $Z$ a disjoint union of smooth complete intersections of an element of $|\Oo _X(a)|$ and an element of $|\Oo _X(b)|$ with $\omega _X(a+b)$ globally generated. In particular, there are plenty of non-locally free examples. Among the examples we may take as $X$ the Segre variety
$\PP^{n_1}\times \cdots \times \PP^{n_k}$ with as $D$ the pull-back of $\Oo _{\PP^{n_i}}(1)$ by the projection $\pi _i: X\rightarrow \PP^{n_i}$ on the $i$-th factor.
\end{example}

\begin{example}\label{rt2}
Let $X$ be a smooth and connected projective surface with $H^0(T_X)\ne 0$. Fix an integer $r\ge 2$. In Lemma \ref{b1} we take $\Aa \cong \Oo _X$ and a general subset $Z$ of $X$ with cardinality $s\ge r-1 + h^0(\omega _X)$. Since $Z$ is general and $s>h^0(\omega _X)$, we have $h^0(\omega_X \otimes \Ii _{S\setminus \{p\}})=0$ for each $p\in Z$ and so the Cayley-Bacharach condition is satisfied. Thus the middle term $\Ee$ in the general extension (\ref{eqb1}) is locally free. We have $\det (\Ee) \cong \Oo_X$ and $\Ee$ is strictly semistable for any polarization of $X$. Since $H^0(T_X\otimes \Aa ^\vee )>0$, Lemma \ref{b1} gives the existence of a non-trivial $2$-nilpotent $\Phi : \Ee \rightarrow \Ee\otimes T_X$. From the long exact sequence of cohomology of 
$$0\to \Ii_Z \otimes \omega_X \to \omega_X \to \omega_X \otimes \Oo_Z \to 0,$$
we get $h^1(\Ii_Z \otimes \omega_X) \ge s-h^0(\omega_X) \ge r-1$ and so $\dim \Ext ^1(\Ii _Z,\Oo _X) \ge r-1$. Hence there is $\Ee$ with no trivial factor. Now we check that any locally free $\Ee$ with no trivial factor is indecomposable. Assume $\Ee \cong \Ee _1\oplus \Ee _2$ with $k =\mathrm{rank}(\Ee_1)$ and $1\le k\le r-1$. Let $\Gg _i\subseteq \Ee _i$ for $i=1,2$, be the image of the evaluation map $H^0(\Ee_i)\otimes \Oo _X\rightarrow \Ee_i$. Since $Z$ is not empty, we have $h^0(\Ee)=r-1$ and $\Gg := u(\Oo _X^{\oplus( r-1)})$ is the image of the evaluation map $H^0(\Ee)\otimes \Oo _X\rightarrow \Ee$. Since $\Gg _1\oplus \Gg _2\cong \Gg \cong \Oo _X^{\oplus (r-1)}$ and $\Gg$ is saturated in $\Ee$, $\Gg_i$ is locally free and saturated in $\Ee _i$ for each $i$. Since $\mathrm{rank}(\Gg _1)+\mathrm{rank}(\Gg _2) +1= \mathrm{rank}(\Ee _1)+\mathrm{rank}(\Ee _2)$, there exists $i$ with $\Ee_i = \Gg_i$ and so $\Ee$ has a trivial factor, contradicting our assumptions.
\end{example}

In Condition \ref{con}, if $\Aa$ is negative with respect to a polarization $\Hh$, then the co-Higgs bundle $(\Ee ,\Phi)$ in Lemma \ref{b1} is not $\Hh$-semistable, because $\mathrm{ker}(\Phi )=\Oo _X^{\oplus (r-1)}$. Note that if $\Ee$ is (semi)stable with respect to $\Hh$, then each co-Higgs structure on $\Ff$ has the same property. Thus it is necessary to check when $\Ee$ is (semi)stable and we will focus on the sheaves in Condition \ref{con}-(ii) for a few cases such as
\begin{itemize}
\item blow-ups of $\PP^2$ at a finite set of points;
\item a smooth quadric surface;
\item the three-dimensional projective space $\PP^3$.
\end{itemize}

\begin{example}\label{b3}
Let $X=\PP^2$ and take $\Aa \cong \Oo _{\PP^2}(1)$. Note that the Cayley-Bacharach condition is satisfied for any locally complete intersection zero-dimensional subscheme, or a finite set, $Z$ to get a locally free sheaf $\Ee$ with $c_1(\Ee)=1$ and $c_2(\Ee) =\deg (Z)$. For $\Ee$ to have no trivial factor, it is sufficient to have $\deg (Z) \ge r-1$ and that the extension is general. If $r=2$ and (\ref{eqb1}) does not split, then $\Ee$ is stable. Note that (\ref{eqb1}) does not split if $Z\ne \emptyset$ and $\Ee$ is locally free. 

Now assume $r\ge 3$. Note that $\Ee$ is semistable if and only if it is stable i.e. there is no subsheaf $\Gg \subset \Ee$ with positive degree and rank less than $r$. We assume that $\Ee$ is locally free. Since $h^1(\Oo _{\PP^2})=0$, we have $h^0(\Ee )=r-1+h^0(\Ii _Z(1))$. Assume that $\Ee$ is not semistable and so the existence of a subsheaf $\Gg \subset \Ee$ of rank $s<r$ with maximal positive degree among all subsheaves. Then $\Gg$ is saturated in $\Ee$, i.e. $\Ee /\Gg$ has no torsion. We take $s$ maximal with the previous properties, i.e. if $s\le r-2$ we assume that no subsheaf of $\Ee$ with rank $\{s+1,\dots ,r-1\}$ has positive degree. Since $\deg (\Gg \cap u(\Oo _{\PP^2}^{\oplus (r-1)})) \le 0$, we have $v(\Gg ) \ne 0$ and $\deg (v(\Gg )) >0$. Thus we have $v(\Gg )\cong \Ii _W(1)$ for some zero-dimensional subscheme $W\supseteq Z$. From $h^0(\Ii _W(1)) \le h^0(\Ii _Z(1))$, we get
$$h^0(\Gg )\le h^0(\Ii _Z(1)) +h^0(\Gg \cap u(\Oo _{\PP^2}^{\oplus (r-1)})).$$
Since $v(\Gg )$ is not trivial, we have
$$\left\{
                                           \begin{array}{ll}
                                             \mathrm{rank}(\Gg \cap u(\Oo _{\PP^2}^{\oplus (r-1)}))=s-1, & \hbox{if $s>1$;}\\                                      
                                             \Gg \cap  u(\Oo _{\PP^2}^{\oplus (r-1)})=0, & \hbox{if $s=1$.}
                                            \end{array}
                                         \right.$$
Since $\Gg \cap u(\Oo _{\PP^2}^{\oplus (r-1)})$ is a subsheaf of a trivial vector bundle, we have 
$$\deg (\Gg \cap u(\Oo _{\PP^2}^{\oplus (r-1)}))\le 0 ~,~h^0(\Gg \cap u(\Oo _{\PP^2}^{\oplus (r-1)})) \le s-1.$$ 
It implies that $h^0(\Ee /\Gg )>0$ and so take a trivial subsheaf $\Oo _{\PP^2} \subseteq \Ee /\Gg$. Let $\pi : \Ee \rightarrow \Ee/\Gg$ be the quotient map. If $s\le r-2$, then $\pi^{-1}(\Oo _{\PP^2})$ contradicts the maximality of $s$. Now assume $s=r-1$. Since $\Ee/\Gg$ is a torsion-free sheaf of rank one with a non-zero section, we have $\Ee/\Gg \cong \Oo _{\PP^2}$. Since $h^0(\Ee/\Gg )=1$, we get $h^0(\Gg \cap u(\Oo _{\PP^2}^{\oplus (r-1)}))=s-1 =r-2$. Then any element in 
$$H^0(u(\Oo _{\PP^2}^{\oplus (r-1)}))\setminus H^0(\Gg \cap u(\Oo _{\PP^2}^{\oplus (r-1)}))$$ 
induces a splitting of the surjection $\Ee \rightarrow \Ee/\Gg \cong \Oo _{\PP^2}$, contrary to the assumption that $\Ee$ has no trivial factor.  
\end{example}

\begin{example}\label{bbb1}
Let $\pi: X\rightarrow \PP^2$ be the blow-up at two points, say $p_1$ and $p_2$. Setting $D_i:= \pi^{-1}(p_i)$ for $i=1,2$ and writing
$$\Oo _X(a;0,0):= \pi^\ast \Oo _{\PP^2}(a)~,~ \Oo _X(0;b,0):=\Oo _X(bD_1)~,~\Oo _X(0;0,c):= \Oo _X(cD_2),$$
we have $\omega_X \cong \Oo_X(-3;1,1)$. Let $D\subset X$ be the strict transform of the line through $p_1$ and $p_2$ and then we have $\{D\} = |\Oo _X(1;-1,-1)|$. Recall that for any smooth projective variety $Y$ the vector space $H^0(T_Y)$ is the tangent space
at the identity of the scheme $\mathrm{Aut}(Y)$. So we have $h^0(T_X) =4$, $h^0(T_X(-D_1)) = h^0(T_X(-D_2)) = 6$ and $h^0(T_X(-D_1-D_2)) =h^0(T_X(-D_1-D_2-D)) =4$. Set 
\begin{align*}
\Ss &:= \{\Oo _X,\Oo _X(D_1),\Oo _X(D_2),\Oo _X(D),\Oo _X(D_1+D_2),\Oo _X(D_1+D_2+D)\};\\
\Ss_1&:=\left\{\Oo_X(B_1+B_2-B_3), \Oo_X(B_1-B_2)~|~ \{B_1,B_2,B_3\} = \{D_1,D_2,D\}\right\}.
\end{align*}
If we take as $\Aa$ any element of $\Ss \cup S_1$, then we have $h^0(T_X\otimes \Aa ^\vee )>0$. Note that $h^0(\Aa)=1$ if $\Aa \in \Ss$ and $h^0(\Aa)=0$ if $\Aa \in \Ss_1$. Now fix an integer $r\ge 2$ and take as $Z$ a general subset of $X$ with cardinality $s$ in Condition \ref{con}-(ii). Assume for the moment that the middle term $\Ee$ of (\ref{eqb1}) is locally free. If $\Aa \cong\Oo_X$, then $\Ee$ is strictly semistable for any polarization of $X$. 

Assume $\Aa \in \Ss\setminus \{\Oo _X\}$ and fix a polarization $\Hh$ of $X$. If $\Ll \subset \Ee$ is a saturated subsheaf of rank one with positive $\Hh$-slope, then it is a line bundle. Since $\Ll \cdot \Hh >0$, we have $\Ll \nsubseteq u(\Oo _X)$. Since $\mathrm{Im}(\Phi )\subseteq u(\Oo _X)\otimes T_X$, we have $\Phi (\Ll) \nsubseteq \Ll \otimes T_X$. Thus $(\Ee ,\Phi)$ is $\Hh$-stable.

Now we check a criterion for $s$ with which $\Ee$ is locally free; moreover if $r>2$, we also want $s$ so that $\Ee$ has no trivial factor. In the case $s=0$, $\Ee$ is decomposable and so we may assume $s>0$. First assume $r=2$. In this case we only need to check the Cayley-Bacharach condition. Indeed this condition is satisfied, because $H^0(\omega _X)=0$. Now assume $r>2$ and then by the case $r=2$ a general $\Ee$ fitting into (\ref{eqb1}) is locally free. To check that it has no trivial factor it is sufficient to have $\dim \Ext ^1 (\Ii _Z\otimes \Aa ,\Oo _X) \ge r-1$, because (\ref{eqb1}) is induced by $r-1$ elements of $\Ext ^1 (\Ii _Z\otimes \Aa ,\Oo _X)$ and a trivial factor of $\Ee$ would be a factor of the subsheaf $u(\Oo _X^{\oplus (r-1)})$ of $\Ee$, since we have $h^0(\Ii _Z\otimes \Aa) =0$ due to generality of $Z$. Now for any $\Aa \in \Ss$, we have $\Ext^1(\Ii_Z\otimes \Aa, \Oo_X) \cong H^1(\Ii_Z \otimes \Aa \otimes \omega_X)$ whose dimension is always $s$ and so we may choose $s$ at least $r-1$. 
\end{example}

\begin{example}\label{bbb2}
Let $\pi:X\rightarrow \PP^2$ be the blow-up at three non-collinear points $p_1$, $p_2$ and $p_3$. Set $D_i:= \pi^{-1}(p_i)$ for $i=1,2,3$ and writing $\Oo_X(a;0,0,0):=\pi^\ast \Oo_{\PP^2}(a)$, 
$$\Oo _X(0;b,0,0):=\Oo _X(bD_1)~,~\Oo _X(0;0,c,0):= \Oo _X(cD_2)~,~\Oo _X(0;0,0,d):= \Oo _X(dD_3),$$
we have $\omega _X\cong \Oo _X(-3;1,1,1)$. For any $h\in \{1,2,3\}$, let $T_h\subset X$ be the strict transform of the line through $p_i$ and $p_j$ with $\{h,i,j\} = \{1,2,3\}$. We have $\{T_1\} = |\Oo _X(1;0,-1,-1)|$ and similar formulas hold for $T_2$ and $T_3$. As in Example \ref{bbb1} we have $h^0(T_X) =h^0(T_X(-D_1-D_2-D_3-T_1-T_2-T_3)) =2$.

Let $\Zz$ be the collection of the line bundles $\Oo _X(D)$ with $D>0$ and $D\subseteq D_1\cup D_2\cup D_3\cup T_1\cup T_2\cup T_3$. As in Example \ref{bbb1}, if $\Aa \cong \Oo_X$, then $\Ee$ is stable for any polarization, and if $\Aa \in \Zz$, then $(\Ee, \Phi)$ is stable for any polarization. We may also take as $\Aa$ a line bundle $\Oo _X(B)$ with $B\ne 0$, $B$ a sum of some of the divisors $D_i$ and $T_j$ with sign. In this case $(\Ee ,\Phi)$ is (semi)stable for some polarization, but not for all polarizations. Note that in any case we have $h^0(T_X \otimes \Aa^\vee)>0$. 
\end{example}

\begin{example}\label{bbb3}
Fix an integer $k\ge 3$ and a line $\ell \subset \PP^2$. Let $\pi : X\rightarrow \PP^2$ be the blow-up at $k$ points $p_1,\dots ,p_k\in \ell$. Set $D_i:= \pi^{-1}(p_i)$ for $i=1,\dots ,k$ and let $D\subset X$ be the strict transform of $\ell$. Then we have 
$$(\pi^\ast \Oo _{\PP^2}(1))(-D_1-\cdots -D_k) \cong \Oo _X(D)~,~\omega _X\cong (\pi^\ast \Oo _{\PP^2}(-3))(D_1+\cdots +D_k).$$
We also have $h^0(T_X) =h^0(T_X(-D_1 -\cdots -D_k)) >0$.

Let $\Zz$ be the collection of the line bundles $\Oo _X(T)$ with $T>0$ and $T\subseteq D\cup D_1\cup \cdots \cup D_k$. As in Example \ref{bbb1} and \ref{bbb2}, if $\Aa \cong \Oo_X$, then $\Ee$ is stable for any polarization, and if $\Aa \in \Zz$, then $(\Ee, \Phi)$ is stable for any polarization. We may also take as $\Aa$ a line bundle $\Oo _X(B)$ with $B\ne 0$, $B$ a sum of some of the irreducible components of $D\cup D_1\cup \cdots \cup D_k$ with sign. In this case $(\Ee ,\Phi)$ is (semi)stable for some polarization, but not for all polarizations. Again in any case we have $h^0(T_X \otimes \Aa^\vee)>0$.
\end{example}

\begin{example}\label{b0}
Let $X$ be a smooth quadric surface and take $\Aa$ from 
$$\left\{\Oo _X,\Oo _X(1,0), \Oo _X(2,0) , \Oo _X(0,1), \Oo_X(0,2)\right\}$$
In each case the Cayley-Bacharach condition is satisfied. If $\Aa\cong \Oo _X$, then for any $r\ge 2$ and integer $\deg (Z)\ge 0$ we get vector bundles which are strictly semistable for any polarization (see Example \ref{b3}). Now assume $\Aa \not \cong \Oo _X$ and let $\Hh$ be any polarization on $X$. We claim that $(\Ee,\Phi)$ is $\Hh$-stable. Take an integer $s\in \{1,\dots ,r-1\}$ and a subsheaf $\Gg\subset \Ee$ of rank $s$ with maximal $\Hh$-slope and with $\Phi (\Gg)\subset \Gg\otimes T_X$. We have $\mathrm{Im}(\Phi)\subset \Oo _X\otimes T_X$ and $\mathrm{ker}(\Phi )\cong \Oo _X^{\oplus (r-1)}$. Thus the $\Hh$-slope of $\Gg\cap \mathrm{ker}(\Phi)$ is at most zero. We have $\Phi (\Gg) \subset \Oo _X\otimes T_X$ and so $\Gg \subset \Oo _X^{\oplus (r-1)}$. In particular, we have $\deg _\Hh (\Gg )\le 0$ and so $(\Ee,\Phi)$ is stable for any polarization. In many cases
even $\Ee$ is stable for some or most polarizations.

Assume $\Aa \cong \Oo _X(1,0)$. If $Z=\emptyset$, then $\Ee \cong \Oo _X\oplus \Oo _X(1,0)$ and so $\Ee$ is not semistable for any polarization. Assume $Z\ne \emptyset$ and that $(\Ee ,\Phi)$ is not stable with respect to a polarization $\Hh\cong\Oo_X(a,b)$ with $b<2a$. There is a saturated subsheaf $\Ll = \Oo _X(u,v) \subset \Ee$ of rank one with $av+bu \ge b/2$. In particular, at least one of the integers $u$ and $v$ is positive. Write $\Ee/\Ll \cong \Ii _W(1-u,0-v)$ for some zero-dimensional scheme $W\subset X$. We have $c_2(\Ee) = \deg (W) + v(1-u) -uv$. Composing the inclusion $\Ll \subset \Ee$ with the surjection $v$ in (\ref{eqb1}), we get a non-zero map $f: \Oo _X(u,v)\rightarrow \Ii _Z(1,0)$, and so we get $v\le 0$ and $u=1$. 

First assume $v<0$ and then we have $h^0(\Ll )=0$. Since $H^0(\Ee )\ne 0$, we get $h^0(\Ii _W(0,-v)) >0$. Since $b>0$ and $0<a <2b$, we get $av+b<b/2$, a contradiction. 

Now assume $v=0$ and we get $h^0(\Ll )=2$. Then we have $h^0(\Ee)\ge 2$. Since $Z$ is not empty, (\ref{eqb1}) implies that $Z$ is a single point and so $c_2(\Ee)=1$. From $\Ee/\Ll \cong \Ii _W$, we get $c_2(\Ee ) =\deg (W)$ and so $W$ is a single point. The map $u$ in (\ref{eqb1}) and the inclusion $\Ll \subset \Ee$ induce an injective map $j: \Oo _X(1,0)\oplus \Oo _X\rightarrow \Ee$. Since $j$ is an injective map between vector bundles with the same rank and isomorphic determinant, it is an isomorphism. Thus we have $c_2(\Ee )=0$, a contradiction. The same proof works for the case $\Aa \cong\Oo _X(0,1)$ for the polarization $\Hh\cong\Oo_X(a,b)$ with $a<2b$. 

For $r=2$ we recover most of the existence part in part (1) of \cite[Theorem in page 2]{VC1}. The advantage of the current argument is that we prove stability simultaneously with respect to many polarizations $\Hh \cong\Oo _X(a,b)$ and that we explicitly state that our co-Higgs fields are nilpotent. To be in the framework of part (2) of \cite[Theorem in page 2]{VC1} we need to modify the general set-up. Instead of vector bundles $\Ee$ fitting into the exact sequence (\ref{eqb1}) with $\Aa$ as above, we take vector bundles fitting into the exact sequence with $\Aa\cong\Oo_X(1,-1)$,
\begin{equation}\label{eqb1.1}
0 \to \Oo _X \to \Ee \to \Ii _Z(1,-1)\to 0
\end{equation}
with $Z$ a zero-dimensional scheme, where we have $\det (\Ee )\cong \Oo _X(1,-1)$. By taking a twist by some $\Oo _X(\alpha ,\beta )$ we get vector bundles of rank two with an arbitrary determinant $\Oo _X(\gamma ,\delta )$ with both $\gamma$ and $\delta$ odd. But the twist may destroy the stability with respect to certain polarizations.
\end{example}

\begin{example}\label{b4}
Let $X=\PP^3$ and take $\Aa \cong \Oo _{\PP^3}(1)$. Then we have either
\begin{itemize}
\item $\omega_Z(3)$ is spanned, if $r\ge 3$;
\item $\omega_Z \cong \Oo_Z(-3)$, if $r=2$. 
\end{itemize}
In case of $r =2$, we get curvilinear reflexive sheaves $\Ee$ with $c_2(\Ee)=\deg (Z)$ and $c_3(\Ee) = \deg (\omega _Z) + 3\deg (Z)$; see \cite{B}. We always assume $Z\ne \emptyset$, so that $\Ee$ is indecomposable. We claim that $\Ee$ is stable. Assume the existence of a line bundle $\Oo _X(t)\subset \Ee$ with $t>0$. Composing with the surjection $v:\Ee \rightarrow \Ii _Z(1)$ we get the zero map, because $t>0$ and $Z\ne \emptyset$. Thus we get $\Oo _X(t)\subseteq \Oo _X$, a contradiction.

Now we take $r\ge 3$ and $Z$ a non-empty disjoint union of smooth curves. Assume that $\Ee$ has no trivial factor, e.g. if $Z$ is large, and that $h^0(\Ii _Z(1)) =0$, i.e. $Z$ is not planar.  If $(\Ee,\Phi )$ is not stable, then there is a subsheaf $\Gg \subset \Ee$ of rank $s\in \{1,\dots ,r-1\}$ with $\deg (\Gg )>0$ such that $\Phi (\Gg ) \subset \Gg\otimes T_X$ and $s$ is the minimum among all subsheaves of $\Ee$ with the other properties. Since $\mathrm{Im}(\Phi)\subset T_X$ has rank one, so we get $\mathrm{Im}(\Phi )^{\vee \vee } \cong \Oo _X(1)$, i.e. $\mathrm{Im}(\Phi )\cong \Ii _W(1)$ for some $W\subset \PP^3$ with $\dim (W)\le 1$. $\Gg$ is saturated in $\Ee$, i.e. $\Ee /\Gg$ is torsion-free, and so $\Gg$ is a reflexive sheaf. Since $\Ee$ is assumed to be locally free, in the case $s=1$ we get $\Gg \cong \Oo _X(1)$. We exclude this case, because $\Oo _X(1)\nsubseteq \Ee$. 

Now assume $r=3$ and $s=2$. The map $\Gg \rightarrow \Ii _Z(1)$ induced by the surjection in (\ref{eqb1}) must be non-zero. Due to $s=2$, we get $\Gg \not\cong \Oo _X^{\oplus 2}$ and $\Oo _X^{\oplus 2}$ is the image of the evaluation map $H^0(\Ee) \otimes \Oo _X\rightarrow  \Ee$, we have $h^0(\Gg )\le 1$ and so $h^0(\Ee/\Gg )>0$. Since $\Ee/\Gg$ is a torsion-free sheaf of rank one, we get $\Ee/\Gg \cong \Oo _X$. Since $h^0(\Ee )>h^0(\Gg)$, there is $\sigma \in H^0(\Ee)$ whose image in $\Ee/\Gg \cong \Oo _X$. The map $1\mapsto \sigma$ shows that $\Oo _X$ is a factor of $\Ee$, contradicting our assumption.

Now we assume $r=3$ and list several $Z$ for which the middle term $\Ee$ of a general extension (\ref{eqb1}) with $\Aa \cong \Oo _X(1)$ has not $\Oo _X$ as a factor; in each case we certainly need that $\omega _Z(3)$ is spanned and that $h^0(\omega _Z(3)) \ge 2$. Assume $\Ee \cong \Oo _X\oplus \Gg$. Since $\Ee$ is locally free, so is $\Gg$. Since $h^0(\Gg )=1$ and $h^0(\Gg (-1)) = h^0(\Ee(-1)) =0$, $\Gg$ fits in an exact sequence
\begin{equation}\label{eqb1.2}
0\to \Oo _X \to \Gg \to \Ii _W(1)\to 0,
\end{equation}
where $W$ is a locally complete curve with $\omega _W(3)\cong \Oo _W$ and $h^0(\Ii _W(1)) =0$. We obviously have that $W$ is not reduced. From $H^0(\Gg (-1)) =0$, we get that $\Gg$ is a stable vector bundle of rank two on $\PP^3$ with $c_1(\Gg )=1$ and $c_2(\Gg)=\deg (W)$. The subsheaf $\Oo _X$ of $\Gg$ is the image of the evaluation map $H^0(\Gg )\otimes \Oo _X \rightarrow \Gg$. So the surjective maps in (\ref{eqb1}) and (\ref{eqb1.2}) induce a non-zero map $\Ii _W(1) \rightarrow \Ii _Z(1)$ and so we get $W\supseteq Z$. Since $c_2(\Ff ) =c_2(\Gg)$, we have $\deg (Z)=\deg (W)$ and so $Z=W$, which gives a contradiction each time we chose $Z$ with $\omega _Z\not\cong \Oo _Z(-3)$, e.g. each time we chose as $Z$ a disjoint union of $d$ lines. 
\end{example}


\section{Existence and Non-existence of co-Higgs structures}

Let $X$ be a smooth projective variety of dimension $n$ with $\mathrm{Pic}(X) \cong \ZZ$, where the ample generator $\Oo _X(1)$ is very ample. We keep this assumption until Theorem \ref{ii1}, where we assume $\mathrm{Num}(X)\cong \ZZ$. Set $\delta:=\deg (X)$ with respect to $\Oo_X(1)$. For any reflexive sheaf $\Ee$ of rank two on $X$, define $x_{\Ee}$ to be
\begin{equation}\label{xxe}
\mathrm{max}\{x\in \ZZ~|~ h^0(\Ee(-x))>0\}.
\end{equation}
Then $\Ee$ fits into an exact sequence for a subscheme $Z$ with pure codimension two, 
\begin{equation}\label{eqd1}
0 \to \Oo _X(x_{\Ee})\to \Ee \to \Ii _Z(c_1-x_{\Ee})\to 0,
\end{equation}
where $c_1=c_1(\Ee)$ and $c_2(\Ee) = \deg (Z) +x_{\Ee}(c_1-x_{\Ee})\delta$. Note that we have $h^0(\Ii _Z(c_1-x_{\Ee}-1)) =0$ by definition of $x_{\Ee}$.  

\begin{proposition}\label{cc1}
Let $\Ee$ be a reflexive sheaf of rank two on $X$ with $c_1(\Ee) \in \{-1,0\}$ and $x_{\Ee} \le -2$. Then any nilpotent map $\Phi : \Ee \rightarrow \Ee \otimes T_X$ is trivial.
\end{proposition}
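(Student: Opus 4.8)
The plan is to exploit the canonical exact sequence \eqref{eqd1} together with the smallness of $x_{\Ee}$ to force any nilpotent $\Phi$ to vanish. Suppose $\Phi:\Ee\to\Ee\otimes T_X$ is nilpotent, so $\Phi\circ\Phi=0$ and $\Phi\wedge\Phi=0$. Write $L:=\Oo_X(x_{\Ee})$ and consider the composite
\[
L\hookrightarrow \Ee \xrightarrow{\ \Phi\ } \Ee\otimes T_X \twoheadrightarrow \Ii_Z(c_1-x_{\Ee})\otimes T_X.
\]
This is an element of $\Hom(L,\Ii_Z(c_1-x_{\Ee})\otimes T_X)$, which injects into $H^0\bigl(T_X(c_1-2x_{\Ee})\bigr)$. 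The first step is to check that this group vanishes: since $c_1\in\{-1,0\}$ and $x_{\Ee}\le-2$ we have $c_1-2x_{\Ee}\ge c_1+4\ge 3$; but wait, that makes the twist positive, not negative, so I instead argue in the opposite direction, mapping \emph{into} $L$ rather than out of it. Precisely, I would look at $\Phi$ restricted so as to land in the sub-line-bundle: the point is that $\Ii_Z(c_1-x_{\Ee})\otimes T_X$ has no sections of the relevant negative twist. So the correct first step is: the image of $L$ under $\Phi$, projected to the quotient, lies in $\Hom(\Oo_X, \Ii_Z(c_1-x_{\Ee})\otimes T_X)=H^0(\Ii_Z\otimes T_X(c_1-x_{\Ee}-x_{\Ee}))$, and since by definition of $x_{\Ee}$ already $h^0(\Ii_Z(c_1-x_{\Ee}-1))=0$, a fortiori (using $x_{\Ee}\le -2$ so that $-x_{\Ee}\ge 2$, and $T_X$ globally generated with $H^0(T_X(-2))=0$ on $X\ne\PP^1$) this Hom group vanishes. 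Hence $\Phi(L)\subseteq L\otimes T_X$, i.e. $L$ is $\Phi$-invariant.

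Granting $\Phi$-invariance of $L$, the induced endomorphism on the line bundle $L$ is a section of $L^\vee\otimes L\otimes T_X = T_X$, hence a global vector field; and the induced endomorphism on the quotient $\Ii_Z(c_1-x_{\Ee})$ is a section of $\mathcal{H}om(\Ii_Z(c_1-x_{\Ee}),\Ii_Z(c_1-x_{\Ee}))\otimes T_X$, which (as $\Ii_Z$ is torsion-free of rank one, reflexive in codimension one) lives in $H^0(T_X)$ as well. The second step is to use nilpotency: $\Phi^2=0$ forces each of these diagonal pieces to be nilpotent as an endomorphism of a rank-one sheaf, hence zero. Therefore $\Phi$ is \emph{strictly} upper-triangular with respect to \eqref{eqd1}, so $\Phi$ factors through the off-diagonal block $\Ii_Z(c_1-x_{\Ee})\to L\otimes T_X$, i.e. through $\Hom(\Ii_Z(c_1-x_{\Ee}), \Oo_X(x_{\Ee})\otimes T_X)\hookrightarrow H^0(T_X(2x_{\Ee}-c_1))$.

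The third and decisive step is to observe that $2x_{\Ee}-c_1\le -4-c_1\le -3$ (using $x_{\Ee}\le-2$ and $c_1\in\{-1,0\}$), and that $T_X(2x_{\Ee}-c_1)$ therefore has no global sections: indeed $H^0(T_X(-2))=0$ for every smooth projective $X\ne\PP^1$, as noted in the remark after Condition \ref{con}, and $-3\le 2x_{\Ee}-c_1$ is $\le -2$, so $T_X(2x_{\Ee}-c_1)$ is a negative-enough twist. Hence the off-diagonal block vanishes too, and combined with the vanishing of the diagonal we conclude $\Phi=0$, contradicting non-triviality. I expect the main obstacle to be the bookkeeping in the first step: one must be careful that \emph{both} the projection $\Ee\to\Ii_Z(c_1-x_{\Ee})$ composed with $\Phi|_L$, and the restriction of $\Phi$ to $L$ followed by projection, are controlled, and that the relevant Hom-groups really are the cohomology groups claimed — this requires knowing $Z$ has codimension two so that $\mathcal{H}om(\Ii_Z,\Ii_Z)=\Oo_X$ and that $T_X$ being globally generated with $H^0(T_X(-2))=0$ kills everything with a twist by $\Oo_X(-2)$ or lower. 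Everything else is then a short diagram chase using $\Phi^2=0$.
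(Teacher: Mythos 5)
Your argument breaks down at the first step, and the later steps inherit the damage. You want $L=\Oo_X(x_{\Ee})$ to be $\Phi$-invariant, and you try to deduce this from the vanishing of $\Hom(L,\Ii_Z(c_1-x_{\Ee})\otimes T_X)\cong H^0(\Ii_Z\otimes T_X(c_1-2x_{\Ee}))$. But, as you yourself notice, $c_1-2x_{\Ee}\ge 3$ under the hypotheses: this is a \emph{positive} twist of $T_X$, and the fact that $h^0(\Ii_Z(c_1-x_{\Ee}-1))=0$ (a statement about sections of a line bundle vanishing on $Z$) implies nothing about sections of $\Ii_Z\otimes T_X(c_1-2x_{\Ee})$. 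For instance on $\PP^2$ with $x_{\Ee}=-2$ this group is $H^0(\Ii_Z\otimes T_{\PP^2}(3))$ or $H^0(\Ii_Z\otimes T_{\PP^2}(4))$, which is large for any zero-dimensional $Z$. So the ``a fortiori'' is unfounded; moreover, $\Phi$-invariance of the \emph{chosen} sub-line-bundle $\Oo_X(x_{\Ee})$ is not to be expected a priori. What nilpotency gives you for free is that $\ker(\Phi)$ is a saturated rank-one subsheaf, hence some $\Oo_X(t)$ with $t\le x_{\Ee}$ --- but it need not coincide with the subsheaf appearing in (\ref{eqd1}). Once Step 1 fails, the triangular decomposition used in Steps 2--3 is unavailable (those steps, taken on their own, would be fine).

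The paper's proof avoids the extension (\ref{eqd1}) altogether and works with the filtration that is canonically adapted to $\Phi$: if $\Phi\ne 0$ then $\ker(\Phi)\cong\Oo_X(t)$ with $t\le x_{\Ee}$, and $\Image(\Phi)\cong\Ee/\ker(\Phi)\cong\Ii_B(c_1-t)$ with $\codim(B)\ge 2$. Since $\Omega^1_X(2)$ is globally generated, $T_X\into\Oo_X(2)^{\oplus N}$, so $\Image(\Phi)\subset\Ee\otimes T_X$ embeds into $\Ee(2)^{\oplus N}$; a nonzero map $\Ii_B(c_1-t)\rightarrow\Ee(2)$ yields $h^0(\Ee(2-c_1+t))\ne 0$, i.e. $c_1-t-2\le x_{\Ee}$. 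Combined with $t\le x_{\Ee}$ this gives $2x_{\Ee}\ge c_1-2\ge -3$, contradicting $x_{\Ee}\le -2$. If you want to salvage your triangular strategy, run it with respect to $\ker(\Phi)$ instead of $\Oo_X(x_{\Ee})$: the ``diagonal'' then vanishes tautologically, and the entire content is the bound on the off-diagonal block, which is exactly the computation just described.
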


\begin{proof}
If $\Phi \ne 0$, then we have $\mathrm{ker}(\Phi) \cong \Oo _X(t)$ for some $t\le x_{\Ee} \le -2$. Since $\mathrm{Im}(\Phi)$ has rank one with no torsion, we have $\mathrm{Im}(\Phi)\cong \Ii _B(-t+c_1)$ for some closed scheme $B\subset X$ with $\dim (B)\le n-2$. Since $\Omega _X^1(2)$ is globally generated and $\mathrm{Im}(\Phi)$ is a subsheaf of $\Ee\otimes T_X$, we may consider $\mathrm{Im}(\Phi)$ as a subsheaf of $\Ee (2)^{\oplus N}$ for some $N>0$. In particular, we get $-t+c_1-2 \le x_{\Ee}$, a contradiction.
\end{proof}

\begin{proposition}\label{cc1.0}
Assume $X\ne \PP^n$. If $\Ee$ is a reflexive sheaf of rank two on $X$ with $c_1(\Ee )+2x_\Ee =-3$, then any nilpotent map $\Phi : \Ee \rightarrow \Ee \otimes T_X$ is trivial.
\end{proposition}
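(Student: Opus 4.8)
The plan is to argue by contradiction following the pattern of the proof of Proposition \ref{cc1}, the new ingredient being the vanishing $H^0(T_X(-1))=0$, which holds precisely because $X\ne \PP^n$. First I would reduce to a normalized first Chern class: replacing $\Ee$ by a twist $\Ee(k)$ changes neither the hypothesis (since $c_1(\Ee(k))=c_1(\Ee)+2k$ and $x_{\Ee(k)}=x_\Ee-k$, so $c_1(\Ee(k))+2x_{\Ee(k)}=-3$ again) nor the conclusion (a nonzero $2$-nilpotent map on $\Ee$ is the same datum on $\Ee(k)$); and since $c_1(\Ee)+2x_\Ee=-3$ forces $c_1(\Ee)$ to be odd, I may assume $c_1:=c_1(\Ee)=-1$, hence $x_\Ee=-1$.

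Now suppose $\Phi\ne 0$. As in the proof of Proposition \ref{cc1}, $\ker\Phi$ is a rank-one saturated subsheaf of the reflexive sheaf $\Ee$, hence reflexive, hence a line bundle $\Oo_X(t)$; from $\Oo_X(t)\into\Ee$ and the definition of $x_\Ee$ we get $t\le x_\Ee=-1$. The quotient $\Ee/\ker\Phi$ is torsion-free of rank one with $(\Ee/\ker\Phi)^{\vee\vee}\cong\Oo_X(c_1-t)$, so $\Ee/\ker\Phi\cong\Ii_W(c_1-t)$ for a closed subscheme $W\subset X$ of codimension $\ge 2$. Because $\Phi\circ\Phi=0$ and $T_X$ is locally free, $\Image\Phi\subseteq\ker\Phi\otimes T_X$ inside $\Ee\otimes T_X$, so $\Phi$ factors through a \emph{nonzero} $\Oo_X$-linear map $\Ii_W(c_1-t)\cong\Ee/\ker\Phi\to\ker\Phi\otimes T_X=T_X(t)$.

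The contradiction comes from computing this $\Hom$ group. Since $W$ has codimension $\ge 2$ in the Cohen--Macaulay variety $X$ and $T_X$ is locally free, applying $\mathcal{H}om(-,T_X(t))$ to $0\to\Ii_W\to\Oo_X\to\Oo_W\to 0$ shows $\mathcal{H}om(\Ii_W,T_X(t))\cong T_X(t)$, whence $\Hom(\Ii_W(c_1-t),T_X(t))\cong H^0(T_X(2t-c_1))=H^0(T_X(2t+1))$. So $H^0(T_X(2t+1))\ne 0$; but $t\le-1$ gives $2t+1\le-1$, and multiplying by a nonzero section of $\Oo_X(-2t-2)$ embeds $T_X(2t+1)$ into $T_X(-1)$, so $H^0(T_X(2t+1))\subseteq H^0(T_X(-1))=0$ by \cite{wahl} since $X\ne\PP^n$ --- a contradiction. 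The one point that requires care (and the reason the reduction step is not a mere convenience) is that the crude bound $2t-c_1\le 2x_\Ee-c_1=-3-2c_1$ is negative, and hence forces the desired vanishing, only for $c_1\ge-1$; so checking the twist-invariance of $x_\Ee$, of nilpotency, and of the conclusion is the crux of making the argument uniform in $c_1$.
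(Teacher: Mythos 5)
Your overall route is the same as the paper's: normalize to $c_1(\Ee)=-1$, identify $\ker(\Phi)$ as a line bundle $\Oo_X(t)$ with $t\le x_{\Ee}$, use $\Phi\circ\Phi=0$ to force $\mathrm{Im}(\Phi)\subseteq\ker(\Phi)\otimes T_X$, extract a nonzero section of $T_X(2t-c_1)$, and conclude via Wahl's theorem. Your second and third paragraphs are in fact more careful than the paper's own proof, which pins down $t=-1$ by a rather loose appeal to Proposition \ref{cc1} and then writes $\mathrm{Im}(\Phi)\cong\Ii_B$; handling all $t\le-1$ uniformly and spelling out the computation $\Hom(\Ii_W(c_1-t),T_X(t))\cong H^0(T_X(2t-c_1))$ is correct and welcome.

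The gap sits exactly at the step you yourself flag as the crux. The twist formula is wrong: since $h^0(\Ee(k)(-x))=h^0(\Ee(-(x-k)))$, one has $x_{\Ee(k)}=x_{\Ee}+k$, not $x_{\Ee}-k$ (twisting up creates sections, so it raises $x$). Hence $c_1(\Ee(k))+2x_{\Ee(k)}=c_1(\Ee)+2x_{\Ee}+4k$: the quantity $c_1+2x_{\Ee}$ is \emph{not} twist-invariant (the invariant combination is $c_1-2x_{\Ee}$), so the reduction to $c_1=-1$ is not available. As your own closing computation shows, the unnormalized argument needs $2x_{\Ee}-c_1\le-1$, which under the literal hypothesis $c_1+2x_{\Ee}=-3$ holds only when $c_1\ge-1$; and for $c_1\le-3$ the statement as written actually fails --- for instance, on the smooth quadric threefold $Q_3$ the bundle $\Ee=\Oo_X\oplus\Oo_X(-3)$ has $x_{\Ee}=0$, so $c_1+2x_{\Ee}=-3$, yet it carries the nonzero $2$-nilpotent $\Phi$ of Lemma \ref{b1} built from any nonzero $\sigma\in H^0(T_X(3))$. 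So no argument can close this gap for the hypothesis as literally stated. To be fair, the paper's own proof opens with the same unjustified ``up to a twist we may assume $c_1(\Ee)=-1$''; the hypothesis evidently intended, and the one under which both your argument and the paper's are correct, is the twist-invariant condition $c_1(\Ee)-2x_{\Ee}=1$ (equivalently $c_1=-1$ and $x_{\Ee}=-1$ after normalizing $c_1\in\{-1,0\}$), which is how the proposition is actually invoked in the proof of Theorem \ref{ttt1}.
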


\begin{proof}
Up to a twist we may assume $c_1(\Ee )=-1$. Assume the existence of a non-zero nilpotent $\Phi : \Ee \rightarrow \Ee \otimes T_X$. We have $\mathrm{ker}(\Phi) \cong
\Oo _X(t)$ for some $t<0$. By Proposition \ref{cc1} we have $t=-1$.  Since $\mathrm{Im}(\Phi)$ has rank one with no torsion, we have $\mathrm{Im}(\Phi)\cong \Ii _B$ for some closed scheme $B\subset X$ with $\dim (B)\le \dim (X)-2$. Since  $\mathrm{Im}(\Phi )\subset \mathrm{ker}(\Phi)\otimes T_X$, we get $H^0(T_X(-1)) \ne 0$, and so
$X=\PP^n$ by \cite{wahl}, a contradiction.
\end{proof}

\begin{remark}\label{cc0}
Let $\Ee$ be a stable reflexive sheaf of rank two on $X$ with $c_1(\Ee) =-1$. By the stability of $\Ee$, we have $x_{\Ee}\le -1$. If $x_{\Ee}\le -2$, then any nilpotent map $\Phi :\Ee \rightarrow \Ee \otimes T_X$ is trivial by Proposition \ref{cc1}. As an example, we may take as $\Ee$ the Horrocks-Mumford bundle; $X=\PP^4$, $c_1=-1$ and $c_2=4$. If $x_{\Ee}=-1$, then $\Ee$ fits in an exact sequence
$$0 \to \Oo _X\to \Ee (1)\to \Ii _Z(1)\to 0$$
for some $2$-codimensional scheme $Z\subset X$. Assume $H^0(T_X(-1)) \ne 0$ and so $X=\PP^n$ by \cite{wahl}. Then by Lemma \ref{b1} there exists a non-trivial nilpotent map $\Phi : \Ee \rightarrow \Ee \otimes T_{\PP^n}$ with $\ker (\Phi)=\Oo_{\PP^n}$. 
\end{remark}

\begin{proposition}\label{cc2}
Let $\Ee$ be a stable reflexive sheaf of rank two on $\PP^n$ with $c_1(\Ee)=0$. Then there exists no non-trivial nilpotent map $\Phi :\Ee \rightarrow \Ee \otimes T_{\PP^n}$.
\end{proposition}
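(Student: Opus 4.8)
The plan is to run the same kind of argument as in Propositions \ref{cc1} and \ref{cc1.0}: analyse the kernel and image of a hypothetical nonzero nilpotent $\Phi$, exhibit an inclusion of $\mathrm{Im}(\Phi)$ into a twist of $T_{\PP^n}$, and rule it out by the vanishing of $H^0$ of that twist.

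First I would fix the structure of $\Phi$. Since $T_{\PP^n}$ is locally free, $-\otimes T_{\PP^n}$ is exact, so $\Phi\circ\Phi=0$ forces $\mathrm{Im}(\Phi)\subseteq\mathrm{ker}(\Phi)\otimes T_{\PP^n}$; a rank count then shows $\mathrm{ker}(\Phi)$ and $\mathrm{Im}(\Phi)$ are each of rank one (rank zero on either side would make $\Phi=0$). Since $\mathrm{Im}(\Phi)$ is a subsheaf of the torsion-free sheaf $\Ee\otimes T_{\PP^n}$, the quotient $\Ee/\mathrm{ker}(\Phi)\cong\mathrm{Im}(\Phi)$ is torsion-free, so $\mathrm{ker}(\Phi)$ is a saturated rank-one subsheaf of the reflexive sheaf $\Ee$; hence it is reflexive of rank one, i.e. a line bundle $\mathrm{ker}(\Phi)\cong\Oo_{\PP^n}(t)$. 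A nonzero map $\Oo_{\PP^n}(t)\to\Ee$ gives $t\le x_\Ee$, and stability of $\Ee$ with $c_1(\Ee)=0$ gives $x_\Ee\le -1$; so $t\le -1$. Being a torsion-free rank-one quotient of $\Ee$ with $c_1(\Ee)=0$, the image is of the form $\mathrm{Im}(\Phi)\cong\Ii_B(-t)$ for a closed $B\subset\PP^n$ with $\codim B\ge 2$.

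Then the containment $\mathrm{Im}(\Phi)\subseteq\mathrm{ker}(\Phi)\otimes T_{\PP^n}=T_{\PP^n}(t)$ reads, after twisting by $\Oo_{\PP^n}(t)$, as a nonzero inclusion $\Ii_B\hookrightarrow T_{\PP^n}(2t)$. Applying $\mathcal{H}om(-,T_{\PP^n}(2t))$ to $0\to\Ii_B\to\Oo_{\PP^n}\to\Oo_B\to 0$ and using that $T_{\PP^n}(2t)$ is locally free and $\codim B\ge 2$ (so both $\mathcal{H}om(\Oo_B,T_{\PP^n}(2t))$ and $\mathcal{E}xt^1(\Oo_B,T_{\PP^n}(2t))$ vanish), I obtain $\Hom(\Ii_B,T_{\PP^n}(2t))\cong H^0(T_{\PP^n}(2t))$. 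Since $t\le -1$ we have $2t\le -2$, and the Euler sequence gives $H^0(T_{\PP^n}(m))=0$ for all $m\le -2$ when $n\ge 2$. Hence $\Hom(\Ii_B,T_{\PP^n}(2t))=0$, contradicting $\Phi\ne 0$.

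I do not expect a genuine obstacle; the one point needing care is the isomorphism $\Hom(\Ii_B,T_{\PP^n}(2t))\cong H^0(T_{\PP^n}(2t))$, i.e. extending a morphism out of the ideal sheaf of a codimension-two subscheme to one out of $\Oo_{\PP^n}$, which is exactly where $\codim B\ge 2$ and local freeness of $T_{\PP^n}(2t)$ are both used. Note also that once $t\le -2$ one may simply quote Proposition \ref{cc1}, so the only genuinely new case is $x_\Ee=t=-1$; this is precisely where, by Remark \ref{cc0}, the analogous statement fails for $c_1(\Ee)=-1$ — there one meets $H^0(T_{\PP^n}(-1))\ne 0$ rather than $H^0(T_{\PP^n}(-2))=0$.
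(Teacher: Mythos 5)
Your proof is correct and follows the same strategy as the paper's: identify $\ker(\Phi)\cong\Oo_{\PP^n}(t)$ with $t\le -1$ and $\mathrm{Im}(\Phi)\cong\Ii_B(-t)$ with $B$ of codimension two, then derive from $\mathrm{Im}(\Phi)\subset\ker(\Phi)\otimes T_{\PP^n}$ a nonzero section of a twist $T_{\PP^n}(m)$ with $m\le -2$, which vanishes. Your version is in fact slightly cleaner: by using additivity of $c_1$ you avoid the paper's separate case in which $B$ contains a hypersurface, and since $2t\le -2$ for every $t\le -1$ you do not need to first pin down $t=-1$ via the argument of Proposition \ref{cc1}.
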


\begin{proof}
Since $\Ee$ is stable, we have $\mathrm{ker}(\Phi) \cong \Oo _{\PP^n}(t)$ for some $t\le -1$ and the proof of Proposition \ref{cc1} gives $t=-1$. Since $\mathrm{Im}(\Phi)$ has rank one with no torsion, we have $\mathrm{Im}(\Phi)\cong \Ii _B(1)$ for some closed subscheme $B\subsetneq \PP^n$.

First assume $\dim B\le n-2$. Since $\Phi \circ \Phi =0$, we have $\mathrm{Im}(\Phi) \subset \mathrm{ker}(\Phi)\otimes T_{\PP^n} \cong T_{\PP^n}(-1)$. In particular, we get a nonzero map $h: \Ii _B(1)\rightarrow T_{\PP^n}(-1)$. Since $T_{\PP^n}(-2)$ is locally free and $\dim B \le n-2$, we have 
$$H^0(\PP^n\setminus B,T_{\PP^n}(-2))_{|\PP^n\setminus B}) = H^0(\PP^n,T_{\PP^n}(-2))$$
by \cite[Proposition 1.6]{Hartshorne1}, which is trivial. But the map $h$ gives $H^0(T_{\PP^n}(-2)) \ne 0$, a contradiction. 

Now assume that $B$ contains a hypersurface of degree $e$. We get $\mathrm{Im}(\Phi)\cong \Ii _Z(1-e)$ for some closed subscheme $Z$ with $\dim Z \le n-2$. Since $c_1(\Ee )=0$ and $e>0$, $\Ee$ is not stable, a contradiction.
\end{proof}

\begin{proof}[Proof of Theorem \ref{ttt1}:]
Denote by $\Ss$ the set of all nilpotent maps and up to a twist we may assume $c_1(\Ee)\in \{-1,0\}$. By Proposition \ref{cc2} we can consider only the case of $c_1(\Ee )=-1$. By Proposition \ref{cc1} we have $\Ss =\{0\}$, unless $x_{\Ee}=-1$.
Thus we may assume $x_{\Ee} =-1$ and so $\Ee$ fits into an exact sequence
\begin{equation}\label{eqcc2}
0 \to \Oo _{\PP^n}(-1) \stackrel{\sigma}{\to} \Ee \to \Ii _Z \to 0
\end{equation}
with $Z$ of codimension $2$. By Lemma \ref{b1} a cheap way to get a non-trivial $\Phi$ is to take the composition of the surjection in (\ref{eqcc2}) with the inclusion $\Ii _Z\rightarrow \Oo _{\PP^n}(-1)\otimes T_{\PP^n}$. In this way we get an $(n+1)$-dimensional vector space contained in $\Ss$, isomorphic to $H^0(T_{\PP^n}(-1))$. Conversely, choose any arbitrary nonzero map $\Phi \in \Ss$. The proof of Proposition \ref{cc1} gives $\mathrm{ker}(\Phi) \cong \Oo _{\PP^n}(-1)$ and so $\mathrm{Im}(\Phi)\cong \Ii _B$ for some closed subscheme $B\subsetneq \PP^n$ of codimension two. Since $\Phi \circ \Phi =0$, we have $\mathrm{Im}(\Phi) \subset \mathrm{ker}(\Phi)\otimes T_{\PP^n} \cong T_{\PP^n}(-1)$, and thus $\Phi$ is also obtain by the same way as in Lemma \ref{b1}. Thus any such nilpotent map is represented by an element in $H^0(\Ee(1)) \times H^0(T_{\PP^n}(-1))$ with an action of $\CC^*$ defined by $c\cdot (\sigma, s)=(c\sigma, c^{-1}s)$. Thus the set of nilpotent maps is parametrized by 
$$H^0(\Ee(1))\times H^0(T_{\PP^n}(-1)) \sslash \CC^*,$$
which is the total space of $\Oo_{\PP^n}(-1)^{\oplus a}$ with $a=h^0(\Ee(1))$. Now the assertion follows from the observation that non-proportional sections of $\Ee(1)$ have different zeros as in \cite[Theorem 4.1]{Hartshorne1} and that if $\sigma$ of $s$ is trivial, then the pair $(\sigma, s)$ corresponds to the trivial nilpotent map.  
\end{proof}

We still assume that $X$ is a smooth projective variety with $\Pic(X)\cong \ZZ$ generated by an ample line bundle $\Oo _X(1)$ and $H^0(T_X(-2))=0$, which excludes the case $X\cong \PP^1$ by \cite{wahl}. Let $\Ee$ be a non-semistable reflexive sheaf of rank two on $X$ such that $(\Ee, \Phi)$ is semistable for a map $\Phi : \Ee \rightarrow \Ee \otimes T_X$. Without loss of generality we assume that $\Ee$ is initialized, i.e. $H^0(\Ee )\ne 0$ and $H^0(\Ee (-1)) =0$. Since $\Ee$ is not semistable, we have an exact sequence   
\begin{equation}\label{eqss1}
0 \to \Oo _X\to \Ee \to \Ii _Z(-b)\to 0
\end{equation}
with $b >0$ and $\dim (Z)\le \dim (X)-2$. 

\begin{lemma}\label{aaaa1}
Let $\Ee$ be a non-semistable reflexive sheaf of rank two on $X$ with $(\Ee, \Phi)$ semistable. Then we have $X\cong \PP^n$ with $n\ge 2$ and $b=1$. Also we have either 
\begin{itemize}
\item $\Ee \cong \Oo _{\PP^n}\oplus \Oo _{\PP^n}(-1)$, or 
\item $n=2$ and $Z$ is a point of $\PP^2$.
\end{itemize}
\end{lemma}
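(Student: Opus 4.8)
The plan is to analyze the subsheaf $\Oo_X \subset \Ee$ of (\ref{eqss1}) and use the hypothesis that $(\Ee,\Phi)$ is semistable to show that $\Oo_X$ cannot be $\Phi$-invariant — this is the whole point, since the destabilizing subsheaf of the reflexive sheaf must fail to be a co-Higgs subsheaf. First I would observe that semistability of $(\Ee,\Phi)$ forces $\Phi\ne 0$ and that $\Phi$ must not preserve $\Oo_X(x_\Ee)=\Oo_X$, for otherwise $\Oo_X$ would be a $\Phi$-invariant subsheaf of positive slope relative to $\det(\Ee)=\Oo_X(-b)$, contradicting semistability. Composing $\Phi|_{\Oo_X}$ with the projection $\Ee\otimes T_X \to (\Ee/\Oo_X)\otimes T_X = \Ii_Z(-b)\otimes T_X$ therefore gives a nonzero map $\Oo_X \to \Ii_Z(-b)\otimes T_X$, hence a nonzero section of $\Ii_Z(-b)\otimes T_X \subset T_X(-b)$, so $H^0(T_X(-b))\ne 0$ with $b\ge 1$.

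Next I would invoke the assumed bound $H^0(T_X(-2))=0$ (valid since $X\ne\PP^1$), which immediately forces $b=1$, and then Wahl's theorem \cite{wahl}: $H^0(T_X(-1))\ne 0$ implies $X\cong\PP^n$. Since $\Pic(X)\cong\ZZ$ and the case $n=1$ is excluded, we get $n\ge 2$. At this point (\ref{eqss1}) reads
\begin{equation*}
0 \to \Oo_{\PP^n} \to \Ee \to \Ii_Z(-1) \to 0
\end{equation*}
with $Z$ of codimension $\ge 2$.

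The remaining task is to pin down $\Ee$ and $Z$, and this is where the main work lies. I would argue that $Z$ must be very small: since $\Ee$ is initialized with $c_1(\Ee)=-1$ and $x_\Ee = 0$, the sheaf $\Ii_Z(-1)$ is the torsion-free rank-one quotient, and one checks $c_2(\Ee)=\deg(Z)$ and $c_3(\Ee)$ (if $n\ge 3$) is determined by $Z$. The constraint comes from re-examining the co-Higgs field: $\Phi$ has kernel a rank-one reflexive subsheaf, necessarily $\cong\Oo_{\PP^n}(t)$ with $t\le 0$, and $\Image(\Phi)\cong\Ii_B(1-e)$ for suitable $B,e$ as in the proof of Proposition \ref{cc2}; chasing $\Phi\circ\Phi$ and $\Phi\wedge\Phi=0$ against the two-term filtration of $\Ee$ bounds $\deg(Z)$. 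I expect the cleanest route is: if $Z=\emptyset$ then the extension (\ref{eqss1}) splits or not — a nonsplit extension of $\Oo_{\PP^n}(-1)$ by $\Oo_{\PP^n}$ would have $H^1(\Oo_{\PP^n}(-1))=0$, so it splits, giving $\Ee\cong\Oo_{\PP^n}\oplus\Oo_{\PP^n}(-1)$. If $Z\ne\emptyset$, then one shows the only way $(\Ee,\Phi)$ can be semistable is $n=2$ with $\deg(Z)$ minimal, i.e. $Z$ a single point: for $n\ge 3$ or $\deg(Z)\ge 2$ the image/kernel analysis of $\Phi$ produces a $\Phi$-invariant line bundle of nonnegative degree, contradicting semistability.

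The main obstacle will be the last step — ruling out $n\ge 3$ with $Z\ne\emptyset$ and ruling out $\deg(Z)\ge 2$ when $n=2$. The delicate point is that $\Oo_{\PP^n}$ itself is $\Phi$-invariant is forbidden, but one must check no \emph{other} subsheaf of slope $\ge -1/2$ is $\Phi$-invariant either; this requires knowing the possible images of $\Phi$ explicitly, which in turn uses that $H^0(T_{\PP^n}(-1))$ and the Euler sequence control all maps out of $\Ii_Z(-1)$ into $T_{\PP^n}(-1)$, and that $Z$ being large creates sections of $\Ee(1)$ forcing a destabilizing invariant subsheaf through the Hartshorne–Serre picture of (\ref{eqcc2}).
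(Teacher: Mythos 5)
Your first two steps coincide with the paper's argument: semistability of $(\Ee,\Phi)$ forces $\Phi(\Oo_X)\nsubseteq\Oo_X\otimes T_X$, the composition with $v$ gives a nonzero map $\Oo_X\to\Ii_Z\otimes T_X(-b)$, and then $H^0(T_X(-2))=0$ together with Wahl's theorem yields $b=1$ and $X\cong\PP^n$, $n\ge 2$. The problem is your last step, which you yourself flag as ``the main obstacle'': you never actually rule out $n\ge 3$ with $Z\ne\emptyset$, or $\deg(Z)\ge 2$ when $n=2$. The program you sketch for doing so (chasing $\Phi\circ\Phi$ and $\Phi\wedge\Phi=0$ through the filtration, classifying all $\Phi$-invariant subsheaves of slope $\ge -1/2$, invoking Hartshorne--Serre) is not carried out and is far heavier than what is needed; as written it is a description of a difficulty, not a proof.

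The missing observation is already contained in what you proved: the nonzero map $\Oo_{\PP^n}\to\Ii_Z\otimes T_{\PP^n}(-1)$ \emph{is} a nonzero section of $T_{\PP^n}(-1)$ vanishing on $Z$, and the zero scheme of any nonzero section of $T_{\PP^n}(-1)$ is a single (reduced) point. Hence $Z$ is contained in a point. Since $\Ee$ is reflexive, $Z$ is either empty or of pure codimension two, so either $Z=\emptyset$, or $n=2$ and $Z$ is exactly that point. When $Z=\emptyset$ the sequence (\ref{eqss1}) splits because $\Ext^1(\Oo_{\PP^n}(-1),\Oo_{\PP^n})\cong H^1(\Oo_{\PP^n}(1))=0$ (you cite $H^1(\Oo_{\PP^n}(-1))$, which is the wrong group, though it also vanishes). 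No analysis of other invariant subsheaves, of $c_3$, or of the Hartshorne--Serre picture is required.
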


\begin{proof}
Since $\Ee$ is reflexive, either $Z=\emptyset$ or $Z$ has pure codimension $2$. From (\ref{eqss1}) we get an exact sequence
\begin{equation}\label{eqss2}
0 \to \Oo _X\otimes T_X \to \Ee\otimes T_X \stackrel{v}{\to} \Ii _Z\otimes T_X(-b)\to 0.
\end{equation}
Since $(\Ee ,\Phi)$ is semistable, we have $\Phi (\Oo _X)\nsubseteq \Oo _X\otimes T_X$ and so $v\circ \Phi : \Oo _X\rightarrow \Ii _Z\otimes T_X(-b)$ is a non-zero map. Since $X\not\cong \PP^1$ by \cite{wahl}, we have $X\cong \PP^n$ with $n\ge 2$ and $b=1$. We also get $h^0(\Ii _Z\otimes T_X)>0$. The zero-locus of each non-zero section of $T_X(-1)$ is a single point. Hence we have either $Z=\emptyset$, or $n=2$ and $Z$ is a single point. If $Z =\emptyset$, then (\ref{eqss1}) gives $\Ee \cong \Oo _{\PP^n}\oplus \Oo _{\PP^n}(-1)$.
\end{proof}

Recall that $x_{\Ee}$ depends only on the isomorphism class of $\Ee$; see (\ref{xxe}). For any $\Ee$ fitting into (\ref{eqd1}) with $Z$ satisfying $h^0(\Ii _Z(c_1-x_{\Ee}-1)) =0$, we know that $\Ee$ is stable (resp. semistable) if and only if $2x_{\Ee}<c_1$ (resp. $2x_{\Ee} \le c_1$). For a fixed $\Ee$, the same subscheme $Z\subset X$ may occur only by proportional sections in $H^0(\Ee (-x))$ by \cite[Proposition 1.3]{hart}. Define $y_{\Ee}$ to be 
$$\mathrm{min}\{ y\ge 0~|~h^0(\Ii _Z(c_1-x_{\Ee}+y)) >0\}.$$
Note that $y_{\Ee} =0$ if and only if $\Ee$ has at least two non-proportional maps $\Oo _X(x)\rightarrow \Ee$ and so fits in at least two non-proportional sequences (\ref{eqd1}), with different subschemes $Z$. Thus in all cases the integer $y_{\Ee}$ is well-defined.

\begin{lemma}\label{d1}
Let $\Ee$ be a reflexive sheaf of rank two on $X$ with $c_1-2x_{\Ee} >0$. Then we have $h^0(\mathcal{E}nd (\Ee)(z)) = h^0(\Oo _X(z))$ for $0\le z < \min \{x_{\Ee} +y_{\Ee}, c_1-2x_{\Ee}\}$.
\end{lemma}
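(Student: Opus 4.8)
The plan is to reduce the statement to the claim that every global homomorphism $\Psi:\Ee\to\Ee(z)$ is a homothety, i.e.\ equals $f\cdot\mathrm{id}$ for some $f\in H^0(\Oo_X(z))$. The homotheties already contribute a subspace of $H^0(\mathcal{E}nd(\Ee)(z))$ of dimension exactly $h^0(\Oo_X(z))$: they are pairwise distinct since $\Ee$ is torsion-free and the section $\sigma\in H^0(\Ee(-x_{\Ee}))$ defining (\ref{eqd1}) is nonzero, so $f\cdot\sigma=0$ forces $f=0$. Hence it suffices to show there is nothing more, and to this end I would study the restriction map
\[
\rho:\Hom(\Ee,\Ee(z))\longrightarrow\Hom(\Oo_X(x_{\Ee}),\Ee(z))\cong H^0(\Ee(z-x_{\Ee}))
\]
obtained by precomposing with $\Oo_X(x_{\Ee})\hookrightarrow\Ee$, and prove that $\rho$ is injective with image precisely $\{\,f\cdot\sigma\mid f\in H^0(\Oo_X(z))\,\}$. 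Since $\rho(f\cdot\mathrm{id})=f\cdot\sigma$, this identifies $\Hom(\Ee,\Ee(z))$ with $H^0(\Oo_X(z))$ and gives the lemma. (One could alternatively phrase everything through the trace splitting $\mathcal{E}nd(\Ee)=\Oo_X\oplus\mathcal{E}nd_0(\Ee)$ and the vanishing of $H^0(\mathcal{E}nd_0(\Ee)(z))$, but the above avoids any characteristic hypothesis.)

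For injectivity: a homomorphism $\Psi$ killing $\Oo_X(x_{\Ee})$ factors through $\Ee/\Oo_X(x_{\Ee})\cong\Ii_Z(c_1-x_{\Ee})$, so it gives $\bar\Psi:\Ii_Z(c_1-x_{\Ee})\to\Ee(z)$. Because $Z$ has pure codimension two and $\Ee(z)$ is reflexive (hence $S_2$), $\bar\Psi$ prolongs across $Z$ to $\Oo_X(c_1-x_{\Ee})\to\Ee(z)$, that is, to a section of $\Ee(z-c_1+x_{\Ee})$. The hypothesis $z<c_1-2x_{\Ee}$ gives $c_1-x_{\Ee}-z>x_{\Ee}$, so this twist is too negative to have sections by maximality of $x_{\Ee}$; thus $\Psi=0$. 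For the image: given any $\Psi$, I would compose $\rho(\Psi)\in H^0(\Ee(z-x_{\Ee}))$ with the quotient $v:\Ee\to\Ii_Z(c_1-x_{\Ee})$ to land in $H^0(\Ii_Z(c_1-2x_{\Ee}+z))$, show that this group vanishes, and then read off from the sequence (\ref{eqd1}) twisted by $z-x_{\Ee}$ (whose left term is $\Oo_X(z)$ and whose right term is $\Ii_Z(c_1-2x_{\Ee}+z)$) that $\rho(\Psi)$ lies in the image of $H^0(\Oo_X(z))$, i.e.\ $\rho(\Psi)=f\cdot\sigma$; combined with injectivity, $\Psi=f\cdot\mathrm{id}$.

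The hard part will be the vanishing $H^0(\Ii_Z(c_1-2x_{\Ee}+z))=0$, which is exactly where the bound $z<x_{\Ee}+y_{\Ee}$ enters. Writing $c_1-2x_{\Ee}+z=(c_1-x_{\Ee})+(z-x_{\Ee})$, one has $z-x_{\Ee}<y_{\Ee}$, and I would combine the defining property of $y_{\Ee}$ (no sections of $\Ii_Z(c_1-x_{\Ee}+y)$ for $0\le y<y_{\Ee}$) with the normalization recorded after (\ref{eqd1}), namely $h^0(\Ii_Z(c_1-x_{\Ee}-1))=0$, and the monotonicity of $h^0(\Ii_Z(m))$ in $m$ (valid since $\Oo_X(1)$ is very ample). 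The delicate point is to check that these two ranges — the twists below $c_1-x_{\Ee}$ and the twists $c_1-x_{\Ee}+y$ with $0\le y<y_{\Ee}$ — together account for every $m\le c_1-x_{\Ee}+y_{\Ee}-1$, so that the relevant twist $c_1-2x_{\Ee}+z$ is always covered. The extension-across-codimension-two step used for injectivity is routine, being the standard statement on prolonging homomorphisms into reflexive/$S_2$ sheaves (cf.\ \cite[Proposition 1.6]{Hartshorne1} and \cite[Proposition 1.3]{hart}).
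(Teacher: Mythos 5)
Your argument is correct and is essentially the paper's own proof: both compose an endomorphism with the quotient map of (\ref{eqd1}), kill the resulting section of $\Ii_Z(c_1-2x_{\Ee}+z)$ using the definition of $y_{\Ee}$ together with $h^0(\Ii_Z(c_1-x_{\Ee}-1))=0$ and monotonicity, and then kill the residual map by identifying it with a section of $\Ee(x_{\Ee}+z-c_1)$, which vanishes by maximality of $x_{\Ee}$ since $z<c_1-2x_{\Ee}$. The only cosmetic difference is that you organize this via the restriction map to $\Hom(\Oo_X(x_{\Ee}),\Ee(z))$ and extend across $Z$ by reflexivity, where the paper subtracts the homothety $g\cdot\mathrm{id}$ and invokes $\Ee^\vee\cong\Ee(-c_1)$; the content is identical.
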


\begin{proof}
Set $x:= x_{\Ee}$ and $y:=y_{\Ee}$, and assume that $\Ee$ fits in (\ref{eqd1}) for some $Z$. Fix $f\in \Hom (\Ee ,\Ee (z))$ and let $f_1: \Ee \rightarrow \Ii _Z(c_1-x+z)$ be the map obtained by composing $f$ with the map $ \Ee(z) \rightarrow \Ii _Z(c_1-x+z)$ twisted from (\ref{eqd1}) with $\Oo _X(z)$. From the assumption $z< x+y$, we have $f_1(\Oo _X(x)) =0$ and so $f$ induces $f_2: \Ii _Z(c_1-x) \rightarrow \Ii _Z(c_1-x+z)$. Now take $g\in H^0(\Oo _X(z))$ inducing $f_2$ and let $\gamma : \Ee \rightarrow \Ee (z)$ be obtained by the multiplication by $g$. Our claim is that $f =\gamma$. Taking $f-\gamma$ instead of $f$ we reduce to the case $g=0$ and in this case we need to prove that $f=0$, when we have $f(\Ee )\subseteq \Oo _X(x+z)$. Since $\Ee$ is reflexive of rank two, we have $\Ee ^\vee \cong \Ee (-c_1)$. Thus $f: \Ee \rightarrow \Oo _X(x+z)$ is induced by a unique $a\in H^0(\Ee (x+z-c_1))$. Since $x+z-c_1< - x$, we have $a=0$ and so $f=0$.
\end{proof}

\begin{proposition}\label{d2}
If $\Ee$ is a reflexive sheaf of rank two on $X$ with 
$$\mathrm{min}\{x_{\Ee} +y_{\Ee}, c_1(\Ee )-2x_{\Ee} \}\ge 3,$$ 
then it has no non-zero trace-free co-Higgs field, not even a non-integrable one.
\end{proposition}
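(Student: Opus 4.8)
The plan is to show that any trace-free map $\Phi:\Ee\to\Ee\otimes T_X$ must actually be a scalar multiple of a vector field, hence zero since $H^0(T_X)$ contributes nothing to trace-free maps; more precisely, I want to apply Lemma~\ref{d1} with a suitable twist to conclude that $\Phi$ lies in $H^0(\Oo_X\otimes T_X)$, and then separately rule out that source. The key point is that $T_X$ is controlled by $\Omega^1_X(2)$, which is globally generated: an element $\Phi\in H^0(\mathcal{E}nd(\Ee)\otimes T_X)$ can be viewed, after tensoring the globally generated surjection $\Oo_X^{\oplus N}\onto \Omega^1_X(2)$ and dualizing, as giving rise to maps $\Ee\to\Ee(2)$. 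So $\Phi$ determines an element of $\Hom(\Ee,\Ee(2))^{\oplus N}$, or rather $\Phi$ corresponds to a section of $\mathcal{E}nd(\Ee)\otimes T_X$ and we embed $T_X\into \Oo_X(2)^{\oplus N}$ via the dual of that surjection. Then $\Phi$ is captured by $N$ elements of $H^0(\mathcal{E}nd(\Ee)(2))$.

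The first step is therefore: since $\Omega^1_X(2)$ is globally generated (standard for any smooth projective $X$ with $\Oo_X(1)$ very ample, via the Euler-type sequence on $\PP^n$ restricted to $X$, or one just cites this), we get a surjection $\Oo_X^{\oplus N}\onto\Omega^1_X(2)$, hence an injection $T_X(-2)\into\Oo_X^{\oplus N}$, i.e.\ $T_X\into\Oo_X(2)^{\oplus N}$. Tensoring with $\mathcal{E}nd(\Ee)$ gives $\mathcal{E}nd(\Ee)\otimes T_X\into\mathcal{E}nd(\Ee)(2)^{\oplus N}$, so $H^0(\mathcal{E}nd(\Ee)\otimes T_X)\into H^0(\mathcal{E}nd(\Ee)(2))^{\oplus N}$. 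The second step is to compute $h^0(\mathcal{E}nd(\Ee)(2))$: by Lemma~\ref{d1} with $z=2$, the hypothesis $\min\{x_{\Ee}+y_{\Ee},\,c_1-2x_{\Ee}\}\ge 3$ (strictly bigger than $2$) gives $h^0(\mathcal{E}nd(\Ee)(2))=h^0(\Oo_X(2))$, so every component of the image of $\Phi$ is multiplication by a section of $\Oo_X(2)$.

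The third step is to assemble these $N$ sections back into a single global section of $\mathcal{E}nd(\Ee)\otimes T_X$ that is a homothety. Concretely, the inclusion $\mathcal{E}nd(\Ee)\otimes T_X\into \mathcal{E}nd(\Ee)(2)^{\oplus N}$ factors through $\Oo_X\otimes T_X\hookrightarrow\Oo_X(2)^{\oplus N}$ once we know each component of $\Phi$ is scalar: indeed the subsheaf $\Oo_X\cdot\mathrm{id}_\Ee\subset\mathcal{E}nd(\Ee)$ is exactly where the scalar endomorphisms live, and the diagram $\Oo_X\otimes T_X\into\Oo_X(2)^{\oplus N}$, $\mathcal{E}nd(\Ee)\otimes T_X\into\mathcal{E}nd(\Ee)(2)^{\oplus N}$ is compatible with $\Oo_X\cdot\mathrm{id}\into\mathcal{E}nd(\Ee)$ tensored with $T_X\into\Oo_X(2)^{\oplus N}$. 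So $\Phi\in H^0(\Oo_X\otimes T_X)=H^0(T_X)$, viewed inside $\mathcal{E}nd(\Ee)\otimes T_X$ as $\xi\mapsto\xi\cdot\mathrm{id}_\Ee$. But such a $\Phi$ has trace $2\xi\neq 0$ whenever $\xi\neq 0$ (characteristic zero), so the trace-free hypothesis forces $\Phi=0$. Note this argument never uses $\Phi\wedge\Phi=0$, which is why the statement covers non-integrable fields as well.

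The step I expect to be the main obstacle is the third one: making precise that the identification "each component is scalar" globalizes to "$\Phi$ is a global homothety valued in $T_X$," rather than merely that $\Phi$ is \emph{locally} scalar. The subtlety is that a priori $\Phi$ is a section of $\mathcal{E}nd(\Ee)\otimes T_X$, not of $\mathcal{E}nd(\Ee)\otimes\Oo_X(2)^{\oplus N}$, so one must check that the scalar sections $s_1,\dots,s_N\in H^0(\Oo_X(2))$ obtained componentwise are the images of a single section of $T_X$ under $T_X\into\Oo_X(2)^{\oplus N}$, i.e.\ that $\Phi$ lands in the correct subsheaf. This follows because the inclusion $H^0(\mathcal{E}nd(\Ee)\otimes T_X)\into H^0(\mathcal{E}nd(\Ee)(2))^{\oplus N}$ has image contained in $H^0(T_X)^{\oplus\cdot}$ precisely along the sub-bundle $T_X$, and we have just shown the image lies in the scalar locus $H^0(\Oo_X(2))^{\oplus N}$; the intersection inside $H^0(\Oo_X(2)^{\oplus N})$ of $H^0(T_X)$'s image with the diagonal scalar copy is $H^0(\Oo_X\otimes T_X)$'s image. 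Once this bookkeeping is done, the conclusion is immediate.
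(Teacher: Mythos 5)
Your proposal is correct and follows essentially the same route as the paper: embed $T_X$ into $\Oo_X(2)^{\oplus N}$ via the globally generated $\Omega^1_X(2)$, apply Lemma \ref{d1} with $z=2$ (legitimate since the hypothesis gives $\min\ge 3>2$) to see each induced component $\Phi_i:\Ee\to\Ee(2)$ is multiplication by some $f_i\in H^0(\Oo_X(2))$, and then use the trace to kill the $f_i$. The paper handles your ``third step'' bookkeeping more directly by noting $2f_i=g_i$ where the $g_i$ are the components of $\mathrm{Tr}(\Phi)$ under $T_X\subset\Oo_X(2)^{\oplus N}$, so trace-freeness gives $f_i=0$ and hence $\Phi=0$ by injectivity of $H^0(\mathcal{E}nd(\Ee)\otimes T_X)\to H^0(\mathcal{E}nd(\Ee)(2))^{\oplus N}$; this is the same content as your homothety argument.
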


\begin{proof}
Take any map $\Phi : \Ee \rightarrow \Ee \otimes T_X$. Since $\Oo _X(1)$ is very ample, $\Omega ^1_X(2)$ is spanned and so $T_X$ is a subsheaf of $\Oo _X(2)^{\oplus N}$, where $N=h^0(\Omega_X^1(2))$. Thus $\Phi$ induces $N$ elements $\Phi _i: \Ee \rightarrow \Ee (2)$ with $i=1,\dots ,N$. By Lemma \ref{d1} each $\Phi _i$ is induced by $f_i\in H^0(\Oo _X(2))$. Composing the trace map $\mathrm{Tr}(\Phi ): \Oo _X\rightarrow T_X$ of $\Phi$ with the inclusion $T_X\subset \Oo _X(2)^{\oplus N}$, we also get $N$ elements $g_i\in H^0(\Oo _X(2))$. Note that we have $2f_i=g_i$ for all $i$. If $\Phi$ is trace-free, then we get $g_i=0$ and so $f_i=0$ for all $i$. Thus $\Phi$ is trivial.
\end{proof}


\subsection{Case $X=\PP^2$}
For $(c_1,c_2)\in \ZZ^{\oplus 2}$, let $\mathbf{M}_{\PP^2}(c_1,c_2)$ denote the moduli space of stable vector bundles of rank two on $\PP^2$ with Chern numbers $(c_1,c_2)$. Schwarzenberger proved that $\mathbf{M}_{\PP^2}(c_1,c_2)$ is non-empty if and only if $ -4\ne c_1^2-4c_2 < 0$; see \cite[Lemma 3.2]{hart}. When non-empty, $\mathbf{M}_{\PP^2}(c_1,c_2)$ is irreducible; see \cite{maru,barth,hulek,lepo}. For $\Ee\in \mathbf{M}_{\PP^2}(c_1,c_2)$ and any $t\in \ZZ$, we have 
\begin{align*}
c_1(\Ee (t)) &=c_1+2t,\\
c_2(\Ee (t)) &=c_2+t^2+tc_1, \\
\chi (\Ee (t)) &= (c_1+2t+2)(c_1+2t+1)/2 +1 -c_2-t^2-t(c_1+t)\\
                   &= c_1(c_1+2t+3)/2 +(t+1)(t+2)-c_2;
\end{align*}
see \cite[page 469]{b}. Up to a twist we may assume that $c_1\in \{-1,0\}$. Since $\Ee$ is stable, we have $h^0(\Ee )=0$ and so $x_{\Ee} <0$. Define an integer $\alpha(c_1, c_2)$ as 
$$\alpha (c_1,c_2):=\mathrm{min}\{t\in \ZZ_{>0}~|~c_1(c_1+2t+3)/2 +(t+1)(t+2) >c_2\}.$$
For any $\Ee \in \mathbf{M}_{\PP^2}(c_1,c_2)$, we have $\chi (\Ee (a)) >0$ for all $a\ge \alpha (c_1,c_2)$, and $\alpha (c_1,c_2)$ is the minimal positive integer with this property; see \cite[Proposition 7.1]{hart}. By \cite[Theorem 5.1]{b}, a general bundle $\Ee \in \mathbf{M}_{\PP^2}(c_1,c_2)$ has $x_{\Ee} = -\alpha (c_1,c_2)$ and $h^1(\Ee (t)) =0$ for all $t\ge \alpha (c_1,c_2)$. By Proposition \ref{cc2}, if $c_1$ is even, no bundle $\Ee\in \mathbf{M}_{\PP^2}(c_1,c_2)$ has a non-zero nilpotent map $\Phi : \Ee \rightarrow \Ee \otimes T_X$. If $c_1$ is odd, we have the following.

\begin{proposition}\label{++}
Let $\Ee$ be s general element of $\mathbf{M}_{\PP^2}(-1,c_2)$ with $c_2\ge 4$. If $\Phi : \Ee \rightarrow \Ee \otimes T_{\PP^2}$ is a nilpotent map, then we have $\Phi =0$.
\end{proposition}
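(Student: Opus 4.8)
The plan is to reduce the statement immediately to Proposition \ref{cc1}, whose hypothesis $x_{\Ee}\le -2$ is exactly what the bound $c_2\ge 4$ produces for a \emph{general} bundle. First I would recall, from the discussion preceding the statement, that by \cite[Theorem 5.1]{b} a general $\Ee\in \mathbf{M}_{\PP^2}(-1,c_2)$ satisfies $x_{\Ee}=-\alpha(-1,c_2)$, where
$$\alpha(-1,c_2)=\mathrm{min}\{\,t\in \ZZ_{>0}\mid (t+1)^2>c_2\,\}.$$
This closed form comes from substituting $c_1=-1$ into the defining inequality $c_1(c_1+2t+3)/2+(t+1)(t+2)>c_2$, which simplifies to $-(t+1)+(t+1)(t+2)>c_2$, i.e. $(t+1)^2>c_2$.

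Next I would do the elementary arithmetic pinning down the threshold. For $t=1$ the inequality $(t+1)^2>c_2$ reads $4>c_2$, which fails as soon as $c_2\ge 4$; hence $\alpha(-1,c_2)\ge 2$ for every $c_2\ge 4$, and therefore $x_{\Ee}=-\alpha(-1,c_2)\le -2$ for a general $\Ee\in \mathbf{M}_{\PP^2}(-1,c_2)$.

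Finally, since such a general $\Ee$ is a reflexive (in fact locally free) sheaf of rank two on $\PP^2$ with $c_1(\Ee)=-1\in\{-1,0\}$ and $x_{\Ee}\le -2$, Proposition \ref{cc1} applies verbatim and yields that every nilpotent map $\Phi\colon\Ee\to\Ee\otimes T_{\PP^2}$ is trivial, i.e. $\Phi=0$.

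I do not expect any genuine obstacle: the argument is just the concatenation of the known value $x_{\Ee}=-\alpha(-1,c_2)$ for general $\Ee$, the arithmetic evaluation of $\alpha(-1,c_2)$, and Proposition \ref{cc1}. The only point worth a second look is the boundary case $c_2=4$, where $\alpha(-1,4)=2$ exactly, so $x_{\Ee}=-2$ and Proposition \ref{cc1} is still (barely) applicable; this is precisely why the hypothesis is stated as $c_2\ge 4$ and not $c_2\ge 3$, since for $c_2=3$ one would only get $x_{\Ee}=-1$, in which regime non-trivial nilpotent fields do occur, consistently with Remark \ref{cc0} and Theorem \ref{ttt1}.
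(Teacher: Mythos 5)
Your proof is correct and follows essentially the same route as the paper: the paper likewise reduces to showing $x_{\Ee}\le -2$ via Proposition \ref{cc1}, and obtains this from $\chi(\Ee(1))=4-c_2\le 0$ together with \cite[Theorem 5.1]{b}, which is the same computation you perform in the equivalent form $\alpha(-1,c_2)\ge 2$. Your extra check of the boundary case $c_2=4$ and the failure at $c_2=3$ is a correct and harmless addition.
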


\begin{proof}
By Proposition \ref{cc1} it is sufficient to prove that $x_\Ee \le -2$, i.e. $h^0(\Ee (1)) =0$. Note that $\chi (\Ee (1)) = 4-c_2 \le 0$ and so we may apply \cite[Theorem 5.1]{b}.
\end{proof}

For any $x\in \ZZ$, let $\mathbf{M}_{\PP^2}(c_1,c_2,x)$ denote the set of all $\Ee \in \mathbf{M}_{\PP^2}(c_1,c_2)$ with $x_\Ee =x$. It is an irreducible family and we have a description of the nilpotent co-Higgs fields on each bundle in $\mathbf{M}_{\PP^2}(c_1, c_2, x)$; see Theorem \ref{ttt1}.

\begin{remark}\label{+++}
Any $\Ee \in \mathbf{M}_{\PP^2}(-1,c_2)$ with $\Ee(-1)$ as in Lemma \ref{b1} and (\ref{eqb1}) for $\Aa \cong \Oo_{\PP^2}(1)$ occurs in an exact sequence
\begin{equation}\label{eqboo1}
0 \to \Oo _{\PP^2}(-1)\to \Ee \to \Ii _Z \to 0
\end{equation}
with $Z$ a locally complete intersection scheme $Z\subset \PP^2$ with $\deg (Z)=c_2(\Ee)$, using that $\deg (Z) =c_2(\Ee (1))$ by \cite[Corollary 2.2]{Hartshorne1}. Since $Z$ is not empty, every vector bundle fitting into (\ref{eqboo1}) is stable and so we have $x_\Ee =-1$. The general element of $\mathbf{M}_{\PP^2}(-1,c_2,-1)$ admits an extension (\ref{eqboo1}) with as $Z$ the general
subset of $\PP^2$ with cardinality $c_2$.
\end{remark}

For a general stable vector bundle of rank two on $\PP^2$, we have $y_{\Ee}\le 1$ by \cite{b} and so we cannot use Proposition \ref{d2} for it. We prove Theorem \ref{ttt4} using the following key observation.

\begin{remark}\label{d3}
Take an irreducible family $\Gamma$ of reflexive sheaves of rank two on $X$. Let $\Gg$ denote the general element of $\Gamma$. Assume the existence of some $\Ee\in \Gamma$ with $c_1(\Ee )-2x_{\Ee} \ge 3$ and $y_{\Ee} +x_{\Ee} \ge 3$. By Lemma \ref{d1} we have $h^0(\mathcal{E}nd (\Ee )(2)) =h^0(\Oo _X(2))$, which is the minimum possibility for $h^0(\mathcal{E}nd(\Gg)(2))$ with $\Gg$ reflexive of rank two on $X$, i.e. $H^0(\mathcal{E}nd (\Ee )(2))$ has the minimal dimension among all reflexive sheaves of rank two on $X$. By the semicontinuity theorem we have $h^0(\mathcal{E}nd (\Gg )(2)) =h^0(\Oo _X(2))$. Thus we may apply the proof of Proposition \ref{d2} to $\Gg$, even when $\Gg$ does not satisfy the assumptions of Proposition \ref{d2}.
\end{remark}

\begin{proof}[Proof of Theorem \ref{ttt4}:]
The proof of Proposition \ref{d2} shows that it is enough to prove $h^0(\mathcal{E}nd (\Ee )(2)) =6$. And by semicontinuity it is also sufficient to prove that $h^0(\mathcal{E}nd (\Gg )(2)) =6$ for some $\Gg \in \mathbf{M}_{\PP^2}(c_1,c_2)$. Furthermore, by Lemma \ref{d1} it is sufficient to find $\Gg \in \mathbf{M}_{\PP^2}(c_1,c_2)$ with $x_\Gg = -2$ and $y_\Gg \ge 5$. 

Now take a general $S\subset \PP^2$ with $\sharp (S) = c_2 +4 +2c_1$ and let $\Gg$ be a general sheaf fitting into
$$0 \to \Oo _{\PP^2}(-2) \to \Gg \to \Ii _S(c_1+2)\to 0.$$
By Bogomolov inequality we have $4c_2>c_1^2$. We have $h^0(\Ii _{S\setminus \{p\}}(c_1+1)) =0$ for $p\in S$ and so the Cayley-Bacharach condition is satisfied. Thus $\Gg$ is locally free. We also have $h^0(\Gg(1))=0$ from $h^0(\Ii _S(c_1+1)) =0$, and so we have $x_\Gg =-2$. On the other hand, we have $\sharp (S) > \binom{c_1+8}{2}$, we have $h^0(\Ii _S(c_1+6))=0$ and so $y_\Gg \ge 5$. Now we may use Remark \ref{d3} and the irreducibility of $\mathbf{M}_{\PP^2}(c_1,c_2)$.
\end{proof}


\subsection{Case $X=\PP^3$ and $r\ge 3$}
We look at locally free sheaves $\Ee$ of rank at least three on $\PP^3$ fitting into (\ref{eqb1}) with either $\Aa \cong \Oo _{\PP^3}$ or $\Aa \cong \Oo _{\PP^3}(1)$. By Lemma \ref{b1} any such a sheaf $\Ee$ has a $2$-nilpotent $\Phi : \Ee\rightarrow \Ee \otimes T_{\PP^3}$ with $\mathrm{ker}(\Phi) \cong \Oo _{\PP^3}^{\oplus (r-1)}$. If $\Aa \cong \Oo _{\PP^3}$, then any torsion-free $\Ee$ fitting into (\ref{eqb1}) is strictly slope-semistable. Note also that if $Z$ is empty in (\ref{eqb1}),
then $\Ee\cong \Oo _{\PP^3}^{\oplus (r-1)}\oplus \Aa$ and that $\deg (Z) =c_2(\Ee)$. In particular if $\Ee$ is not a direct sum of line bundles, then we have $c_2(\Ee)>0$.

\begin{lemma}\label{zz0}
Let $\Ee$ be a reflexive sheaf of rank three fitting into (\ref{eqb1}) with $\Aa \cong \Oo _{\PP^3}(1)$. Then the followings are equivalent. 
\begin{itemize}
\item [(i)] $\Ee$ is slope-semistable;
\item [(ii)] $\Ee$ is slope-stable;
\item [(iii)] $\Ee$ has no trivial factor.
\end{itemize}
\end{lemma}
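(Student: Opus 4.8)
The plan is to prove the cycle of implications (i)$\Rightarrow$(iii)$\Rightarrow$(ii)$\Rightarrow$(i), with the implication (iii)$\Rightarrow$(ii) carrying the real content. The implication (ii)$\Rightarrow$(i) is trivial, and (i)$\Rightarrow$(iii) is easy: a trivial factor $\Oo_{\PP^3}\subset\Ee$ is a rank-one subsheaf of degree $0$, contradicting slope-semistability of a rank-three sheaf with $c_1(\Ee)=1$ (so slope $1/3>0$). So the work is to show that if $\Ee$ has no trivial factor, then it is slope-stable.

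For (iii)$\Rightarrow$(ii), I would argue by contradiction along the lines already used in Examples \ref{b3} and \ref{b4}. Suppose $\Ee$ is not stable and pick a saturated subsheaf $\Gg\subset\Ee$ of rank $s\in\{1,2\}$ with $\deg(\Gg)\ge\mu(\Ee)=1/3$, hence $\deg(\Gg)\ge 1$; since $\Ee/\Gg$ is then torsion-free, $\Gg$ is reflexive. The subsheaf $u(\Oo_{\PP^3}^{\oplus 2})$ of \eqref{eqb1} is trivial of rank two, so $\deg(\Gg\cap u(\Oo_{\PP^3}^{\oplus 2}))\le 0$; hence the composite $\Gg\to\Ii_Z\otimes\Oo_{\PP^3}(1)$ via $v$ is nonzero, and its image is $\Ii_W(1)$ for some $W\supseteq Z$ of dimension $\le 1$. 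In the case $s=1$: $\Gg$ is reflexive of rank one, hence a line bundle, and the nonzero map $\Gg\to\Ii_Z(1)$ forces $\Gg\cong\Oo_{\PP^3}(d)$ with $d\le 1$; combined with $\deg(\Gg)\ge 1$ this gives $\Gg\cong\Oo_{\PP^3}(1)$, and then the composite $\Oo_{\PP^3}(1)\to\Ii_Z(1)$ is an inclusion with torsion-free quotient forcing $Z=\emptyset$, contradicting the hypothesis that (since $\Ee$ has no trivial factor) $Z\neq\emptyset$. In the case $s=2$: since $h^0(\Ee)=2$ and $u(\Oo_{\PP^3}^{\oplus 2})$ is exactly the image of the evaluation map $H^0(\Ee)\otimes\Oo_{\PP^3}\to\Ee$, and $\Gg\not\cong\Oo_{\PP^3}^{\oplus 2}$ (else $\deg(\Gg)=0$), we get $h^0(\Gg)\le 1$, hence $h^0(\Ee/\Gg)>0$; as $\Ee/\Gg$ is torsion-free of rank one it must be $\Oo_{\PP^3}$, and any section of $\Ee$ lifting $1\in H^0(\Oo_{\PP^3})$ splits off a trivial factor, a contradiction.

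I expect the main obstacle to be the bookkeeping in the $s=1$ case, specifically ruling out subsheaves $\Gg$ that are line bundles of degree exactly $1$ but do \emph{not} embed compatibly with the trivial subbundle — one must use that $\Oo_{\PP^3}(1)\not\subset\Oo_{\PP^3}^{\oplus 2}$ together with the structure of \eqref{eqb1} to show the only possibility forces $Z=\emptyset$. A secondary subtlety is making sure the reflexivity of $\Gg$ is used correctly: in rank one over $\PP^3$ a reflexive sheaf is a line bundle, which is exactly what reduces the $s=1$ analysis to maps between line bundles and powers of $\Oo_{\PP^3}$. Once these two points are handled, the three implications close up into the stated equivalence.
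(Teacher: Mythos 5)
Your overall strategy coincides with the paper's: the content is in (iii)$\Rightarrow$(ii), proved by taking a destabilizing saturated subsheaf $\Gg$ of rank $s\in\{1,2\}$, using $\deg(\Gg\cap u(\Oo_{\PP^3}^{\oplus 2}))\le 0$ to force $v(\Gg)\ne 0$, and splitting into the cases $s=1$ and $s=2$. Your $s=1$ case is fine and matches the paper (a nonzero map $\Oo_{\PP^3}(1)\to\Ii_Z(1)$ is a section of $\Ii_Z$, so $Z=\emptyset$ and then the extension splits, giving a trivial factor). But two of your steps do not hold as written. First, your justification of (i)$\Rightarrow$(iii) is wrong: a rank-one subsheaf of degree $0$ has slope $0<1/3=\mu(\Ee)$ and therefore does \emph{not} violate semistability --- indeed every $\Ee$ in (\ref{eqb1}), including the stable ones, contains $u(\Oo_{\PP^3}^{\oplus 2})$, so your argument would ``prove'' that no such $\Ee$ is semistable. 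The correct point, which is what the paper uses, is that a splitting $\Ee\cong\Oo_{\PP^3}\oplus\Ff$ exhibits the rank-two subsheaf $\Ff$ of slope $1/2>1/3$ (equivalently, the quotient $\Ee\to\Oo_{\PP^3}$ has slope below $\mu(\Ee)$).

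Second, in the $s=2$ case you invoke $h^0(\Ee)=2$ and the identification of $u(\Oo_{\PP^3}^{\oplus 2})$ with the image of the evaluation map. From (\ref{eqb1}) one has $h^0(\Ee)=2+h^0(\Ii_Z(1))$, so this requires $h^0(\Ii_Z(1))=0$, i.e.\ $Z$ not contained in a plane --- an assumption made in Example \ref{b4} but \emph{not} in the statement of Lemma \ref{zz0}. Without it, $h^0(\Gg)\le 1$ can fail and your deduction $h^0(\Ee/\Gg)>0$ collapses. The paper sidesteps this: from the degree bookkeeping $\Gg$ is an extension of $\Ii_W(1)$ ($W\supseteq Z$) by $\Oo_{\PP^3}$, so $\Gg\cap u(\Oo_{\PP^3}^{\oplus 2})\cong\Oo_{\PP^3}$ has rank one, hence the composite $u(\Oo_{\PP^3}^{\oplus 2})\to\Ee\to\Ee/\Gg$ has rank-one image with a nonzero section, giving $h^0(\Ee/\Gg)>0$ unconditionally; alternatively you could repair your count via $h^0(\Gg)\le 1+h^0(\Ii_W(1))\le 1+h^0(\Ii_Z(1))<h^0(\Ee)$. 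Both defects are local and fixable, but as written the proof does not close.
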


\begin{proof}
Assume that $\Ee$ has a saturated subsheaf $\Gg$ of rank $s<3$ with $\deg (\Gg )/s\ge 1/3$. 

If $s=1$, then we have $\Gg \cong \Oo _{\PP^3}(t)$ for some $t>0$, because $\Ee$ is reflexive and $\Ee/\Gg$ has no torsion (see \cite[Propositions 1.1 and 1.9]{Hartshorne1}). Then we have $\Gg \nsubseteq u(\Oo _{\PP^3}^{\oplus 2})$ and so $v(\Oo _{\PP^3}(t))$ is a non-zero subsheaf of $\Ii _Z(1)$. In particular, we get $t=1$ and $Z=\emptyset$. Thus we have $\Ee \cong \Oo _{\PP^3}(1)\oplus \Oo _{\PP^3}^{\oplus 2}$. 

Now assume $s=2$. Again $v(\Gg )$ is a non-zero subsheaf of $\Ii _Z(1)$ and so we get $\deg (\Gg) =1$ and that $\Gg$ is an extension of some $\Ii _W(1)$ with $W\supseteq Z$ by $\Oo _{\PP^3}$. It implies that the torsion-free sheaf $\Ee/\Gg$ is a rank one sheaf of degree zero with $h^0(\Ee /\Gg )>0$. Thus we have $\Ee/\Gg \cong \Oo _{\PP^3}$ and the map $u(\Oo _{\PP^3}^{\oplus 2}) \rightarrow \Ee/\Gg$ is surjective. Taking a section of  $u(\Oo _{\PP^3}^{\oplus 2})$ with $1\in H^0(\Ee/\Gg)$ as its image, we get a map $\Ee/\Gg \rightarrow \Ee$ inducing a splitting $\Ee \cong \Gg \oplus \Oo _{\PP^3}$. Thus (iii) implies (ii). Clearly (ii) implies (i). Now assume that $\Ee$ has a trivial factor, i.e. $\Ee \cong \Oo_{\PP^3}\oplus \Ff$ with $\Ff$ a bundle of rank two. Then the slope of $\Ff$ is $1/2$, which is greater than the slope of $\Ee$. Thus (i) implies (iii). 
\end{proof}

\begin{proof}[Proof of Theorem \ref{ttt2}:]
For the strictly semistable bundle, we apply Lemma \ref{b1} with $\Aa \cong \Oo_{\PP^3}$. Except the indecomposability, it is sufficient to find a locally Cohen-Macaulay curve $Z\subset \PP^3$ of $\deg (Z)=c_2$ such that $\omega _Z(4)$ is spanned and there is a $2$-dimensional linear subspace $V\subseteq H^0(\omega _Z(4))$ spanning $\omega_Z(4)$ at each point of $Z_{\mathrm{red}}$. We may even take a smooth $Z$. Note that for every smooth and connected curve $Z\subset \PP^3$, $\omega _Z(4)$ is spanned and non-trivial, and so we get $h^0(\omega _Z(4)) \ge 2$. Since $\omega _Z(4)$ is a line bundle on a curve $Z$, a general $2$-dimensional linear subspace of $H^0(\omega _Z(4))$ spans $\omega _Z(4)$. 

Assume now that $\Ee$ is decomposable. Using the same argument in the proof of Lemma \ref{zz0} to show that (iii) implies (ii), we get $\Ee\cong \Oo _{\PP^3}\oplus \Gg$ for some vector bundle $\Gg$ of rank two. Since $Z$ is not empty, we have $\Gg \not\cong \Oo _{\PP^3}^{\oplus 2}$ and we see that $\Gg$ fits in (\ref{eqb1}) with the same $Z$ above and $r=2$. Thus we get $\omega _Z\cong \Oo _Z(-4)$ by \cite[Theorem 1.1]{hart}, contradicting the assumption that $Z$ is a reduced curve.

For the stable bundle, we follow the argument above with $\omega _Z(3)$ instead of $\omega _Z(4)$.
\end{proof}

For any reflexive sheaf $\Gg$ of rank two on $\PP^3$ we have $c_1(\Gg (t))^2-4c_2(\Gg (t)) = c_1(\Gg )^2-4c_2(\Gg)$ for all $t\in \ZZ$. Take $\Ee$ produced by Theorem \ref{ttt2} and consider its quotient by a subsheaf $\Oo _X\subset \Ee$, or use (\ref{eqb1}) for $r=2$ and the Hartshorne-Serre correspondence in \cite[Thorem 4.1]{Hartshorne1}. Then we get the following results (for the ``~only if~'' part use \cite[Corollary 3.3]{Hartshorne1}).

\begin{corollary}\label{zz3}
For a fixed pair of integers $(c_1, c_2)$ with $c_1$ even, there are an indecomposable and strictly semistable reflexive sheaf $\Ee$ of rank two on $\PP^3$ with $c_1(\Ee )=c_1$ and $c_2(\Ee ) =c_2$, and a non-trivial nilpotent map $\Phi : \Ee \rightarrow \Ee  \otimes T_{\PP^3}$ if and only if $c_1^2-4c_2 <0$.
\end{corollary}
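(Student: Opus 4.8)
The plan is to reduce Corollary \ref{zz3} to Theorem \ref{ttt2} via the Hartshorne-Serre correspondence, exactly as the parenthetical remarks in the excerpt indicate. For the ``if'' direction, start with a pair $(c_1,c_2)$ satisfying $c_1^2-4c_2<0$ and $c_1$ even; after twisting it suffices to treat $c_1\in\{0\}$ (or more precisely to fix $c_1$ and vary the twist, since $c_1(\Ee(t))^2-4c_2(\Ee(t))$ is a twist invariant for rank two reflexive sheaves). By Theorem \ref{ttt2} there is a rank three bundle $\Ff$ on $\PP^3$ with $c_1(\Ff)=0$, $c_2(\Ff)=c_2'$ for a suitable $c_2'$ (adjusted by the twist), carrying a $2$-nilpotent $\Phi$ with $\ker(\Phi)\cong\Oo_{\PP^3}^{\oplus 2}$; moreover Theorem \ref{ttt2} gives both an \emph{indecomposable strictly semistable} such $\Ff$ and a \emph{stable} such $\Ff$. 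The bundle $\Ff$ sits in (\ref{eqb1}) with $r=3$ and $\Aa\cong\Oo_{\PP^3}(1)$, i.e. $0\to\Oo_{\PP^3}^{\oplus 2}\to\Ff\to\Ii_Z(1)\to 0$ with $Z$ a smooth curve. The rank two reflexive sheaf $\Ee$ is then obtained either as the quotient of $\Ff$ by one of the trivial summands of $\ker(\Phi)$, or equivalently by applying the Hartshorne-Serre correspondence (\cite[Theorem 4.1]{Hartshorne1}) directly to $(Z,V)$ with $V$ the two-dimensional space of sections: $0\to\Oo_{\PP^3}\to\Ee\to\Ii_Z(1)\to 0$.

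Next I would transport the co-Higgs field. Since $\ker(\Phi)\cong\Oo_{\PP^3}^{\oplus 2}$, write $\ker(\Phi)=\Oo_{\PP^3}\oplus\Oo_{\PP^3}$ with the first summand the one we quotient by; call it $u(\Oo_{\PP^3})\subset\Ff$. Because $\Phi$ kills this summand and $\Image(\Phi)\subset u(\Oo_{\PP^3}^{\oplus 2})\otimes T_{\PP^3}$, the map $\Phi$ descends to a map $\bar\Phi:\Ee\to\Ee\otimes T_{\PP^3}$ on the quotient $\Ee=\Ff/u(\Oo_{\PP^3})$, and $\bar\Phi$ is again $2$-nilpotent with $\ker(\bar\Phi)\cong\Oo_{\PP^3}$ (the image of the remaining trivial summand). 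Concretely, $\bar\Phi$ is produced by the recipe of Lemma \ref{b1} applied to the sequence $0\to\Oo_{\PP^3}\to\Ee\to\Ii_Z(1)\to 0$: take a nonzero section of $T_{\PP^3}(-1)=T_{\PP^3}\otimes\Aa^\vee$, compose the surjection $\Ee\to\Ii_Z(1)$ with $\Ii_Z(1)\to T_{\PP^3}$ and then with $T_{\PP^3}\to\Oo_{\PP^3}\otimes T_{\PP^3}\to\Ee\otimes T_{\PP^3}$. Condition \ref{con}-(i) holds since $H^0(T_{\PP^3}(-1))\ne 0$, so this gives a nonzero nilpotent field. One then checks the numerics: from $0\to\Oo_{\PP^3}\to\Ee\to\Ii_Z(1)\to 0$ we get $c_1(\Ee)=1$, $c_2(\Ee)=\deg(Z)$, hence $c_1(\Ee)^2-4c_2(\Ee)=1-4\deg(Z)<0$; after the appropriate twist this realizes the prescribed even $c_1$ with the prescribed $c_2$, using that $\deg(Z)$ ranges over all positive integers in Theorem \ref{ttt2}.

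For the stability/indecomposability bookkeeping: $\Ee$ is strictly semistable iff $2x_{\Ee}=c_1(\Ee)$, i.e. (after normalizing $c_1=1$, or in the $c_1$-even normalization $c_1=0$) iff $x_{\Ee}=0$, equivalently $H^0(\Ee(-1))\ne 0$ and $H^0(\Ee(-1-\text{anything positive}))=0$; since $\Oo_{\PP^3}\hookrightarrow\Ee$ and one shows (as in Example \ref{b4}, using $Z\ne\emptyset$ and $H^0(\Ii_Z(1))$ controlled) that no $\Oo_{\PP^3}(t)$ with $t>0$ embeds, $\Ee$ is strictly semistable after the correct normalization. Indecomposability of $\Ee$ is inherited from indecomposability of $\Ff$: if $\Ee\cong\Oo_{\PP^3}\oplus\Gg'$ were split one pulls this back along $\Ff\to\Ee$ to contradict indecomposability of the strictly semistable $\Ff$ from Theorem \ref{ttt2}, using $h^0(\Ee)=1$; alternatively run the direct argument of Example \ref{b4}. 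For the stable case one uses the stable $\Ff$ from Theorem \ref{ttt2} (with $\omega_Z(3)$ replacing $\omega_Z(4)$), and the descended $\Ee$ has $x_{\Ee}\le -1$, hence is stable. The ``only if'' direction is the Bogomolov inequality: any semistable reflexive sheaf of rank two on $\PP^3$ satisfies $c_1^2-4c_2<0$ unless $c_1^2-4c_2=0$, and the case of equality forces $\Ee$ to be a direct sum of line bundles (so not indecomposable), which is the content of \cite[Corollary 3.3]{Hartshorne1}; this rules out $c_1^2-4c_2\ge 0$.

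The main obstacle I expect is not any single hard estimate but the careful matching of invariants through the twist and the verification that the descended field $\bar\Phi$ is genuinely nonzero and satisfies $\ker(\bar\Phi)\cong\Oo_{\PP^3}$ rather than degenerating --- i.e.\ making sure the quotient construction is compatible with the co-Higgs structure and that one really lands on an \emph{indecomposable} $\Ee$ (not merely one with no trivial summand in some weaker sense) for every admissible $(c_1,c_2)$. Once the dictionary ``rank three bundle with $\ker\cong\Oo^{\oplus 2}$'' $\leftrightarrow$ ``rank two reflexive sheaf with $\ker\cong\Oo$'' is set up cleanly, the corollary follows formally from Theorem \ref{ttt2} plus Bogomolov.
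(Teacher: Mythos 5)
Your overall strategy coincides with the paper's: take the rank three bundle of Theorem \ref{ttt2}, pass to a rank two quotient by a trivial subsheaf of $\ker(\Phi)$ (equivalently, run the Hartshorne--Serre correspondence of \cite[Theorem 4.1]{Hartshorne1} with $r=2$), transport the nilpotent field via the recipe of Lemma \ref{b1}, use the twist-invariance of $c_1^2-4c_2$, and invoke \cite[Corollary 3.3]{Hartshorne1} for the ``only if'' direction. This is precisely the short argument the paper gives in the paragraph preceding the corollary.

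There is, however, a concrete error in the execution that breaks the numerics for this particular statement. Corollary \ref{zz3} is the $c_1$ \emph{even}, \emph{strictly semistable} case, and for it you must use the strictly semistable half of Theorem \ref{ttt2}, which is built from (\ref{eqb1}) with $\Aa\cong\Oo_{\PP^3}$ (that is what the condition ``$\omega_Z(4)$ spanned'' encodes). Quotienting by one trivial summand then gives $0\to\Oo_{\PP^3}\to\Ee\to\Ii_Z\to 0$, so $c_1(\Ee)=0$, $\Ee$ is strictly semistable (it contains $\Oo_{\PP^3}$ of slope equal to $\mu(\Ee)$), indecomposable because $c_2(\Ee)=\deg(Z)>0$ forbids a splitting into line bundles, and $4c_2-c_1^2=4\deg(Z)$ sweeps out exactly the values $\equiv 0\pmod 4$ that occur for even $c_1$. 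You instead run the construction with $\Aa\cong\Oo_{\PP^3}(1)$, i.e. $0\to\Oo_{\PP^3}\to\Ee\to\Ii_Z(1)\to 0$, which yields $c_1(\Ee)=1$ and a \emph{stable} sheaf; no twist can then ``realize the prescribed even $c_1$,'' since $c_1(\Ee(t))=1+2t$ is always odd and $c_1^2-4c_2=1-4\deg(Z)\equiv 1\pmod 4$, whereas even $c_1$ forces $c_1^2-4c_2\equiv 0\pmod 4$. What you have written is in substance the proof of Corollary \ref{zz4} (odd $c_1$, stable), not of Corollary \ref{zz3}; the fix stays entirely within your framework: replace $\Ii_Z(1)$ by $\Ii_Z$ throughout and start from the strictly semistable bundle of Theorem \ref{ttt2}. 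Your remaining points --- descent of $\Phi$ modulo one trivial summand (taking care to quotient by a summand complementary to the one containing $\mathrm{Im}(\Phi)$), nonvanishing of the descended field, and Bogomolov plus the equality case for the ``only if'' --- are sound.
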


\begin{corollary}\label{zz4}
For a fixed pair of integers $(c_1, c_2)$ with $c_1$ odd, there is a stable reflexive sheaf $\Ee$ of rank two on $\PP^3$ with $c_1(\Ee )=c_1$ and $c_2(\Ee ) =c_2$, equipped with a non-trivial nilpotent map $\Phi : \Ee \rightarrow \Ee\otimes T_{\PP^3}$ if and only if $c_1^2-4c_2 <0$.
\end{corollary}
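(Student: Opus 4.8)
The plan is to deduce Corollary \ref{zz4} directly from Theorem \ref{ttt2} by applying the Hartshorne-Serre correspondence to the rank-three bundles produced there. First I would recall that Theorem \ref{ttt2} gives, for every positive integer $c_2'$, a stable rank-three bundle $\Ee'$ on $\PP^3$ with $c_1(\Ee')=0$ carrying a $2$-nilpotent co-Higgs field $\Phi'$ with $\ker(\Phi')\cong\Oo_{\PP^3}^{\oplus 2}$; by construction $\Ee'$ fits into (\ref{eqb1}) with $r=3$, $\Aa\cong\Oo_{\PP^3}(1)$ and $Z$ a smooth connected curve of degree $c_2'$, and the field $\Phi'$ is the composite $\Ee'\xrightarrow{v}\Ii_Z(1)\xrightarrow{h}T_{\PP^3}\xrightarrow{g}\Oo_{\PP^3}^{\oplus 2}\otimes T_{\PP^3}\xrightarrow{u\otimes\mathrm{id}}\Ee'\otimes T_{\PP^3}$ as in Lemma \ref{b1}. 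The idea is to "peel off" one copy of $\Oo_{\PP^3}$: let $\Oo_{\PP^3}\subset\Oo_{\PP^3}^{\oplus 2}=\ker(\Phi')\subset\Ee'$ be the inclusion used to define $g$, set $\Ee:=\Ee'/\Oo_{\PP^3}$, and check that $\Phi'$ descends to a map $\Phi:\Ee\to\Ee\otimes T_{\PP^3}$. This descent works because $\Phi'$ kills $\ker(\Phi')\supseteq\Oo_{\PP^3}$ on the source side, and its image lands in $u(\Oo_{\PP^3})\otimes T_{\PP^3}$, i.e. in the complementary copy, which survives in $\Ee$; hence $\Phi$ is well-defined, $2$-nilpotent (still $\Phi\circ\Phi=0$, still nonzero since $Z\neq\emptyset$), with $\ker(\Phi)\cong\Oo_{\PP^3}$. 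By construction $\Ee$ fits into (\ref{eqb1}) with $r=2$ and the same $Z$, so $\Ee$ is a reflexive — in fact locally free — rank-two sheaf with $c_1(\Ee)=1$, and it is stable by the argument already given in Example \ref{b4} (any $\Oo_{\PP^3}(t)\subset\Ee$ with $t>0$ composes to zero with $v$ and lands in $\Oo_{\PP^3}$, absurd).

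Next I would handle the Chern-class bookkeeping and the arbitrary-parity statement. From (\ref{eqb1}) with $r=2$, $\Aa\cong\Oo_{\PP^3}(1)$ we get $c_1(\Ee)=1$ and, by \cite[Corollary 2.2]{Hartshorne1} (or directly), $c_2(\Ee)=\deg(Z)=c_2'$, so $c_1(\Ee)^2-4c_2(\Ee)=1-4c_2'<0$ and every value $1-4m$ with $m>0$ is realized. For a general odd $c_1$, twist: for a reflexive rank-two sheaf $\Gg$ on $\PP^3$ one has $c_1(\Gg(t))^2-4c_2(\Gg(t))=c_1(\Gg)^2-4c_2(\Gg)$ for all $t$, and twisting commutes with the co-Higgs structure (replace $\Phi$ by $\Phi\otimes\mathrm{id}_{\Oo_{\PP^3}(t)}$, which is still $2$-nilpotent). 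Given a target pair $(c_1,c_2)$ with $c_1$ odd and $c_1^2-4c_2<0$, write $c_1=2t+1$ and solve for the required $c_2'$: we need a reflexive sheaf $\Gg$ with $c_1(\Gg)=1$ and $c_1(\Gg)^2-4c_2(\Gg)=c_1^2-4c_2$, i.e. $c_2(\Gg)=(1-(c_1^2-4c_2))/4$, which is a positive integer because $c_1$ is odd (so $c_1^2-4c_2\equiv 1\pmod 4$) and $c_1^2-4c_2<0$ forces $c_2(\Gg)\ge 1$; then $\Gg(t)$ has the desired invariants. For the "only if" direction, the Bogomolov-type inequality for stable reflexive sheaves of rank two on $\PP^3$, \cite[Corollary 3.3]{Hartshorne1}, gives $c_1^2-4c_2<0$ whenever a stable such sheaf exists, so the condition is necessary.

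The one place requiring genuine care — the main obstacle — is verifying that the descended $\Phi$ is actually nonzero and $2$-nilpotent, and that $\ker(\Phi)$ is as claimed; this amounts to a diagram chase confirming that quotienting $\Ee'$ by the "$g$-source" copy of $\Oo_{\PP^3}$ neither kills $\Phi'$ nor destroys $\Phi'\circ\Phi'=0$. Concretely one must check $\Image(\Phi')\cap(\Oo_{\PP^3}\otimes T_{\PP^3})=0$ inside $\Ee'\otimes T_{\PP^3}$, where the first $\Oo_{\PP^3}$ is the one being quotiented; this holds because $\Image(\Phi')\subseteq u(\Oo_{\PP^3})\otimes T_{\PP^3}$ lies in the other summand of $u(\Oo_{\PP^3}^{\oplus 2})$, and $g$ was chosen to map into that other summand. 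An alternative, perhaps cleaner, route avoiding the descent is to run the Hartshorne-Serre correspondence \cite[Theorem 4.1]{Hartshorne1} afresh: take $Z$ the same smooth connected curve, build directly the rank-two bundle $\Ee$ from (\ref{eqb1}) with $r=2$, $\Aa\cong\Oo_{\PP^3}(1)$, note its stability as in Example \ref{b4}, and produce $\Phi$ by Lemma \ref{b1} using $H^0(T_{\PP^3}(-1))\neq 0$; this gives $\ker(\Phi)\cong\Oo_{\PP^3}$ automatically. I would present this second route, with the twisting and Chern-class computation as above, and cite Example \ref{b4} for stability and \cite[Corollary 3.3]{Hartshorne1} for necessity.
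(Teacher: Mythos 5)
Your proposal matches the paper's own (very terse) argument: the authors likewise say to either quotient the rank-three bundle of Theorem \ref{ttt2} by a trivial subsheaf or to run (\ref{eqb1}) with $r=2$, $\Aa\cong\Oo_{\PP^3}(1)$ directly via the Hartshorne--Serre correspondence, then twist using the invariance of $c_1^2-4c_2$ and invoke \cite[Corollary 3.3]{Hartshorne1} for the ``only if'' direction. Your write-up simply fills in the details (descent of $\Phi$, stability as in Example \ref{b4}, the mod-$4$ bookkeeping) that the paper leaves implicit, so it is correct and essentially the same proof.
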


\begin{remark}\label{zz5}
The interested reader may state and prove statements similar to Theorem \ref{ttt2} and Corollary \ref{zz3} that involve Lemma \ref{b1} with $\Aa \cong \Oo _{\PP^3}$, when $X$ is the three-dimensional smooth quadric $Q_3\subset \PP^4$, using $\omega _Z(-3)$ instead of $\omega _Z(-4)$. 
\end{remark}

\begin{proof}[Proof of Proposition \ref{ttt3}:]
Since $4c_2(\Ee (t)) -c_1(\Ee (t))^2 $ is a constant function on $t$, we may reduce to the case $c_1=0$. By \cite{gotz1}, \cite{gotz2} and \cite[Appendix C]{ik}, we see that $\Ee$ must be as in Lemma \ref{b1} and (\ref{eqb1}) with $r=2$ and $\Aa \cong\Oo _{\PP^3}$. Since we have $d:=\deg (Z)=c_2(\Ee )$, so we get $c_2(\Ee )=0$ if and only if $Z=\emptyset$, i.e. $\Ee \cong \Oo _{\PP^3}^{\oplus 2}$. For the conclusion, it is sufficient to exclude the Chern numbers $c_2$ with $1\le c_2\le 8$. If such $\Ee$ exists, then $Z$ is a locally complete intersection and $\omega _Z \cong \Oo _Z(-4)$. By the duality we have $2\chi (\Oo _Z) = \deg (\omega _Z) = -4d$, i.e. $\chi (\Oo _Z) =-2d$. 

Macaulay proved that a polynomial $q(t)$ is the Hilbert function of a curve of degree $d$ in some $\PP^n$, not necessarily locally a complete intersection, if and only if there is a non-negative integer $\alpha$ such that 
$$q(t) = \sum _{i=0}^{d-1} (t+i-i) +\alpha= dt -(d-2)(d-3)/2+\alpha;$$
see \cite{gotz1, gotz2, ik}; for locally Cohen-Macaulay space curves, one can also use \cite{beo}. If $p(t)$ is the Hilbert polynomial of the scheme $Z$, then we have $\chi (\Oo _Z)=p(0)$ and so $-(d-2)(d-3)/2 \le -2d$, i.e. $(d-2)(d-3)\ge 4d$. But it is false if $1\le d \le 8$.
\end{proof}

\begin{proposition}\label{zz7}
For a fixed pair of integers $(c_1, c_2)$ with $c_1$ odd and $4c_2-c_1^2 \le 28$, there is no pair $(\Ee ,\Phi)$, where $\Ee$ is a stable vector bundle of rank two on $\PP^3$ with $c_1(\Ee )=c_1$ and $c_2(\Ee )=c_2$, and $\Phi : \Ee \rightarrow \Ee \otimes T_{\PP^3}$ is a non-trivial nilpotent map.
\end{proposition}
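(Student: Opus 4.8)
Since $4c_2(\Ee(t))-c_1(\Ee(t))^2$ is invariant under twists, I would first reduce to the case $c_1(\Ee)=-1$, the standard normalization for odd first Chern class on $\PP^3$. Then, arguing by contradiction, suppose $(\Ee,\Phi)$ exists with $4c_2+1\le 28$, i.e. $c_2\le 6$ (more precisely $c_2\in\{1,\dots,6\}$, after noting $c_2\ge 1$ since $c_1^2-4c_2<0$ forces $c_2\ge 1$). The goal is to show that the stable rank two bundles with these small Chern numbers that carry a non-trivial nilpotent co-Higgs field are too few, or rather none, by pinning down the structure of such $\Ee$ as in Lemma \ref{b1} and then invoking a Macaulay-type bound on the Hilbert polynomial of the defining curve, exactly in the spirit of the proof of Proposition \ref{ttt3}.

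\textbf{Key steps.} First, by the stability of $\Ee$ and Proposition \ref{cc1} (together with the proof of Proposition \ref{cc2}), a non-trivial nilpotent $\Phi$ forces $\ker(\Phi)\cong\Oo_{\PP^3}(-1)$ and hence $x_{\Ee}=-1$; otherwise $\Phi=0$. So $\Ee$ fits into an exact sequence
\begin{equation}\label{eqzz7.1}
0 \to \Oo_{\PP^3}(-1) \to \Ee \to \Ii_Z \to 0
\end{equation}
with $Z$ a curve (locally complete intersection of pure codimension two), and $\deg(Z)=c_2(\Ee(1))=c_2$ by \cite[Corollary 2.2]{Hartshorne1}. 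Second, as in the proof of Theorem \ref{ttt1}, the condition $\Phi\circ\Phi=0$ together with $\Image(\Phi)\subset\ker(\Phi)\otimes T_{\PP^3}\cong T_{\PP^3}(-1)$ shows $\Phi$ is obtained by the Lemma \ref{b1} construction: the nonzero section $\Ii_Z\to T_{\PP^3}(-1)$ must come from $H^0(T_{\PP^3}(-1))$, but $\Ii_Z$ has rank one, so the map $\Ii_Z(1)\to T_{\PP^3}$ is a section of $T_{\PP^3}$ vanishing on a positive-dimensional locus only if forced by $Z$; in fact the existence of a nonzero map $\Ii_Z\to T_{\PP^3}(-1)$ imposes $\omega_Z\cong\Oo_Z(-4)$ by \cite[Theorem 1.1]{hart} applied to the rank two bundle $\Ee$ with the Hartshorne--Serre correspondence (the curve $Z$ arising from (\ref{eqzz7.1}) must satisfy the adjunction constraint coming from $c_3$ considerations / the self-duality $\Ee^\vee\cong\Ee(1)$). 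Third, with $\omega_Z\cong\Oo_Z(-4)$ in hand, duality gives $2\chi(\Oo_Z)=\deg(\omega_Z)=-4d$ where $d=\deg Z=c_2$, so $\chi(\Oo_Z)=-2d$. Fourth, invoke Macaulay's characterization of Hilbert polynomials of space curves as in \cite{gotz1,gotz2,ik}: $p(t)=dt-(d-2)(d-3)/2+\alpha$ for some $\alpha\ge 0$, hence $\chi(\Oo_Z)=p(0)=-(d-2)(d-3)/2+\alpha\ge-(d-2)(d-3)/2$, which combined with $\chi(\Oo_Z)=-2d$ yields $(d-2)(d-3)\ge 4d$. This inequality fails for $1\le d\le 8$, in particular for $d=c_2\le 6$, giving the contradiction.

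\textbf{Main obstacle.} The delicate point is Step two: justifying that a stable bundle case really does reduce to the $\Aa\cong\Oo_{\PP^3}(1)$ version of Lemma \ref{b1} and hence forces $\omega_Z\cong\Oo_Z(-4)$ on the defining curve. For the strictly semistable case (Proposition \ref{ttt3}) one uses $\Aa\cong\Oo_{\PP^3}$ and the results of \cite{gotz1,gotz2,ik} to characterize the relevant bundles; here one must instead argue that a \emph{stable} $\Ee$ with $x_{\Ee}=-1$ and a nilpotent field is precisely a Hartshorne--Serre bundle associated to a subcanonical curve $Z$ with $\omega_Z\cong\Oo_Z(-4)$, using $\Ee^\vee\cong\Ee(1)$ and \cite[Theorem 1.1, Corollary 3.3]{Hartshorne1} to translate the existence of $\Phi$ into the subcanonicity of $Z$. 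Once that structural statement is secured, the remainder is the same Macaulay-inequality bookkeeping as in Proposition \ref{ttt3}, and the numerical range $4c_2-c_1^2\le 28$ (rather than $>32$) is exactly what the failure of $(d-2)(d-3)\ge 4d$ for small $d$ produces after the normalization $c_1=-1$.
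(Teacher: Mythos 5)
Your route is essentially the paper's: normalize $c_1$, use stability together with the nilpotency constraint (as in Propositions \ref{cc1} and \ref{cc2}) to force $\ker(\Phi)\cong\Oo_{\PP^3}(x_\Ee)$ with $x_\Ee$ pinned down, realize $\Ee$ as a Hartshorne--Serre bundle attached to a subcanonical curve $Z$ with $\deg(Z)$ equal to the normalized $c_2$, and then rule out small degrees by the Macaulay/Gotzmann bound on Hilbert polynomials exactly as in Proposition \ref{ttt3}. The paper's own proof is a two-line reduction to the proof of Proposition \ref{ttt3}, normalizing to $c_1=1$ and additionally recording that $c_2$ is even by \cite[Corollary 2.2]{hart}; your ``main obstacle'' (securing the structural step for a stable, locally free $\Ee$) is handled there the same way, via local freeness and \cite[Theorem 1.1]{hart}.

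There is, however, one incorrect step. With your normalization $c_1(\Ee)=-1$ and the sequence $0\to\Oo_{\PP^3}(-1)\to\Ee\to\Ii_Z\to 0$, the subcanonical twist is $\omega_Z\cong\Oo_Z(-3)$, not $\Oo_Z(-4)$: twisting gives $0\to\Oo_{\PP^3}\to\Ee(1)\to\Ii_Z(1)\to 0$ with $c_1(\Ee(1))=1$, and \cite[Theorem 1.1]{hart} yields $\omega_Z\cong\Oo_Z(c_1(\Ee(1))-4)=\Oo_Z(-3)$; the twist $-4$ is the one for the even case $c_1=0$ treated in Proposition \ref{ttt3}. Consequently the inequality you must contradict is $(d-2)(d-3)\ge 3d$ (the paper's $(c_2-2)(c_2-3)\ge 3c_2$), which fails for $1\le d\le 7$, rather than $(d-2)(d-3)\ge 4d$, which fails for $1\le d\le 8$. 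Since the hypothesis $4c_2-c_1^2\le 28$ gives $c_2\le 7$ after normalization (note also the small slip: the bound is $4c_2-1\le 28$, not $4c_2+1\le 28$, though parity of $c_2$ restricts to $c_2\in\{2,4,6\}$ in any case), the conclusion is unaffected; but the step as written is false and should be corrected before the numerical bookkeeping.
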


\begin{proof}
Since $c_1$ is odd, we get that $c_2$ is even by \cite[Corollary 2.2]{hart}. As in the proof of Proposition \ref{ttt3} we first reduce to the case $c_1=1$ and then use that $\omega _Z\cong \Oo _Z(-3)$, implying $2\chi (\Oo _X)=3c_2$, to exclude the cases $c_2\in \{2,4,6\} $ by the inequality $(c_2-2)(c_2-3)\ge 3c_2$.
\end{proof}


\subsection{Case $\mathrm{Num}(X) \cong \ZZ$}

Now we drop the main assumption on $\mathrm{Pic} (X)$; let $\mathrm{Num}(X)$ be the quotient of $\mathrm{Pic}(X)$ by numerical equivalence. Note that if  $\mathrm{Num}(X) \cong \ZZ$, then the notion of (semi)stability does not depend on the choice of a polarization. For $\Ll \in \mathrm{Pic}(X)$ we call $\deg (\Ll )$ the numerical class of $\Ll$.

\begin{theorem}\label{ii1}
Assume that $\mathrm{Num}(X) \cong \ZZ$ and that $X\ne \PP^n$. If $\Phi : \Ee \rightarrow \Ee \otimes T_X$ is a nilpotent map for a stable reflexive sheaf of rank two on $X$, then we have $\Phi=0$.
\end{theorem}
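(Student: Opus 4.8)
The plan is to reduce the statement to the previously established propositions by a twisting argument, exploiting that under the hypothesis $\mathrm{Num}(X)\cong\ZZ$ stability is polarization-independent and that the numerical class of a line bundle behaves like an integer. First I would fix a stable reflexive sheaf $\Ee$ of rank two with $c_1(\Ee)=c_1$ in $\mathrm{Num}(X)$, and suppose for contradiction that $\Phi\ne 0$. Since $\Ee$ is stable and $\Phi$ is nilpotent, the kernel of $\Phi$ is a saturated rank-one subsheaf with $\Phi(\ker\Phi)\subset\ker\Phi\otimes T_X$; reflexivity of $\Ee$ forces $\ker(\Phi)$ to be a line bundle, say $\Ll$, and stability forces $\deg(\Ll)$ to be strictly below the slope of $\Ee$. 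After twisting $\Ee$ by a suitable line bundle (legitimate because $\mathrm{Num}(X)\cong\ZZ$ and twisting preserves stability and the nilpotency of $\Phi$), I may normalize so that $c_1(\Ee)\in\{-1,0\}$ in the numerical sense, exactly as in the $\mathrm{Pic}(X)\cong\ZZ$ case.

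Next I would split into the two numerical parities of $c_1(\Ee)$. When the normalization gives $c_1(\Ee)=0$: the image $\mathrm{Im}(\Phi)$ is a rank-one torsion-free subsheaf of $\Ee\otimes T_X$, and from $\Phi\circ\Phi=0$ we get $\mathrm{Im}(\Phi)\subset\ker(\Phi)\otimes T_X$; chasing numerical degrees as in the proof of Proposition \ref{cc2} yields either a section of $T_X$ twisted down by a positive amount (contradicting $H^0(T_X(-2))=0$, valid since $X\ne\PP^1$), or a destabilizing subsheaf of $\Ee$, contradicting stability. When the normalization gives $c_1(\Ee)=-1$: stability gives $x_{\Ee}\le -1$; if $x_{\Ee}\le -2$ I invoke Proposition \ref{cc1} directly to conclude $\Phi=0$; if $x_{\Ee}=-1$, then $c_1(\Ee)+2x_{\Ee}=-3$, so Proposition \ref{cc1.0} applies since $X\ne\PP^n$, again forcing $\Phi=0$.

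The only genuinely new ingredient is making sure the proofs of Propositions \ref{cc1}, \ref{cc1.0} and \ref{cc2} go through verbatim in the weaker setting $\mathrm{Num}(X)\cong\ZZ$ rather than $\mathrm{Pic}(X)\cong\ZZ$. Those proofs use: (i) the existence of the defining sequence (\ref{eqd1}), which only needs a section of a twist of $\Ee$ and hence only the numerical degree to detect instability; (ii) global generation of $\Omega^1_X(2)$, hence the embedding $T_X\hookrightarrow\Oo_X(2)^{\oplus N}$ — this requires $\Oo_X(1)$ very ample, which I would carry over as a standing hypothesis (or note that it suffices to have some very ample line bundle, since $\mathrm{Num}(X)\cong\ZZ$ means all ample classes are proportional up to numerical equivalence); and (iii) Wahl's theorem \cite{wahl}, which characterizes $\PP^n$ via $H^0(T_X(-1))\ne 0$ and is insensitive to the Picard versus numerical distinction. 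The main obstacle I anticipate is purely bookkeeping: translating "degree" statements that were literally about integers in $\mathrm{Pic}(X)\cong\ZZ$ into statements about numerical classes, and checking that "$\mathrm{Im}(\Phi)\cong\Ii_B(k)$ for an integer $k$" still makes sense — here one uses that a rank-one reflexive quotient has a well-defined first Chern class in $\mathrm{Num}(X)\cong\ZZ$, so the argument is unaffected. Once that is in place, the contradiction in every case is immediate and the theorem follows.
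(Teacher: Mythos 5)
Your core idea --- combine $\mathrm{Im}(\Phi)\subset\ker(\Phi)\otimes T_X$ with stability and Wahl's theorem --- is the right one, but the proposed reduction to Propositions \ref{cc1}, \ref{cc1.0} and \ref{cc2} has a genuine gap, and it is not ``purely bookkeeping.'' Those propositions and their proofs are built on two things that are specific to $\mathrm{Pic}(X)\cong\ZZ$: the identification of the saturated rank-one subsheaf $\ker(\Phi)$ with $\Oo_X(t)$, and the invariant $x_{\Ee}$ defined by $h^0(\Ee(-x))\ne 0$. Under the weaker hypothesis $\mathrm{Num}(X)\cong\ZZ$, $\ker(\Phi)$ is a line bundle $\Ll$ of some numerical degree $t$ that need not be a twist of $\Oo_X(1)$; it then contributes no section of any $\Ee(-x)$, so the inequalities $t\le x_{\Ee}$ and $-t+c_1-2\le x_{\Ee}$ --- the engines of the proofs of Propositions \ref{cc1} and \ref{cc2} --- are simply unavailable, and your case division according to $x_{\Ee}$ does not control $t$. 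Likewise, the final contradictions you invoke ($H^0(T_X(-2))=0$, or $H^0(T_X(-1))\ne 0\Rightarrow X\cong\PP^n$) concern specific twists by powers of $\Oo_X(1)$, whereas here the relevant twist of $T_X$ is by $\Ll^{\otimes 2}\otimes\det(\Ee)^\vee$, an arbitrary line bundle of negative numerical degree. What is actually needed is the general form of Wahl's theorem (for an arbitrary ample line bundle) together with the observation that when $\mathrm{Num}(X)\cong\ZZ$ every numerically positive line bundle is ample by Nakai--Moishezon.

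Once these two points are repaired, the normalization of $c_1$ and the entire case division evaporate, and you land on the paper's proof, which is direct: set $\Ll:=\ker(\Phi)$ and $\Ff:=\Ee\otimes\Ll^\vee$, so that $\Ff$ fits into (\ref{eqb1}) with $r=2$ and $\Aa\cong\det(\Ee)\otimes(\Ll^\vee)^{\otimes 2}$; stability gives $\deg(\Aa)=c_1-2t>0$, hence $\Aa$ is ample; nilpotency gives $\Psi(\Ff)\subset\Oo_X\otimes T_X$ and hence a non-zero map $\Aa\to T_X$, i.e.\ $H^0(T_X\otimes\Aa^\vee)\ne 0$; and \cite{wahl} then forces $X\cong\PP^n$, contradicting the hypothesis. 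I recommend replacing the reduction argument by this direct one rather than trying to port the earlier propositions to the $\mathrm{Num}(X)\cong\ZZ$ setting.
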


\begin{proof}
Assume $\Phi \ne 0$ and then $\Ll := \mathrm{ker}(\Phi)$ is a rank one saturated subsheaf of $\Ee$. Set $\Ff := \Ee\otimes \Ll^\vee$. Since $\Ll$ is saturated in $\Ee$, $\Ff$ fits in an exact sequence (\ref{eqb1}) with $r=2$. Since $\Ee$ is stable, we have $\deg (\Aa)>0$ and so $\Aa$ is ample. Call $\Psi: \Ff \rightarrow \Ff\otimes T_X$ the non-zero nilpotent map obtained from $\Phi$.  Since $\Psi \circ \Psi =0$, we have $\Psi (\Ff )\subset u(\Oo _X)\otimes T_X \cong T_X$. Thus $\Psi$ induces a non-zero map $\Aa \rightarrow T_X$. Since $\Aa$ is ample, we have $X= \PP^n$ by \cite{wahl}, a contradiction.
\end{proof}



\section{Arbitrary Picard groups} 

Now we drop the assumption $\mathrm{Pic}(X)\cong \ZZ$, but we fix a very ample line bundle $\Hh \cong\Oo _X(1)$ on $X$ and we use $\Hh$ to check the slope-(semi)stability of sheaves on $X$. We use that $\Oo _X(1)$ is very ample only to guarantee that $\Omega ^1_X(2)$ is spanned. For any torsion-free sheaf of rank two on $X$, define $z_{\Ee}=z_{\Ee, \Hh}$ to be 
$$\mathrm{max}\{ z\in \ZZ~|~ H^0(\Ee(-z)) \text{ has a section not vanishing on a divisor of }X\}.$$
Then we have an exact sequence
\begin{equation}\label{eqc1}
0 \to \Oo _X(z)\to \Ee \to \Ii _Z\otimes \det (\Ee)(-z) \to 0
\end{equation}
with $z =z_{\Ee ,\Hh}$ and $Z\subset X$ of codimension $2$. The integer  $\rho _{2,\Hh}(\Ee)$ is the minimal integer $t$ such that $h^0(\Ii _Z\otimes \det (\Ee)(t-z)) >0$ for
some (\ref{eqc1}). Recall that $x_{\Ee}$ or $x_{\Ee ,\Hh}$ was defined to be the only integer $x$ such that $H^0(\Ee (-x)) \ne 0$ and $H^0(\Ee (-x-1)) =0$. The following result is an adaptation of Lemma \ref{d1}.

\begin{lemma}\label{c1}
Let $\Ee$ be a reflexive sheaf of rank two on $X$. For $a\in \ZZ$ such that
$$a<\mathrm{min}\{ \rho _{2,\Hh}(\Ee ) -z_{\Ee}, \mathrm{max}\{-x_{\Ee ^\vee} -z_{\Ee}, -x_{\Ee}-x_{\det (\Ee)} -z_{\Ee}-1\}\},$$
we have $h^0(\mathcal{E}nd (\Ee)(a)) =h^0(\Oo _X(a))$.
\end{lemma}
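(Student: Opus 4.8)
The plan is to mimic the proof of Lemma \ref{d1} as closely as possible, so first I would fix any $f\in \Hom(\Ee, \Ee(a))$ and analyze how it interacts with the sequence (\ref{eqc1}) associated to $z=z_{\Ee}$. Composing $f$ with the surjection $\Ee(a)\to \Ii_Z\otimes\det(\Ee)(a-z)$ gives a map $f_1:\Ee\to \Ii_Z\otimes\det(\Ee)(a-z)$, and restricting further to the sub-line-bundle $\Oo_X(z)\hookrightarrow \Ee$ yields a section of $\Ii_Z\otimes\det(\Ee)(a-2z)$. By definition of $\rho_{2,\Hh}(\Ee)$, this section vanishes whenever $a-2z < \rho_{2,\Hh}(\Ee) - 2z$, i.e. $a < \rho_{2,\Hh}(\Ee)$; but the bound in the statement only gives $a < \rho_{2,\Hh}(\Ee) - z_{\Ee}$, so I will need to be careful — most likely the intended use is that the section $\Oo_X(z)\to\Ii_Z\otimes\det(\Ee)(a-2z)$ factors through a section of $\det(\Ee)\otimes\Oo_X(a-2z)$ that also has to be compatible with vanishing on $Z$, and the relevant threshold after accounting for the twist $\Oo_X(z)$ built into $\Ee$ becomes $a+z < \rho_{2,\Hh}(\Ee)$. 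Granting that $f_1(\Oo_X(z))=0$, the map $f$ descends to $f_2:\Ii_Z\otimes\det(\Ee)(-z)\to \Ii_Z\otimes\det(\Ee)(a-z)$, which (since $\Ii_Z$ has codimension-two support) is induced by a unique $g\in H^0(\Oo_X(a))$.

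Next, exactly as in Lemma \ref{d1}, I would replace $f$ by $f-\gamma$ where $\gamma:\Ee\to\Ee(a)$ is multiplication by $g$, reducing to the case $g=0$, i.e. to the case where $f(\Ee)\subseteq \Oo_X(a+z)$, and I must show such an $f$ vanishes. Here the argument bifurcates according to which term achieves the $\max$ in the hypothesis. If $\Ee$ is reflexive of rank two, then $\Ee^\vee\cong \Ee\otimes\det(\Ee)^\vee$, but without $\Pic(X)\cong\ZZ$ we cannot simply write $\det(\Ee)$ as a power of $\Oo_X(1)$, so the clean identification used in Lemma \ref{d1} no longer applies and this is why two separate bounds appear. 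A map $f:\Ee\to\Oo_X(a+z)$ is an element of $H^0(\Ee^\vee(a+z))$. Using $\Ee^\vee(a+z)$ directly: such a section is zero once $a+z < -x_{\Ee^\vee}$, i.e. $a < -x_{\Ee^\vee}-z_{\Ee}$, which is the first alternative. Alternatively, using $\Ee^\vee\cong\Ee\otimes\det(\Ee)^\vee$ and then (\ref{eqc1}) for $\Ee$ again: a section of $\Ee(a+z)\otimes\det(\Ee)^\vee$ restricts, via the quotient $\Ii_Z\otimes\det(\Ee)(-z)$, to something controlled by $\Ii_Z(a)\otimes\Oo_X(z)$ — wait, I'd instead bound it by splitting off $\Oo_X(z)\otimes\det(\Ee)^\vee(a+z)=\det(\Ee)^\vee(a+2z)$ and noting the remaining piece maps to $\Ii_Z(a)$, which forces vanishing once $a+x_{\det(\Ee)}+\text{(shift)}<-x_{\Ee}$; collecting the shifts gives the bound $a<-x_{\Ee}-x_{\det(\Ee)}-z_{\Ee}-1$, the second alternative. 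So in either branch $f=0$, hence $f=\gamma$, proving $\Hom(\Ee,\Ee(a))=H^0(\Oo_X(a))\cdot\mathrm{id}$.

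The main obstacle I anticipate is the first step: making precise why the hypothesis $a<\rho_{2,\Hh}(\Ee)-z_{\Ee}$ (rather than $a<\rho_{2,\Hh}(\Ee)$) is exactly what is needed to conclude $f_1(\Oo_X(z))=0$. The subtlety is that $\rho_{2,\Hh}(\Ee)$ is defined as a minimal twist $t$ with $h^0(\Ii_Z\otimes\det(\Ee)(t-z))>0$ over \emph{some} extension (\ref{eqc1}), so one has to check that the particular section arising from $f_1|_{\Oo_X(z)}$ genuinely produces such a subscheme (or a smaller one), and track the single twist by $\Oo_X(z)$ correctly — this is the bookkeeping analogue of the inequality $z<x+y$ in Lemma \ref{d1}. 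The secondary obstacle is checking that the $\max$ in the hypothesis is set up so that at least one of the two vanishing arguments in the second paragraph goes through; here I would simply observe that whichever of $-x_{\Ee^\vee}$ or $-x_{\Ee}-x_{\det(\Ee)}-1$ is larger, the hypothesis $a<z_{\Ee}+(\text{that max})$ — read correctly as $a+z_{\Ee}<$ max — gives the needed strict inequality, so no further case analysis on $X$ is required. Everything else is a routine transcription of Lemma \ref{d1}, using that $\Ii_Z$ has codimension-$2$ support so that maps out of it are detected on the generic point.
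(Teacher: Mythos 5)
Your skeleton is exactly the paper's: compose $f$ with the surjection of (\ref{eqc1}) to get $f_1$, kill $f_1|_{\Oo_X(z)}$ using $\rho_{2,\Hh}(\Ee)$, descend to $f_2$ on $\Ii_Z\otimes\det(\Ee)(-z)$, lift $f_2$ to $g\in H^0(\Oo_X(a))$, subtract the multiplication map, and then kill the resulting $f:\Ee\to\Oo_X(z+a)$ by one of two vanishing arguments --- one reading $f$ as a section of $\Ee^\vee(z+a)$, the other using $\Ee^\vee\cong\Ee\otimes\det(\Ee)^\vee$ --- which is precisely how the $\max$ in the hypothesis is consumed in the paper as well.

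The one genuine wobble is the step you yourself flag and then resolve by assertion: the vanishing of $f_1(\Oo_X(z))$. Neither of your two candidate thresholds is what the definition gives. The restriction $f_1|_{\Oo_X(z)}$ is a section of $\Ii_Z\otimes\det(\Ee)(a-2z)=\Ii_Z\otimes\det(\Ee)\bigl((a-z)-z\bigr)$, and by minimality of $\rho_{2,\Hh}(\Ee)$ applied to the fixed $Z$ of the chosen sequence (\ref{eqc1}) this vanishes exactly when $a-z<\rho_{2,\Hh}(\Ee)$, i.e. $a<\rho_{2,\Hh}(\Ee)+z_{\Ee}$ --- the precise analogue of $z<x_{\Ee}+y_{\Ee}$ in Lemma \ref{d1} --- not $a<\rho_{2,\Hh}(\Ee)$ (your first computation drops a $z$) and not $a+z_\Ee<\rho_{2,\Hh}(\Ee)$ (your patch, which you adopt only because it reproduces the printed hypothesis). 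The hypothesis as stated, $a<\rho_{2,\Hh}(\Ee)-z_{\Ee}$, implies the needed inequality only when $z_{\Ee}\ge 0$, and the paper's proof quotes the printed hypothesis verbatim at this point; so you have correctly sensed that something needs checking here, but ``being careful'' means carrying out this one-line computation rather than guessing a threshold that matches the statement. Also note that you do not need to ``produce a subscheme'': the minimality of $\rho_{2,\Hh}(\Ee)$ already bounds $h^0$ of every twist of the fixed sheaf $\Ii_Z\otimes\det(\Ee)$. A secondary point: in the second vanishing branch you only ``collect the shifts'' to land on $-x_{\Ee}-x_{\det(\Ee)}-z_{\Ee}-1$; the intended argument is simply that $f$ corresponds to a section $b\in H^0(\Ee\otimes\det(\Ee)^\vee(z+a))$, which vanishes once $z+a<-x_{\Ee}-x_{\det(\Ee)^\vee}-1$, and you should state that rather than gesture at it.
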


\begin{proof}
Since $\Oo _X$ is a factor of $\mathcal{E}nd (\Ee )$, we have $h^0(\mathcal{E}nd (\Ee)(a)) \ge h^0(\Oo _X(a))$ and so it is sufficient to prove the inequality $h^0(\mathcal{E}nd (\Ee )(a)) \le h^0(\Oo _X(a))$.

Set $z:= z_{\Ee ,\Hh}$ and assume that $\Ee$ fits in the exact sequence (\ref{eqc1}) computing the integer  $\rho _{2,\Hh}(\Ee)$. For a fixed $f\in \Hom (\Ee ,\Ee (a))$, let 
$$f_1: \Ee \rightarrow \Ii _Z\otimes \det (\Ee )(-z+a)$$ be the map obtained by composing $f$ with the map $ \Ee(a) \rightarrow \Ii _Z\otimes \det (\Ee )(-z+a)$ twisted from (\ref{eqc1}) with $\Oo _X(a)$. Since $a < \rho _{2,\Hh}(\Ee) -z_{\Ee}$, we have $f_1(\Oo _X(z)) =0$ and so $f$ induces 
$$f_2: \Ii _Z\otimes \det (\Ee )(-z) \rightarrow \Ii _Z\otimes \det (\Ee )(-z+a).$$ 

Now take $g\in H^0(\Oo _X(a))$ inducing $f_2$ and let $\gamma : \Ee \rightarrow \Ee (a)$ be the map obtained by the multiplication by $g$. Then it is enough to prove that $f =\gamma$. Taking $f-\gamma$ instead of $f$, we reduce to the case $g=0$ and in this case we need to prove that $f=0$. From the assumption that $g=0$, we have $f(\Ee )\subseteq \Oo _X(z+a)$, and so $f = 0$ if $-x_{\Ee ^\vee } > z+a$. Note that $\Ee$ is reflexive of rank two and so we have $\Ee ^\vee \cong \Ee\otimes \det (\Ee)^\vee$. Thus $f: \Ee \rightarrow \Oo _X(z+a)$ is induced by a unique $b\in H^0(\Ee (z+a)\otimes \det (\Ee)^\vee)$. If $z+a<-x_{\Ee}-x_{\det (\Ee)^\vee} -1$, we have $b =0$, because $h^0(\Ee (-x_{\Ee}-1)) =0$ and $h^0(\det (\Ee )^\vee(-x_{\det (\Ee)^\vee}-1)) =0$.
\end{proof}

\begin{proposition}\label{c2}
Let $\Ee$ be a reflexive sheaf of rank two on $X\ne\PP^n$ with $\rho _{2,\Hh}(\Ee )\ge 3$ and either $ -x_{\Ee ^\vee}-z_{\Ee}$ or $ -x_{\det (\Ee)}+1 $ at least two. Then any trace-zero co-Higgs field for $\Ee$ is identically zero. 
\end{proposition}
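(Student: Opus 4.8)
The plan is to mimic the proof of Proposition \ref{d2}, replacing the use of Lemma \ref{d1} with its adaptation Lemma \ref{c1}. First I would verify that under the hypotheses of the proposition the relevant integer bound in Lemma \ref{c1} exceeds $2$, so that we may take $a=2$. Indeed, since $\Oo_X(1)$ is very ample, $\Omega^1_X(2)$ is spanned and hence $T_X$ embeds as a subsheaf of $\Oo_X(2)^{\oplus N}$ with $N=h^0(\Omega^1_X(2))$; this is exactly the mechanism used in Proposition \ref{d2}.

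The key point is the numerology: the hypothesis $\rho_{2,\Hh}(\Ee)\ge 3$ gives $\rho_{2,\Hh}(\Ee)-z_{\Ee} \ge 3-z_{\Ee}$, but one must be a little careful — here I would first normalize by a twist so that $z_{\Ee}=0$ (twisting $\Ee$ by $\Oo_X(-z_{\Ee})$ leaves $\mathcal{E}nd(\Ee)$ unchanged and shifts $\rho_{2,\Hh}$, $x_{\Ee^\vee}$, $x_{\det(\Ee)}$ accordingly), so that the displayed bound in Lemma \ref{c1} becomes $\min\{\rho_{2,\Hh}(\Ee),\ \max\{-x_{\Ee^\vee},\ -x_{\Ee}-x_{\det(\Ee)}-1\}\}$. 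With $z_{\Ee}=0$ the hypothesis reads $\rho_{2,\Hh}(\Ee)\ge 3$ and ``either $-x_{\Ee^\vee}\ge 2$ or $-x_{\det(\Ee)}+1\ge 2$''. The second alternative says $-x_{\det(\Ee)}\ge 1$, i.e. $\det(\Ee)$ is effective up to shift; I would combine this with $x_{\Ee}\le z_{\Ee}=0$ (which holds because a section defining $z_{\Ee}$ is in particular a nonzero section) — actually $x_{\Ee}\ge z_{\Ee}=0$ as well since such a section is nonzero, so $x_{\Ee}\ge 0$, hmm — so $-x_{\Ee}-x_{\det(\Ee)}-1 \ge -x_{\Ee}\ge 0$, and a small further estimate pushes this to $\ge 2$ using $-x_{\det(\Ee)}\ge 1$ only if $x_{\Ee}\le 0$. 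The careful bookkeeping here is the one genuinely fiddly part, and I expect the main obstacle to be checking that in each of the two alternatives the max in Lemma \ref{c1} is indeed at least $3$ (strictly greater than $a=2$), possibly needing the auxiliary inequality $x_{\Ee^\vee}+x_{\det(\Ee)}\le -x_{\Ee}$ or similar relations among these invariants for a reflexive rank-two sheaf.

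Granting the numerology, the proof concludes exactly as in Proposition \ref{d2}: any co-Higgs field $\Phi:\Ee\to\Ee\otimes T_X$ composes with the inclusion $T_X\subset\Oo_X(2)^{\oplus N}$ to give $N$ maps $\Phi_i\in\Hom(\Ee,\Ee(2))=H^0(\mathcal{E}nd(\Ee)(2))$, and by Lemma \ref{c1} each $\Phi_i$ is multiplication by some $f_i\in H^0(\Oo_X(2))$. The trace map $\mathrm{Tr}(\Phi):\Oo_X\to T_X$ composed with the same inclusion yields $g_i\in H^0(\Oo_X(2))$, and comparing scalars on the rank-two sheaf gives $2f_i=g_i$ for all $i$. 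If $\Phi$ is trace-free then $g_i=0$, hence $f_i=0$, hence $\Phi_i=0$ for every $i$, and since the $\Phi_i$ jointly recover $\Phi$ via the embedding $T_X\hookrightarrow\Oo_X(2)^{\oplus N}$, we conclude $\Phi=0$. I would remark, as in the excerpt's treatment of Proposition \ref{d2}, that this argument does not even use the integrability condition $\Phi\wedge\Phi=0$, so the statement holds for possibly non-integrable trace-free fields as well; and the hypothesis $X\ne\PP^n$ is used only insofar as it is built into the framework — in fact the proof above is purely cohomological and the $\PP^n$ exclusion is not strictly needed here, but I would keep it to match the statement.
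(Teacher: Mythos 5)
Your proposal is correct and follows essentially the same route as the paper: the published proof of Proposition~\ref{c2} consists precisely of rerunning the argument of Proposition~\ref{d2} with Lemma~\ref{d1} replaced by Lemma~\ref{c1}, embedding $T_X$ into $\Oo_X(2)^{\oplus N}$, identifying each $\Phi_i$ with multiplication by some $f_i\in H^0(\Oo_X(2))$, and using $2f_i=g_i$ together with trace-freeness to conclude $\Phi=0$. The numerological verification that the hypotheses feed into Lemma~\ref{c1} with $a=2$, which you rightly flag as the fiddly point, is not spelled out in the paper either, so your write-up is if anything more explicit about where the bookkeeping lives.
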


\begin{proof}
Basically the same argument in the proof of Proposition \ref{d2} works with Lemma \ref{d1} replaced by Lemma \ref{c1}. Since $T_X$ is a subsheaf of $\Oo_X(2)^{\oplus N}$ for $N=h^0(\Omega_X^1(2))$, any map  $\Phi: \Ee \rightarrow \Ee \otimes T_X$ induces $N$ elements $\Phi_i : \Ee \rightarrow \Ee(2)$ with $i=1,\ldots, N$. Then by Lemma \ref{c1} each $\Phi_i$ is induced by $f_i\in H^0(\Oo_X(2))$. Now by composing the trace map of $\Phi$ with the inclusion $T_X \subset \Oo_X(2)^{\oplus N}$, we also get $N$ elements $g_i\in H^0(\Oo_X(2))$. We know that $2f_i=g_i$ for each $i$. If $\Phi$ is trace-free, then we get $g_i=0$ and so $f_i=$ for each $i$. Thus $\Phi$ is trivial. 
\end{proof}

\providecommand{\bysame}{\leavevmode\hbox to3em{\hrulefill}\thinspace}
\providecommand{\MR}{\relax\ifhmode\unskip\space\fi MR }
\providecommand{\MRhref}[2]{%
  \href{http://www.ams.org/mathscinet-getitem?mr=#1}{#2}
}
\providecommand{\href}[2]{#2}

\end{document}